\documentclass[11pt,a4paper]{amsart}

\usepackage{amsmath,amssymb,amsthm,mathtools}
\usepackage[hidelinks]{hyperref}
\usepackage{mathrsfs}

\newtheorem{theorem}{Theorem}[section]
\newtheorem{proposition}[theorem]{Proposition}
\newtheorem{lemma}[theorem]{Lemma}
\newtheorem{corollary}[theorem]{Corollary}
\newtheorem*{maintheorem}{Theorem}
\theoremstyle{remark}
\newtheorem{remark}[theorem]{Remark}

\newcommand{\N}{\mathbb N}
\newcommand{\Z}{\mathbb Z}

\title[Sharpness of Bak--Sneppen avalanches]
{Sharpness of the avalanche phase transition in the Bak--Sneppen model}

\keywords{Bak--Sneppen model, avalanche ranges, nonlinear jump processes, critical thresholds, self-organized criticality}

\author{Olivier Hénard}
\address{Institut de mathématique d'Orsay, Université Paris-Saclay,
91405 Orsay, France}
\email{olivier.henard@universite-paris-saclay.fr}

\hypersetup{
  pdftitle={Sharpness of the avalanche phase transition in the Bak--Sneppen model},
  pdfauthor={Olivier Hénard}
}

\begin{document}

\begin{abstract}
We prove sharpness of the avalanche phase transition in the
one-dimensional Bak--Sneppen model, for both the one-sided and isotropic
update rules. In the one-sided model, the avalanche range is described by
a nonlinear self-interacting jump process whose tails satisfy a triangular
system. An extremal inequality for decreasing probability masses yields a
polynomial lower bound on the range tail at the susceptibility threshold;
this bound forces the appearance of infinite avalanches immediately above
criticality and hence identifies the susceptibility and infinite-avalanche
thresholds. A comparison of avalanche endpoints transfers this equality to
the isotropic model. In both models, no infinite avalanche occurs at
criticality, while the range has exponential moments at every strictly
subcritical level.  Combined with the range--duration and
stationary-limit results of Meester and Znamenski, this identifies all three
avalanche critical thresholds and proves the conjectured stationary product
limit in both models.
\end{abstract}

\maketitle

\section{Introduction}
\label{sec:introduction}

The Bak--Sneppen model was introduced in \cite{BS93} as a simple model of
self-organized criticality driven by extremal dynamics. On the cycle
$\Lambda_N:=\mathbb Z/N\mathbb Z$, each site carries a fitness, and at
each step the least fit site together with a prescribed set of its nearest
neighbours is refreshed by independent uniform random variables. 
We consider the one-sided rule, also called the anisotropic rule in the physics
literature, which refreshes the minimum site and its right neighbour, and the
isotropic rule, which refreshes the minimum site and both neighbours.

We work throughout in exponential coordinates, obtained from the uniform
fitnesses by the order-preserving transformation
$u\mapsto-\log(1-u)$. Thus refreshed fitnesses are independent
$\operatorname{Exp}(1)$ variables. Throughout, $t\geq0$ denotes a fitness
level in these coordinates, not the discrete time of the Bak--Sneppen
dynamics.
For $\gamma\in\{\mathrm{os},\mathrm{iso}\}$, where $\mathrm{os}$ and
$\mathrm{iso}$ stand for the one-sided and isotropic update rules,
respectively, let $\pi_N^\gamma$ denote the unique stationary law of the
corresponding finite chain, and let
\[
        F_N^\gamma
        =\bigl(F_{N,x}^\gamma\bigr)_{x\in\Lambda_N}
\]
have law $\pi_N^\gamma$.

The conjectured infinite-volume stationary picture is the emergence of a
deterministic fitness cutoff together with asymptotic independence for any
fixed collection of sites \cite{BS93,PMB96,MZ04}.
For $t\geq0$, let $\nu_t$ denote the exponential distribution conditioned
to lie above $t$, that is the law of $t+E$, where
$E\sim\operatorname{Exp}(1)$.

Following Meester and Znamenski \cite{MZ03,MZ04}, we study the stationary
problem through avalanches. The infinite-volume $t$-avalanche for update
rule $\gamma$ is obtained on $\mathbb Z$ by starting from a configuration
whose fitnesses all exceed $t$, forcing one update at the origin and its
neighbours prescribed by $\gamma$, and then running the usual dynamics
until all fitnesses again exceed $t$.
Let $\eta_\infty^\gamma(t)$ and
$r_\infty^\gamma(t)$ denote its duration and the number of sites in its
range, respectively, with both quantities set to $\infty$ if the avalanche
does not terminate. Define
\[
\begin{aligned}
        t_{\mathrm d}^\gamma
        &:=
        \sup\{t\geq0:
        \mathbb E[\eta_\infty^\gamma(t)]<\infty\},\\
        t_{\mathrm{sus}}^\gamma
        &:=
        \sup\{t\geq0:
        \mathbb E[r_\infty^\gamma(t)]<\infty\},\\
        t_\infty^\gamma
        &:=
        \inf\{t\geq0:
        \mathbb P(r_\infty^\gamma(t)=\infty)>0\}.
\end{aligned}
\]

The following statement collects the main conclusions of
Theorems~\ref{thm:range-criticality},
\ref{thm:isotropic-criticality}, and
\ref{thm:stationary-product-limits}.

\begin{maintheorem}[Stationary product limit and avalanche sharpness]
For each $\gamma\in\{\mathrm{os},\mathrm{iso}\}$,
\[
        t_{\mathrm d}^\gamma
        =t_{\mathrm{sus}}^\gamma
        =t_\infty^\gamma
        =:t_c^\gamma.
\]
The critical values satisfy
\[
        1\leq t_c^{\mathrm{os}}\leq2\log2,
        \qquad
        \frac13\leq t_c^{\mathrm{iso}}
        \leq t_c^{\mathrm{os}}.
\]
Moreover, for every fixed collection of distinct sites
$x_1,\ldots,x_\ell\in\mathbb Z$, viewed as elements of $\Lambda_N$ for sufficiently large $N$,
\[
        \bigl(
        F_{N,x_1}^\gamma,\ldots,F_{N,x_\ell}^\gamma
        \bigr)
        \xrightarrow[N\to\infty]{\mathrm d}
        \nu_{t_c^\gamma}^{\otimes\ell}.
\]
Equivalently, in the original uniform parametrisation, the limiting
fitnesses are independent and uniform on $[q_c^\gamma,1]$, where
$q_c^\gamma:=1-e^{-t_c^\gamma}$.
\end{maintheorem}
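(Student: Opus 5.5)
The main theorem collects three later results, so I will assemble it from them in the order the abstract indicates: first the one-sided sharpness, then the transfer to the isotropic model, then the stationary limit.

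\emph{One-sided model.} For $\gamma=\mathrm{os}$ I will describe the avalanche range as a nonlinear self-interacting jump process on $\mathbb Z_{\ge0}$. Its right endpoint moves by refreshing sites, and the fitnesses behind the front are conditioned below $t$. Writing $p_k(t):=\mathbb P(r_\infty^{\mathrm{os}}(t)>k)$, I expect that a decomposition at the first step produces a triangular system expressing $p_k$ in terms of $p_0,\dots,p_{k-1}$ and the exponential weights $e^{-t}$.

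The key analytic step is an extremal inequality for decreasing probability masses. Among all nonincreasing sequences compatible with the triangular system at level $t$, the tail is minimised by an explicit extremal profile. At $t=t_{\mathrm{sus}}^{\mathrm{os}}$ this yields a polynomial lower bound $p_k\ge c\,k^{-\alpha}$ with $\alpha\le1$. For $t$ slightly above $t_{\mathrm{sus}}$ I will use monotonicity of the avalanche in $t$: the tails at level $t$ dominate those at $t_{\mathrm{sus}}$, and each $k$ carries an extra factor $>1$. Feeding this into the triangular recursion should make the sequence fail to decrease to $0$, so that $\mathbb P(r_\infty^{\mathrm{os}}(t)=\infty)>0$. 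This gives $t_\infty^{\mathrm{os}}\le t_{\mathrm{sus}}^{\mathrm{os}}$; the reverse inequality is trivial.

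At criticality I will show $p_k\to0$ by continuity of the system from below. Strictly below criticality, a finite-$k$ contraction of the recursion gives geometric decay, and hence exponential moments. The equality $t_{\mathrm d}=t_{\mathrm{sus}}$ then follows from the Meester--Znamenski range--duration comparison, which gives $\mathbb E\eta<\infty$ if and only if $\mathbb E r<\infty$ given exponential moments of the range. The bounds $1\le t_c^{\mathrm{os}}\le2\log2$ should come from explicit sub- and supersolutions of the triangular system.

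\emph{Isotropic model.} I will couple the two models so that the endpoints of the isotropic avalanche are dominated by a one-sided avalanche, run on each side, at a possibly shifted level. This gives $t_c^{\mathrm{iso}}\le t_c^{\mathrm{os}}$. Sharpness transfers because an infinite isotropic avalanche already appears wherever $\mathbb E r^{\mathrm{iso}}=\infty$: a comparison of avalanche endpoints shows that an isotropic avalanche with infinite mean range contains the same extremal mechanism. The lower bound $1/3$ follows from a branching-process domination, since each update creates at most three sites below $t$, each with probability $1-e^{-t}$.

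\emph{Stationary limit and main obstacle.} Once $t_{\mathrm d}^\gamma=t_{\mathrm{sus}}^\gamma=t_\infty^\gamma$ is established, the stationary-limit theorem of Meester--Znamenski applies directly: they prove the product limit $\nu_{t_c}^{\otimes\ell}$ under exactly the hypothesis that these thresholds coincide. The main obstacle is the extremal inequality, namely proving the polynomial lower bound at $t_{\mathrm{sus}}$ uniformly over admissible decreasing profiles. My first attempt will be a rearrangement argument showing that the extremiser saturates every constraint of the triangular system in turn.
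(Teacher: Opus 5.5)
\paragraph{Gaps in the one-sided argument.}
Your outline follows the paper's architecture, but the steps that carry the theorem are misidentified or asserted without a mechanism.

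\emph{The tail system.} It is not a fixed-$t$ recursion. A first-step decomposition at fixed $t$ does not close, because the next updated site is the current minimum of the whole sub-threshold configuration. The continuation is therefore not a combination of independent $t$-avalanches. The closure lives in the level variable. By regeneration at a level (lack of memory of the terminal excesses over $t$), raising the threshold grafts independent avalanches at the current level, at rate $X_t+1$ and at uniform positions on $\{0,\ldots,X_t\}$. Hence $X_t\mapsto\max\{X_t,U+X_t'\}$, and $q_k(t)=\mathbb P(X_t\ge k)$ solves $q_k'=\sum_{u<k}q_u(q_{k-u}-q_k)+q_k(1-q_k)$. The bounds $1\le t_{\mathrm{sus}}^{\mathrm{os}}\le2\log2$ and $m(t)\uparrow\infty$ then come from the Riccati comparisons $m_n'\le m_n^2$ and $m'\ge\frac12m(m+1)$ for the mean.

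\emph{The exponent.} A lower bound $q_k(t_{\mathrm{sus}}^{\mathrm{os}})\ge ck^{-\alpha}$ with $\alpha\le1$ is worthless: $\alpha=1$ says no more than $\mathbb E X_{t_{\mathrm{sus}}^{\mathrm{os}}}=\infty$, which you already know. Everything hinges on $\alpha<1$. The extremal inequality delivering it is not an extremal tail profile. It is the moment bound $\mathbb E\max\{K,K'\}\ge\frac43\mathbb E K-\frac13$ for i.i.d.\ $K,K'$ with non-increasing mass function. Applied to the law $(q_k/m)_{k\ge1}$, it controls the loss term in the equation for $A=\sum_k kq_k$ and gives $A\le m^{8/3}$. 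Dropping that term gives only $A\le m^4$, i.e.\ $\alpha=2$. Paley--Zygmund then yields $\alpha=2/3$.

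\emph{The explosion mechanism.} ``An extra factor $>1$'' does not produce mass at infinity. The paper freezes the graft law at $Q_k:=q_k(t_{\mathrm{sus}}^{\mathrm{os}})$. While in $[x,2x)$, the frozen process jumps past $2x$ at rate at least $xQ_{2x}\gtrsim x^{1-\alpha}$. So the expected dyadic crossing times are summable exactly when $\alpha<1$, and the frozen process explodes within any positive level-time with positive probability. A triangular comparison, using that the true graft tails only grow beyond $t_{\mathrm{sus}}^{\mathrm{os}}$, gives $q_k(t_{\mathrm{sus}}^{\mathrm{os}}+s)\ge\mathbb P(Z_s\ge k)$. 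With $\alpha=1$ the crossing times are of order one, and this yields nothing.

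\paragraph{Gaps in the isotropic transfer.}
As written, your domination goes the wrong way. If the isotropic endpoints are dominated by one-sided avalanches, an infinite isotropic avalanche forces an infinite one-sided one, which bounds $t_c^{\mathrm{iso}}$ from below. What is needed is $X_t\preceq L_t$ for the isotropic left endpoint $L_t$. The paper obtains this without any pathwise coupling. The tails $h_k(t)=\mathbb P(L_t\ge k)$ form an integral supersolution of the one-sided tail system, since grafts rooted to the right of the origin can only help. The system admits a comparison principle, because its right-hand side is non-decreasing in $q_1,\ldots,q_{k-1}$ as long as these dominate $q_k$.

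Even so, this only gives $t_\infty^{\mathrm{iso}}\le t_c^{\mathrm{os}}$, which does not compare $t_\infty^{\mathrm{iso}}$ with $t_{\mathrm{sus}}^{\mathrm{iso}}$. Your claim that an infinite isotropic avalanche appears wherever $\mathbb E r^{\mathrm{iso}}=\infty$ restates the goal. It is in fact false at $t_c^{\mathrm{iso}}$ itself, where the mean range is infinite but the range is almost surely finite.

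The missing step is a restart. For $s<t_{\mathrm{sus}}^{\mathrm{iso}}$, run the one-sided tail flow from the law of $L_s\wedge K$, after redoing the one-sided analysis (compensation estimate, Paley--Zygmund, frozen explosion) for general finite-support initial data. The mean of this flow blows up within time $2\log(1+1/\mathbb E[L_s\wedge K])$. The flow has mass at infinity immediately afterwards, and it is dominated by the tails of $L_{s+\cdot}$. Moreover $\mathbb E[L_s]\to\infty$ as $s\uparrow t_{\mathrm{sus}}^{\mathrm{iso}}$: otherwise $M_K'\le M_K^2$ for $M_K(t)=\mathbb E[r_\infty^{\mathrm{iso}}(t)\wedge K]$ would continue the finite mean past $t_{\mathrm{sus}}^{\mathrm{iso}}$. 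Together these give $\mathbb P(L_t=\infty)>0$ for every $t>t_{\mathrm{sus}}^{\mathrm{iso}}$.

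\paragraph{What is sound.}
Counting sub-threshold sites gives $t_c^{\mathrm{iso}}\ge\log(3/2)\ge1/3$. The Meester--Znamenski reduction yields $t_{\mathrm d}^\gamma=t_{\mathrm{sus}}^\gamma$ and the product limit. It needs no exponential moments, since $\mathbb E\eta_\infty(t)=\exp\{\int_0^t\mathbb E r_\infty(s)\,ds\}$ whenever the mean range at level $t$ is finite.
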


The theorem closes, in the isotropic model, the programme initiated by
Meester and Znamenski. They first proved non-triviality of the large-system
stationary behaviour and developed a self-similar graphical representation
of avalanches \cite{MZ03}. They subsequently introduced the three
infinite-volume critical levels $t_{\mathrm d}^\gamma$, $t_{\mathrm{sus}}^\gamma$, 
        $t_\infty^\gamma$, proved equality of the first two and showed
that equality of all three implies the stationary product limit \cite{MZ04}. The
remaining problem was therefore a sharpness statement: divergence of the
mean range had to coincide with the first appearance of an infinite
avalanche.

Several related approaches and models were investigated while this
sharpness problem remained open. 
Maslov, De Los Rios, Marsili and Zhang 
obtained an exact equation for the range-size distribution in the anisotropic
one-dimensional model, from which they extracted scaling relations and
critical exponents \cite{MDRMZ98}.
Rigorous comparison
arguments later gave upper and lower bounds on avalanche critical values on
transitive graphs by means of branching-process and percolation bounds
\cite{GMN06}, while so-called
maximal avalanches were shown to have
infinite expected durations in \cite{GMW06}.
Threshold limits have also been established for more tractable rank-driven
models and related topology-free surrogates of Bak--Sneppen dynamics
\cite{GKW11,GKW12}. For the Bernoulli-valued discrete Bak--Sneppen model,
a rigorous upper bound on the transition parameter was obtained by
renewal-theoretic methods \cite{Volkov22}. 

The proof separates the tractable one-sided update rule from the genuinely
two-sided isotropic update rule. In the one-sided model the avalanche range
closes in a scalar nonlinear evolution. Its tails satisfy a triangular
system closely related to the spatial-size equation of \cite{MDRMZ98}. An extremal inequality for decreasing probability masses controls the
second tail moment by the first and yields
\[
        \mathbb P(X_{t_c^{\mathrm{os}}}\geq k)
        \geq ck^{-2/3}.
\]
Only the fact that the exponent is strictly smaller than one is needed:
when the critical graft law is frozen, the expected times required to cross
successive dyadic scales are summable, so the frozen process explodes with positive probability in
arbitrarily small positive time. 
A triangular comparison then gives
one-sided sharpness.

The isotropic range has no closed scalar evolution. A single endpoint is,
however, an integral supersolution of the one-sided tail system. Restarting
the one-sided tail flow from a truncated endpoint law transfers the
preceding explosion mechanism and proves isotropic sharpness without
requiring a closed equation for the total range. Generating-function estimates give
exponential moments for the range at every strictly subcritical level for both
update rules. The final section and Appendix~\ref{app:mz-reduction} implement
the Meester--Znamenski range--duration and locking-threshold reduction in a
common form. 

\subsection*{Acknowledgements}
\leavevmode\par\noindent
The author thanks Linglong Yuan and Nicolas Fournier for helpful early
discussions.

\subsection*{Disclosure on the use of AI}
\leavevmode\par\noindent
Generative AI tools, including OpenAI's ChatGPT (GPT-5.6 Sol), were used during the exploratory stages of this work and in the preparation of the manuscript. In particular, AI-assisted exploration suggested the
extremal inequality stated in Lemma~\ref{lem:decreasing-mass}.

\section{The one-sided range process}
\label{sec:avalanche-range}

We begin with the one-sided model, studied in \cite{MDRMZ98}, and use the exponential parametrisation. A fitness configuration is an element of $[0,\infty)^{\Z_+}$; at each discrete time, the site of minimum fitness and its right neighbour are refreshed by independent exponential random variables of mean one. A threshold $t$ in this parametrisation corresponds to $1-e^{-t}$ in the usual uniform parametrisation.

Fix $t\geq0$. Starting from a configuration whose fitnesses all exceed $t$, force one update of the sites $0$ and $1$, and thereafter update the site of minimum fitness and its right neighbour for as long as some fitness is at most $t$. The resulting procedure is the $t$-avalanche with origin $0$. Its duration $\eta(t)$ is the number of updates, including the forced update, and its range set $\xi(t)$ is the set of sites refreshed at least once. We write $r(t):=|\xi(t)|$, with the convention $r(t)=\infty$ when the range is infinite, and set $\eta(t)=\infty$ if the procedure never terminates. The law of $(\xi(t),\eta(t))$ does not depend on the initial values above $t$.

\begin{lemma}
\label{lem:range-interval}
For every $t\geq0$, almost surely,
\[
        \xi(t)=\{0,\ldots,r(t)-1\}
\]
when $r(t)<\infty$, whereas $\xi(t)=\mathbb Z_+$ when $r(t)=\infty$.
Moreover,
\[
        \{\eta(t)=\infty\}=\{r(t)=\infty\}
        \qquad\text{almost surely}.
\]
\end{lemma}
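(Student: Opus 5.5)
Two statements need proof: that the range is an initial interval, and that non-termination coincides with infinite range. We take them in that order.

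For the interval property, we argue by induction on the number of updates. The claim is that after $n$ updates (with $n<\eta(t)$, or for every $n$ if the avalanche never stops) the set of refreshed sites is $\{0,\dots,m_n\}$ for some $m_n\ge1$. Moreover, every site outside this set still has its initial fitness, which exceeds $t$.

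The base case $n=1$ is the forced update of sites $0$ and $1$.

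For the inductive step, suppose the avalanche has not terminated after $n$ updates. Then some fitness is at most $t$. Every unrefreshed site has fitness $>t$, so the minimum-fitness site $x$ lies in $\{0,\dots,m_n\}$. (Almost surely there are no ties, since the refreshed values are continuous.) The next update refreshes $x$ and $x+1\le m_n+1$. The refreshed set therefore becomes $\{0,\dots,m_{n+1}\}$ with $m_{n+1}\in\{m_n,m_n+1\}$, and the induction hypothesis is preserved.

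Taking the union over $n$ gives the two cases. If $r(t)<\infty$, then $\xi(t)=\{0,\dots,r(t)-1\}$. If $r(t)=\infty$, then $m_n\to\infty$, because $m_n$ is nondecreasing and $\xi(t)=\bigcup_n\{0,\dots,m_n\}$; hence $\xi(t)=\Z_+$.

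For the equality of events, one inclusion is immediate. Each update refreshes at most one new site, so $r(t)\le \eta(t)+1$, and therefore $\{r(t)=\infty\}\subseteq\{\eta(t)=\infty\}$.

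For the converse, it suffices to show that $\{\eta(t)=\infty,\ r(t)=M\}$ is null for each finite $M$. On this event the avalanche performs infinitely many updates, and after some random time every update stays inside the window $\{0,\dots,M-1\}$.

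Each update refreshes the right neighbour $x+1$ of the current minimum with a fresh $\mathrm{Exp}(1)$ variable. With probability $e^{-t}>0$ this value exceeds $t$, independently of the past. A cleaner route is to exhibit a finite pattern of fresh values, of positive probability, that forces termination from any state of the window. Take the event that the next $M$ updates all produce values $>t$ at both refreshed sites. On this event each update lifts the current minimum above $t$ without creating a new site $\le t$. The number of sites with fitness $\le t$ therefore strictly decreases, so the avalanche ends within $M$ steps or leaves the window.

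This event has conditional probability at least $e^{-2tM}$ given the past, uniformly in the configuration. By the conditional Borel–Cantelli lemma (Lévy's extension), applied along the successive blocks of $M$ updates, such an event occurs almost surely on $\{\eta(t)=\infty\}$. That contradicts infinitely many updates confined to the window. Hence the event is null, and taking the union over $M$ completes the proof.

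The main point needing care is the last step. One must check that the forcing pattern really terminates the avalanche regardless of the configuration, and handle the possibility that during these $M$ steps a refresh of site $m+1$ enlarges the range. In that case the range exceeds $M$, so this too is excluded on the event $\{r(t)=M\}$.
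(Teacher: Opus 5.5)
Your proof is correct and follows the paper's argument. It uses the same induction to show that the range stays an initial interval, and the same block argument based on the event that both refreshed values exceed $t$ for $M$ consecutive updates, which has conditional probability at least $e^{-2tM}$. The differences are minor: you invoke Lévy's conditional Borel--Cantelli lemma instead of the paper's bound $(1-e^{-2tL})^n\to0$, and you add two details the paper leaves implicit, namely the check that the selected minimum lies in the refreshed interval and the inclusion $\{r(t)=\infty\}\subseteq\{\eta(t)=\infty\}$.
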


\begin{proof}
After the forced update the range is $\{0,1\}$. If the current range is
$\{0,\ldots,\ell\}$ and the selected site is $i$, the refreshed pair is
contained in $\{0,\ldots,\ell+1\}$. Thus the range remains an interval
with left endpoint $0$.

It remains to exclude infinite duration with finite range. On the event
that the range stays in $\{0,\ldots,L-1\}$, there are at most $L$ sites
below the threshold. If, during the next $L$ updates, both refreshed
fitnesses exceed $t$ at every update, then the number of sites below $t$
decreases at each step and the avalanche stops. This event has conditional
probability at least $e^{-2tL}$. Applying the estimate to successive
blocks of $L$ updates gives
\[
        \mathbb P\bigl(\eta(t)=\infty,\ r(t)\leq L\bigr)
        \leq\lim_{n\to\infty}(1-e^{-2tL})^n=0.
\]
Taking the union over $L$ proves the result.
\end{proof}

The avalanche family can be coupled monotonically in the threshold by the
self-similar graphical representation of Meester and Znamenski
\cite[Section~4]{MZ04}, with the updated neighbourhood
$\{-1,0,1\}$ replaced by $\{0,1\}$. We recall only the part needed below.
During the exploration of a $t$-avalanche, every refreshed value at most
$t$ is revealed exactly, whereas a value larger than $t$ is revealed only
through the event that it exceeds $t$. Let $\mathcal H_t$ be the
$\sigma$-field generated by the explored genealogy, the identities of the
refresh variables encountered, and this partially revealed information.
The family $(\mathcal H_t)_{t\geq0}$ is increasing.

\begin{lemma}[Regeneration at a level]
\label{lem:level-regeneration}
Suppose that the $t$-avalanche has finite range $I$. Conditionally on
$\mathcal H_t$, its terminal fitnesses are independent and have the form
\[
        F_x=t+E_x,\qquad x\in I,
\]
where the $E_x$ are independent $\operatorname{Exp}(1)$ variables,
independent of $\mathcal H_t$. Consequently, when the threshold is raised
above $t$, the next triggering level occurs at rate $|I|$, its location is
uniform on $I$, and the subavalanche attached there is independent of
$\mathcal H_t$ and has the law of an avalanche at the triggering level.
\end{lemma}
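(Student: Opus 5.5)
The plan is to construct the $t$-avalanche explicitly from i.i.d.\ refresh variables, using the principle of deferred decisions, and then to read off the conditional law of the terminal fitnesses from the memoryless property of the exponential distribution. I would attach to each update step $n$ of the avalanche a fresh pair of independent $\operatorname{Exp}(1)$ variables for the two refreshed sites, and let the exploration reveal each refreshed value $V$ in the censored form $(V\wedge t,\mathbf 1\{V>t\})$. The key observation is that the avalanche dynamics below level $t$ depend on a value exceeding $t$ only through the indicator $\mathbf 1\{V>t\}$. Indeed, the minimum site is always at most $t$ until the avalanche stops, and a site with value above $t$ is never selected as the minimum before termination; it can only be overwritten when it is the right neighbour of a selected site. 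Hence the genealogy, the stopping time and the range $I$ are all $\mathcal H_t$-measurable.

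For the terminal fitnesses, fix $x\in I$. Its terminal fitness is the last value written at $x$, and this value exceeds $t$ because the avalanche has terminated. The identity of the refresh variable carrying this last value is known from $\mathcal H_t$, and about its actual size $\mathcal H_t$ reveals only that it exceeds $t$. Distinct sites carry distinct refresh variables. By the strong Markov (stopping-time) version of deferred decisions, conditionally on $\mathcal H_t$ these variables are independent $\operatorname{Exp}(1)$ variables conditioned to exceed $t$. The memoryless property then writes each as $t+E_x$, with $E_x$ i.i.d.\ $\operatorname{Exp}(1)$ and independent of $\mathcal H_t$. The main technical point is making this conditioning rigorous when the number of steps is random. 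I would handle it by conditioning on the full finite history $h$ (each possible genealogy with its censored data). Given $h$, the finitely many refresh variables are independent, and the event $\{\text{history}=h\}$ factorizes into constraints on the individual variables. For the terminal variables this constraint is exactly $\{V>t\}$, which yields the product form. Summing over the countably many histories gives the claim.

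The consequences then follow directly. Raise the threshold continuously from $t$ to $s>t$. A new avalanche is triggered at the first level at which some terminal fitness $t+E_x$ is crossed, so the triggering level is $t+\min_{x\in I}E_x$. This minimum is $\operatorname{Exp}(|I|)$, so the triggering occurs at rate $|I|$, and the minimizing site is uniform on $I$ by exchangeability. At the triggering level $t'$, the triggering site has fitness exactly $t'$. The other sites of $I$ have fitnesses $t'+E'_x$ with fresh independent exponentials, again by memorylessness. Sites outside $I$ have never been touched, so they still carry their initial values above $t'$ (which, as noted, do not affect the law), or they are fresh. The subavalanche is driven by the site at $t'$ together with refresh variables never used before, and all other fitnesses exceed $t'$. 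It is therefore a $t'$-avalanche, run on fresh randomness, and independent of $\mathcal H_t$. This works equally when the site is updated by the Meester--Znamenski self-similar construction, since the forced update at a level in that construction corresponds exactly to the minimum site sitting at that level.
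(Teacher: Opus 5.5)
Your proposal is correct and follows essentially the same route as the paper: the last refresh variables at the sites of $I$ are distinct and are seen by $\mathcal H_t$ only through the event that they exceed $t$. Independence and the lack-of-memory property then give the form $t+E_x$, and the minimum of the $|I|$ independent excesses gives the rate $|I|$ and a uniform location. You also spell out two things the paper leaves implicit: the deferred-decision conditioning, and the fresh-randomness argument showing that the attached subavalanche is an independent avalanche at the triggering level. One small correction to the conditioning: condition on the countable discrete skeleton of selected sites and exceedance indicators. Histories that carry the exact sub-$t$ values are uncountably many, so you cannot sum over them.
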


\begin{proof}
On $\{I\text{ is finite}\}$, the exploration determines, for every
$x\in I$, the identity of the last refresh variable assigned to $x$.
These variables are distinct and, conditionally on $\mathcal H_t$, each
has been observed only through the event that it exceeds $t$. Independence
and the lack-of-memory property therefore give the stated independent
excesses. Their minimum is exponential with rate $|I|$, and its location
is uniform on $I$. 
\end{proof}

Iterating the regeneration property gives the following self-similar
graphical representation. For $s\geq0$, set
$\mu_s:=\mathcal L(\xi(s))$. Independently for $x\in\Z_+$, take a marked
Poisson point process
\[
        \Pi_x=\sum_{j\geq1}\delta_{(\tau_{x,j},A_{x,j})}
        \quad\text{on }(0,\infty)\times2^{\Z_+}
        \quad\text{with intensity }ds\,\mu_s(dA).
\]
Thus $(\tau_{x,j})_{j\geq1}$ is a rate-one Poisson process and,
conditionally on $\tau_{x,j}=s$, the mark $A_{x,j}$ is an independent
copy of the range set of an $s$-avalanche rooted at the origin. At such
a mark, draw horizontal arrows from $(x,s)$ to $(x+y,s)$ for every
$y\in A_{x,j}$, and allow upward motion along the vertical lines.
This defines a random directed graph on $\Z_+ \times [0,\infty)$.
Starting from the seed $\{0,1\}$ at level $0$, let
$\xi^{\mathrm{gr}}(t)$ be the set of sites reachable at level $t$ by
directed paths of vertical segments and horizontal arrows.

For $n\geq3$, let
\[
        \sigma_n^{\mathrm{gr}}
        :=\inf\{t\geq0:|\xi^{\mathrm{gr}}(t)|\geq n\},
        \qquad
        \sigma_n:=\inf\{t\geq0:|\xi(t)|\geq n\}.
\]
Before either stopping level, fewer than $n$ vertical lines are active;
hence only finitely many reachable marks occur on every bounded level
interval. Lemma~\ref{lem:level-regeneration} may therefore be iterated
without accumulation and gives
\[
        \bigl(\xi^{\mathrm{gr}}
        (t\wedge\sigma_n^{\mathrm{gr}})\bigr)_{t\geq0}
        \stackrel{\mathrm d}=
        \bigl(\xi(t\wedge\sigma_n)\bigr)_{t\geq0}.
\]
Letting $n\to\infty$ and using monotonicity yields
\[
        (\xi^{\mathrm{gr}}(t))_{t\geq0}
        \stackrel{\mathrm d}=
        (\xi(t))_{t\geq0}.
\]
Since reachability is increasing in the terminal level, this
representation provides a simultaneous monotone coupling of the
avalanche ranges.

Define the shifted range process
\[
        X_t:=r(t)-1\in\N\cup\{\infty\},\qquad t\geq0.
\]
Thus $X_0=1$ and, on $\{X_t=x<\infty\}$, the current range is $\{0,\ldots,x\}$.

\begin{proposition}[Infinitesimal range evolution]
\label{prop:range-evolution}
At level $t$, conditionally on $X_t=x<\infty$, the update rate is
$x+1$, with update kernel
\begin{equation}
\label{eq:range-update}
        x\longmapsto\max\{x,U_x+X_t'\},
\end{equation}
where $U_x$ is uniform on $\{0,\ldots,x\}$ and $X_t'$ is independent of $U_x$ with the law of $X_t$. Null changes are allowed. The process is increasing and is sent to $\infty$ when its graphical range becomes infinite.
\end{proposition}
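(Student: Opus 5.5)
The plan is to read the proposition directly off the graphical representation $(\xi^{\mathrm{gr}}(t))_{t\geq0}\stackrel{\mathrm d}=(\xi(t))_{t\geq0}$, together with Lemma~\ref{lem:range-interval} and Lemma~\ref{lem:level-regeneration}. By Lemma~\ref{lem:range-interval}, on $\{X_t=x<\infty\}$ the reachable set at level $t$ is exactly $\{0,\ldots,x\}$. In the graphical construction, the reachable set at level $t$ is a function of the marks $\Pi_y$ restricted to $(0,t]\times 2^{\Z_+}$ for the sites $y$ that were reachable at those levels. The marks at levels in $(t,t+h]$ on the lines $y\in\{0,\ldots,x\}$ are independent of this information by the Poisson independence. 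Only lines $y\leq x$ are active at level $t$, so during $(t,t+h]$ new arrows can only emanate from these $x+1$ vertical lines, until the range grows. Each line carries a rate-one Poisson clock, so the total rate of marks on active lines is $x+1$. Conditionally on a mark occurring there, its location $U_x$ is uniform on $\{0,\ldots,x\}$, by superposition of $x+1$ independent rate-one processes. Its mark $A$ has law $\mu_t$, up to $o(1)$ as $h\to0$, by continuity of the intensity $ds\,\mu_s(dA)$ in the level variable, which follows from the monotone coupling; it is independent of $U_x$.

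The effect of a mark at $(U_x,t)$ is to add the sites $U_x+A$. Since $A$ is distributed as $\xi(t)$, Lemma~\ref{lem:range-interval} gives that $A=\{0,\ldots,X'_t\}$ with $X'_t\stackrel{\mathrm d}=X_t$. The new reachable set is then $\{0,\ldots,x\}\cup\{U_x,\ldots,U_x+X'_t\}$. This is again an interval starting at $0$, because $U_x\leq x$, and its right endpoint is $\max\{x,U_x+X'_t\}$. This gives the kernel \eqref{eq:range-update}. Null changes occur when $U_x+X'_t\leq x$. If $X'_t=\infty$ the process jumps to $\infty$, consistent with the convention that $X=\infty$ once the graphical range is infinite. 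Monotonicity follows because reachability only grows with the level.

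The step needing care, and the main obstacle, is justifying the infinitesimal description rigorously rather than heuristically. There are two issues. First, two marks in $(t,t+h]$ must have probability $O(h^2)$. This holds since at most $x+1$ lines are active before the next jump, and after a jump to a finite value only finitely many are active. Second, the jump from a mark at level $s\in(t,t+h]$ uses $\mu_s$ rather than $\mu_t$. I would handle this by the stopping at $\sigma_n$ already used in the construction: up to $\sigma_n$ the process is a finite-rate pure-jump process with time-inhomogeneous kernel $x\mapsto\max\{x,U_x+X'_s\}$, $X'_s\sim\mathcal L(X_s)$. The rate is exactly $x+1$ and the kernel is evaluated at the current level, which is precisely the claimed statement at level $t$. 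Letting $n\to\infty$ yields the description on $\{X_t<\infty\}$. No continuity of $s\mapsto\mu_s$ is needed if the generator is stated with the time-dependent kernel.
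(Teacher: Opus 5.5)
Your proposal is correct and follows the same route as the paper. Superposing the $x+1$ active rate-one clocks gives rate $x+1$ and a uniform location $U_x$; the mark carries an independent avalanche at the current level whose range is $\{0,\ldots,X_t'\}$ by Lemma~\ref{lem:range-interval}; and its union with $\{0,\ldots,x\}$ is $\{0,\ldots,\max\{x,U_x+X_t'\}\}$. Your localisation at $\sigma_n$, with the kernel evaluated at the current level, is a more careful version of what the paper leaves implicit. It also makes two of your side claims unnecessary: that two marks in $(t,t+h]$ have probability $O(h^2)$, which is in general only $o(h)$ when $X_t'$ has infinite mean, and that continuity of $s\mapsto\mu_s$ follows from the monotone coupling.
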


\begin{proof}
The superposition of the $x+1$ active rate-one Poisson processes gives the next mark at rate $x+1$, at a uniform location $U_x$. The mark carries an independent avalanche at the current level, whose shifted range is $\{0,\ldots,X_t'\}$. Its union with the current range is
\[
        \{0,\ldots,\max\{x,U_x+X_t'\}\},
\]
which proves the update rule.
\end{proof}

For $k\geq1$, set
\[
        q_k(t):=\mathbb{P}(X_t\geq k),\qquad
        q_\infty(t):=\mathbb{P}(X_t=\infty)=\lim_{k\to\infty}q_k(t).
\]
The next proposition is the only input from the avalanche construction used in the analytic part of the paper.

\begin{proposition}[Triangular tail system]
\label{prop:tail-system}
Each $q_k$ is non-decreasing and continuously differentiable in $t$,
while $k\mapsto q_k(t)$ is non-increasing. Moreover, $q_1\equiv1$,
$q_k(0)=0$ for $k\geq2$, and
\begin{equation}
\label{eq:tail-system}
        q_k'
        =
        \sum_{u=1}^{k-1}q_u(q_{k-u}-q_k)+q_k(1-q_k),
        \qquad k\geq2.
\end{equation}
In particular, the marginal law of the avalanche range is uniquely determined by this triangular system.
\end{proposition}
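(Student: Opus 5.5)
The plan is to derive \eqref{eq:tail-system} as a Kolmogorov forward equation for the event $\{X_t\geq k\}$, using only the jump description in Proposition~\ref{prop:range-evolution}, and then to read off the remaining properties.

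\textbf{Elementary properties.} Monotonicity of $t\mapsto q_k(t)$ follows because $X$ is increasing; this is where the monotone coupling from the graphical representation enters. Monotonicity of $k\mapsto q_k(t)$ is trivial. The initial values follow from $X_0=1$: this gives $q_1(0)=1$ and $q_k(0)=0$ for $k\geq2$. Moreover $X_t\geq1$ for all $t$, so $q_1\equiv1$.

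\textbf{The forward equation.} Fix $k\geq2$ and consider the process stopped on entering $[k,\infty]$. While $X_t<k$ the total jump rate is at most $k$, so on $\{X_t<k\}$ the process behaves like a pure jump chain on $\{1,\dots,k-1\}$ with bounded rates. Its nonlinear dependence on the law of $X_t$ enters only through the kernel. Hence by Dynkin's formula,
\[
        q_k(t)=\int_0^t \sum_{x=1}^{k-1}\mathbb P(X_s=x)\,(x+1)\,
        \mathbb P(U_x+X_s'\geq k)\,ds .
\]
The target set $[k,\infty]$ includes $\infty$, so jumps to $\infty$ are automatically counted. Since $U_x$ is uniform on $\{0,\dots,x\}$, we have
\[
        (x+1)\,\mathbb P(U_x+X'_s\geq k)=S_x:=\sum_{u=0}^{x}q_{k-u}(s).
\]
The integrand involves only $q_1,\dots,q_k$, which are right-continuous, monotone and bounded. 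An induction on $k$ then shows they are continuous, so the integrand is continuous and $q_k$ is $C^1$, with
\[
        q_k'=\sum_{x=1}^{k-1}(q_x-q_{x+1})S_x .
\]

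\textbf{Algebraic identity.} Summation by parts turns this into
\[
        q_1S_1-q_kS_{k-1}+\sum_{x=2}^{k-1}q_x(S_x-S_{x-1}).
\]
Now use $S_x-S_{x-1}=q_{k-x}$, $S_1=q_k+q_{k-1}$ and $S_{k-1}=\sum_{u=1}^{k-1}q_u+q_k$, together with $q_1=1$. The expression becomes
\[
        \sum_{u=1}^{k-1}q_uq_{k-u}+q_k-q_k\sum_{u=1}^{k-1}q_u-q_k^2,
\]
which is exactly \eqref{eq:tail-system}.

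\textbf{Uniqueness.} Given $q_1,\dots,q_{k-1}$, the equation for $q_k$ is a scalar Riccati ODE with continuous coefficients and a locally Lipschitz right-hand side, with solutions confined to $[0,1]$. So by induction the system has a unique solution. This determines $q_k$ for all $k$, and hence $q_\infty=\lim_k q_k$ and the whole law of $X_t$.

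\textbf{Main obstacle.} The delicate step is justifying the forward equation rigorously, given that the process is nonlinear (its kernel depends on its own law) and may explode. I would handle this through the stopping at $\sigma_{k+1}$, before which at most $k$ lines are active. The graphical construction then gives a genuine inhomogeneous Markov chain with bounded rates on the relevant finite state space, and the law-dependence is just a time-dependent, measurable coefficient.
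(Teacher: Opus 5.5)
Your proposal is correct and follows essentially the same route as the paper. The paper also uses the compensator (Dynkin) identity for crossing level $k$, which is legitimate because the rate is bounded by $k$ below that level, with crossing rate $\sum_{j=0}^{x}q_{k-j}$; it then sums by parts, upgrades $q_k$ inductively from continuous to $C^1$, and obtains uniqueness recursively from the scalar locally Lipschitz equations. The appeal to right-continuity is unnecessary: the integral identity, whose integrand is bounded by $k$, already makes each $q_k$ Lipschitz and hence continuous.
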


\begin{proof}
Fix $k\geq2$. Below $k$ the proposed rate is bounded by $k$, so the usual compensator calculation is legitimate. If $X_t=x<k$, then the rate at which a proposed update crosses level $k$ is
\[
        (x+1)\mathbb{P}(U_x+X_t'\geq k)
        =\sum_{j=0}^x q_{k-j}(t).
\]

Writing $p_x=q_x-q_{x+1}$, the compensator identity gives, for
$0\leq s\leq t$,
\[
\begin{split}
        q_k(t)-q_k(s)
        &=\int_s^t\sum_{x=1}^{k-1}p_x(u)
          \sum_{j=0}^xq_{k-j}(u)\,du\\
        &=\int_s^t\left[
          \sum_{v=1}^{k-1}q_v(u)(q_{k-v}(u)-q_k(u))
          +q_k(u)(1-q_k(u))\right]\,du.
\end{split}
\]
The initial conditions follow from $X_0=1$. Inductively in $k$, the integral
identity first shows that $q_k$ is continuous; its integrand is then
continuous, so $q_k$ is continuously differentiable and satisfies
\eqref{eq:tail-system} pointwise. At level $k$, this is a scalar locally
Lipschitz equation once $q_1,\ldots,q_{k-1}$ are known, and uniqueness of
the marginals follows by recursion.
\end{proof}

Write
\[
        m(t):=\mathbb{E}[X_t]=\sum_{k\geq1}q_k(t),\qquad
        t_{\mathrm{sus}}^{\mathrm{os}}
        :=\sup\{t\geq0:m(t)<\infty\},
\]
and
\[
        t_{\infty}^{\mathrm{os}}
        :=\inf\{t\geq0:\mathbb{P}(X_t=\infty)>0\}
        =\inf\{t\geq0:q_\infty(t)>0\}.
\]
Positive mass at infinity forces an infinite mean, so
$t_{\mathrm{sus}}^{\mathrm{os}}\leq t_{\infty}^{\mathrm{os}}$.

\begin{theorem}[One-sided avalanche range criticality]
\label{thm:range-criticality}
For the one-sided avalanche range process,
\[
        t_{\mathrm{sus}}^{\mathrm{os}}
        =t_{\infty}^{\mathrm{os}}
        =:t_c^{\mathrm{os}},
        \qquad 1\leq t_c^{\mathrm{os}}\leq2\log2.
\]
Moreover,
\[
        \mathbb{P}(X_{t_c^{\mathrm{os}}}=\infty)=0,
        \qquad
        \mathbb{P}(X_{t_c^{\mathrm{os}}+s}=\infty)>0,
        \quad s>0.
\]
There exists \(c>0\) such that
\[
        \mathbb{P}(X_{t_c^{\mathrm{os}}}\geq k)
        \geq ck^{-2/3},
        \qquad k\geq1.
\]
For every \(t<t_c^{\mathrm{os}}\), there exists \(\lambda_t>0\) such that
\(\mathbb{E}[e^{\lambda_tX_t}]<\infty\).
\end{theorem}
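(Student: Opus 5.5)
We work entirely with the triangular system of Proposition~\ref{prop:tail-system}, together with the monotone coupling given by the graphical representation. First we record moment identities. Summing \eqref{eq:tail-system} over $2\le k\le K$ and letting $K\to\infty$ formally gives
\[
        m'=\sum_{k\ge2}\Bigl[\sum_{u=1}^{k-1}q_u(q_{k-u}-q_k)+q_k(1-q_k)\Bigr]
        \approx m+\tfrac12\bigl(m^2-S_2\bigr)+\dots,
\]
where $S_2=\sum_k(2k-1)q_k\cdot(\text{correction})$ denotes a second tail moment. More cleanly, we will work with the generating function $G_t(z)=\sum_{k\ge1}q_k(t)z^{k}$ for $z<1$, for which the system closes as a Riccati-type PDE. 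Schematically,
\[
        \partial_tG=\frac{G^2}{z}\cdot(\dots)-\dots,
\]
which can be derived by multiplying \eqref{eq:tail-system} by $z^k$ and summing. For the bounds $1\le t_c^{\mathrm{os}}\le2\log2$, we evaluate at $z\uparrow1$. An upper comparison (dropping negative terms) gives $m'\le (m+1)^2$-type growth, hence $m$ is finite up to a time at least $1$. A lower comparison with a branching process (every update at rate $x+1$ attaches an independent copy) gives explosion of the mean before $2\log2$; alternatively one can quote the branching/percolation bounds of \cite{GMN06}. For exponential moments when $t<t_c$, we will show that finiteness of $m(t)$ implies that $G_t$ is analytic slightly beyond $z=1$. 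The strategy is to treat the ODE for $G_t(z)$ at $z=1+\epsilon$ as a perturbation of the one at $z=1$: on $[0,t]$ the solution at $z=1$ stays bounded, so by continuous dependence a bounded solution persists for $z$ close to $1$. Monotonicity in $t$ (the coupling) justifies the truncation limits.

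The core step is the polynomial lower bound at $t_c:=t_{\mathrm{sus}}^{\mathrm{os}}$. Since $m(t)\uparrow\infty$ as $t\uparrow t_c$ while the ODE for $m$ involves $m^2$ minus a second-moment term, we need the extremal inequality of Lemma~\ref{lem:decreasing-mass}: for a non-increasing sequence $q_k\in[0,1]$ with $\sum q_k=m$, the relevant second tail functional is bounded in terms of $m$ (roughly $\sum_k k q_k^2\lesssim m^{3/2}$, or a variant). Inserting this into the equations shows that $m$ blows up at $t_c$ in a controlled way. Combined with the fact that the truncated sums $\sum_{k\le K}q_k(t_c)$ grow, and with the monotonicity of $k\mapsto q_k$, this yields $q_k(t_c)\ge ck^{-2/3}$. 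I expect this step to be the main obstacle: one must choose exactly the right functional so that the extremal inequality converts divergence of the mean into a pointwise tail bound, rather than merely into divergence of a sum.

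For sharpness we freeze the graft law at $\mu_{t_c}$ and consider the process that jumps at rate $x+1$ via $x\mapsto\max\{x,U_x+X'\}$ with $X'\sim\mathcal L(X_{t_c})$. From state $x$, the rate at which the process at least doubles is at least $(x+1)\cdot\tfrac12\,q_{2x}(t_c)\gtrsim x^{1/3}$. Hence the expected time to cross the dyadic scale $2^j$ is $O(2^{-j/3})$, which is summable. The frozen process therefore explodes with positive probability in arbitrarily small time $s$, and in fact starting from any state. Because $q_k(t)$ is non-decreasing in $t$, the true process on $[t_c,t_c+s]$ dominates the frozen one; we make this rigorous through a triangular comparison, inducting on $k$ in \eqref{eq:tail-system}, since the right-hand side is increasing in $q_u$ for $u<k$. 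This gives $q_\infty(t_c+s)>0$, hence $t_\infty\le t_c$, and equality then follows from $t_{\mathrm{sus}}\le t_\infty$. Finally, $q_\infty(t_c)=0$ follows because $q_\infty$ is left-continuous: each $q_k$ is continuous and $q_\infty=\inf_k q_k$ is upper semicontinuous and non-decreasing. The argument runs as follows. If $q_\infty(t_c)>0$, then by this semicontinuity, together with the exponential-moment argument applied at levels below $t_c$, $m$ could not have been finite there; more precisely, $q_k(t)\to q_k(t_c)$, and the exponential decay of $q_k(t)$ for $t<t_c$ is uniform only if $m$ stays bounded. Should this route fail, we would instead argue via the ODE $q_\infty'=q_\infty(1-q_\infty)+\dots$ with $q_\infty=0$ for all $t<t_c$.
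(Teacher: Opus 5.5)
\textbf{The critical tail $k^{-2/3}$ is not proved, and it is the crux of the theorem.} Divergence of $\sum_k q_k(t_c)$ together with monotonicity in $k$ cannot give a pointwise bound: $q_k=1/k$ is non-increasing with divergent sum. Moreover $\alpha=1$ is exactly where your dyadic series $\sum_j 2^{j(\alpha-1)}$ diverges. So without a quantitative exponent $\alpha<1$ you also lose $t_{\mathrm{sus}}^{\mathrm{os}}=t_\infty^{\mathrm{os}}$.

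The extremal inequality is also misplaced. The functional bound you guess, $\sum_k kq_k^2\lesssim m^{3/2}$, is false: take $q_k=\mathbf 1_{\{k\le m\}}$. Summing \eqref{eq:tail-system} gives exactly $m'=\frac12m^2+m-\frac12\sum_kq_k^2$, so no second moment enters. Since $\sum_kq_k^2\le m$, this already gives $m'\ge\frac12m(m+1)$ and hence $t_c\le2\log2$. Your branching comparison points the wrong way, because $x+X'\ge\max\{x,U_x+X'\}$. Likewise $m'\le(m+1)^2$ would only give $t_c\ge\frac12$; you need the truncated $m_n'\le m_n^2$.

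The inequality $\mathbb E[\max\{K,K'\}]\ge\frac43\mathbb E[K]-\frac13$ belongs one level up. Take $K,K'$ i.i.d.\ with the non-increasing law $(q_k/m)$, and put $A=\sum_k kq_k$, $L=\sum_{u<k}kq_uq_k$, $D=\sum_k kq_k^2$. Then $A'=2mA-L+A-D$ and $m^2\mathbb E[\max\{K,K'\}]=2L+D$. The inequality bounds the loss $L$ from below. With $m^2\le2A-m$ it yields $A'\le\frac43(m+1)A\le\frac83(m'/m)A$, hence $\mathbb E[X_t^2]\le2m(t)^{8/3}$ for $t<t_c$.

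The pointwise bound then comes from \emph{subcritical} levels. Paley--Zygmund gives $\mathbb P(X_t\ge m(t)/2)\ge\frac18m(t)^{-2/3}$. Choose $t_n<t_c$ with $m(t_n)=2n$; this requires $m\uparrow\infty$ at $t_c$, obtained by restarting $m_n'\le m_n^2$ at $t_c$ if $m$ stayed bounded. Monotonicity in the \emph{level} then gives $q_n(t_c)\ge q_n(t_n)\ge\frac18(2n)^{-2/3}$.

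\textbf{No infinite avalanche at $t_c$.} Your semicontinuity argument runs backwards. The function $q_\infty=\inf_kq_k$ is upper semicontinuous, and a non-decreasing upper semicontinuous function is right-continuous, not left-continuous; $\mathbf 1_{[t_c,\infty)}$ is an example. Since $q_\infty\equiv0$ on $[0,t_c)$, left-continuity at $t_c$ is exactly what must be shown. The fallback fails too: loss of uniform exponential decay as $t\uparrow t_c$ is forced by $m\uparrow\infty$ and contradicts nothing, and there is no closed equation for $q_\infty$ excluding a jump.

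What is needed is quantitative. From $m'\ge m^2/2$ you get $m(t)\le2/(t_c-t)$, hence $q_n(t_c-C/n)\le m(t_c-C/n)/n\le2/C$. On $[t_c-C/n,t_c]$ the tail equation gives $q_n'\le B_n-(n-1)q_n$ with $B_n=1+\sum_{j<n}q_j(t_c)$. Integrate over this interval of length $C/n$, and use Ces\`aro convergence of $B_n/(n-1)$ to $q_\infty(t_c)$. This shows $q_n(t_c-C/n)\to q_\infty(t_c)$, so $q_\infty(t_c)\le2/C$ for every $C>0$.

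\textbf{Remaining steps.} The frozen explosion via summable dyadic crossing times and the triangular comparison match the paper. The exponential-moment step is right in spirit but not as written. $G_t(z)$ satisfies no closed equation, since the loss term $\sum_kz^kq_k\sum_{u=2}^kq_u$ is not a function of $G_t(z)$, so ``continuous dependence'' has nothing to act on. The rigorous form of your perturbation around $z=1$ uses $D_n=\sum_{k\le n}(r^k-1)q_k$. Discard the loss term and use $r^{u+v}-1=(r^u-1)+(r^v-1)+(r^u-1)(r^v-1)$ to get the one-sided inequality $D_n'\le2mD_n+D_n^2$. Since $D_n(0)=r-1$ and $m$ is bounded on $[0,t_0]$, taking $r$ close to $1$ keeps $D_n$ bounded uniformly in $n$.
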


Set
\[
        t_*:=t_{\mathrm{sus}}^{\mathrm{os}}.
\]

\section{Tail evolution and susceptibility blow-up}
\label{sec:tail-evolution}

Equation~\eqref{eq:tail-system} may be written
\begin{equation}
\label{eq:tail-gain-loss}
        q_k'
        =
        \sum_{u=1}^{k-1}q_uq_{k-u}
        -q_k\sum_{u=2}^{k}q_u.
\end{equation}
Every term in \eqref{eq:tail-system} is non-negative because
\(k\mapsto q_k(t)\) is non-increasing. Consequently,
\begin{equation}
\label{eq:q-prime-gain-bound}
        0\leq q_k'(t)
        \leq
        g_k(t):=\sum_{u=1}^{k-1}q_u(t)q_{k-u}(t).
\end{equation}

\begin{lemma}[Differentiation of tail sums]
\label{lem:tail-differentiation}
Fix \(t_0<t_{\mathrm{sus}}^{\mathrm{os}}\). Then
\(\sum_{k\geq1}q_k'(t)\) converges uniformly on \([0,t_0]\), and
\[
        m'(t)=\sum_{k\geq1}q_k'(t).
\]
If \(A(t_0):=\sum_{k\geq1}kq_k(t_0)<\infty\), then
\(\sum_{k\geq1}kq_k'(t)\) also converges uniformly on \([0,t_0]\), and
\[
        A'(t)=\sum_{k\geq1}kq_k'(t).
\]
\end{lemma}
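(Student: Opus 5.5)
The plan is to use the Weierstrass M-test together with the standard theorem on termwise differentiation of series of $C^1$ functions. Since $q_k(0)=0$ for $k\ge2$, the fundamental theorem of calculus gives $q_k(t)=q_k(0)+\int_0^t q_k'$. So it suffices to show that $\sum_k q_k'$ (respectively $\sum_k kq_k'$) converges uniformly on $[0,t_0]$. Then $m(t)=\sum_k q_k(t)$ (respectively $A(t)$) equals $\sum_k q_k(0)+\int_0^t\sum_k q_k'$, with the interchange of sum and integral justified by uniform convergence. Since the limit is continuous, $m$ (respectively $A$) is $C^1$ with the stated derivative. Note that $m(t_0)<\infty$ because $t_0<t_{\mathrm{sus}}^{\mathrm{os}}$ and $m$ is non-decreasing; the $q_k$ are non-decreasing by Proposition~\ref{prop:tail-system}.

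For the domination I use \eqref{eq:q-prime-gain-bound}, namely $0\le q_k'(t)\le g_k(t)$, together with monotonicity in $t$, which gives $g_k(t)\le g_k(t_0)$ on $[0,t_0]$. It then suffices to check summability of the constant majorants:
\[
\sum_{k\ge2} g_k(t_0)=\sum_{k\ge2}\sum_{u=1}^{k-1}q_u(t_0)q_{k-u}(t_0)=m(t_0)^2<\infty .
\]
This gives the first claim by the M-test. Here $q_1'\equiv0$, so the $k=1$ term is harmless.

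For the weighted statement the majorant is $kg_k(t_0)$. Writing $k=u+(k-u)$,
\[
\sum_{k} k\,g_k(t_0)=\sum_{u,v\ge1}(u+v)q_u(t_0)q_v(t_0)=2A(t_0)\,m(t_0).
\]
This is finite under the hypothesis $A(t_0)<\infty$, and $m(t_0)\le A(t_0)$ anyway. Uniform convergence of $\sum kq_k'$ on $[0,t_0]$ follows. The same argument as above gives $A(t)=\int_0^t\sum_k kq_k'+A(0)$ with $A(0)=1$, and hence $A'=\sum_k kq_k'$.

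There is no real obstacle. The only point needing care is that the bound $q_k'\le g_k$ has nonnegative terms, which relies on $k\mapsto q_k$ being non-increasing. The other is using monotonicity in $t$ to replace the $t$-dependent bound by the constant $g_k(t_0)$, which makes the M-test applicable on the closed interval including $t_0$. One-sided differentiability at the endpoints is handled by the same integral representation.
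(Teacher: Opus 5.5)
Your proposal is correct and essentially identical to the paper's proof. Both use the Weierstrass test with the time-uniform majorant $g_k(t_0)$ from \eqref{eq:q-prime-gain-bound}, whose sums are $m(t_0)^2$ and $2m(t_0)A(t_0)$; you only additionally spell out the standard termwise-differentiation step through the integral representation, which the paper leaves implicit.
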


\begin{proof}
For \(0\leq t\leq t_0\), monotonicity in time and
\eqref{eq:q-prime-gain-bound} give
\(0\leq q_k'(t)\leq g_k(t_0)\), while
\[
        \sum_{k\geq2}g_k(t_0)=m(t_0)^2<\infty.
\]
The Weierstrass test proves the first assertion. If \(A(t_0)<\infty\),
then
\[
        \sum_{k\geq2}kg_k(t_0)
        =
        \sum_{u,v\geq1}(u+v)q_u(t_0)q_v(t_0)
        =
        2m(t_0)A(t_0)<\infty,
\]
and the second assertion follows in the same way.
\end{proof}

\begin{proposition}[First moment]
\label{prop:first-moment}
For \(t<t_*\),
\begin{equation}
\label{eq:mean-identity}
        m'
        =
        \frac12m^2+m-\frac12\sum_{k\geq1}q_k^2.
\end{equation}
In particular,
\begin{equation}
\label{eq:riccati-lower}
        m'\geq\frac12m(m+1),
\end{equation}
\[
        1\leq t_*\leq2\log2,
        \qquad
        \int_0^t(m(s)+1)\,ds\leq2\log m(t),\quad t<t_*,
\]
and \(m(t)\uparrow\infty\) as \(t\uparrow t_*\).
\end{proposition}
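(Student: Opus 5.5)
We will sum the triangular system \eqref{eq:tail-system} over \(k\) and use Lemma~\ref{lem:tail-differentiation} to identify the result with \(m'(t)\) for \(t<t_*\). In the gain--loss form \eqref{eq:tail-gain-loss}, the \(k=1\) term is absent since \(q_1'=0\). The gain part sums to \(\sum_{k\ge2}\sum_{u=1}^{k-1}q_uq_{k-u}=m^2\). The loss part is \(\sum_{k\ge2}q_k\sum_{u=2}^k q_u\). Since \(q_1=1\), set \(S:=\sum_{k\ge2}q_k=m-1\). Then
\[
\sum_{2\le u\le k}q_uq_k=\tfrac12\Bigl(S^2+\sum_{k\ge2}q_k^2\Bigr).
\]
Rewrite \(\sum_{k\ge2}q_k^2=\sum_{k\ge1}q_k^2-1\) and substitute \(S=m-1\). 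This gives
\[
m'=m^2-\tfrac12\bigl((m-1)^2+\textstyle\sum_{k\ge1}q_k^2-1\bigr)=\tfrac12m^2+m-\tfrac12\sum_{k\ge1}q_k^2,
\]
which is \eqref{eq:mean-identity}. All series converge absolutely because \(m<\infty\), so the rearrangements are legitimate.

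For \eqref{eq:riccati-lower}, we use \(0\le q_k\le1\), hence \(\sum q_k^2\le\sum q_k=m\). This yields \(m'\ge\tfrac12m^2+\tfrac12m\). For the reverse comparison, we use \(\sum q_k^2\ge q_1^2=1\), which gives \(m'\le\tfrac12(m+1)^2-1\le\tfrac12(m+1)^2\).

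\emph{Bounds on \(t_*\).} The initial value is \(m(0)=1\).

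\begin{itemize}
\item[(i)] \emph{Upper bound.} Solve the Riccati comparison \(y'=\tfrac12y(y+1)\), \(y(0)=1\). Using \(\frac{d}{dt}\log\frac{y}{y+1}=\frac{y'}{y(y+1)}=\frac12\), we get \(\log\frac{y}{y+1}=\log\frac12+\frac t2\). This blows up when \(\frac y{y+1}\to1\), i.e. at \(t=2\log2\). Since \(m\ge y\) on \([0,t_*)\), we conclude \(t_*\le2\log2\). If \(t_*\) were larger, \(m\) would be finite on \([0,2\log2]\), contradicting the comparison.
\item[(ii)] \emph{Lower bound.} Use \(m'\le\frac12(m+1)^2\). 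With \(w=m+1\), we have \(w(0)=2\) and \((1/w)'\ge-\frac12\), so \(1/w(t)\ge\frac12-\frac t2\). Hence \(m\) stays finite for \(t<1\), provided we know \(m\) is finite up to where the comparison applies.
\end{itemize}

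The main obstacle is in (ii): a priori \(m\) could be infinite, and the differential identity is only available for \(t<t_*\). We handle this by a continuation argument. Suppose \(t_*<1\). The bound \(m(t)\le \frac{2}{1-t}-1\) on \([0,t_*)\) gives \(\sup_{t<t_*}m(t)<\infty\). Monotone convergence, with \(q_k\) continuous and increasing, then gives \(m(t_*)<\infty\). It remains to push finiteness slightly past \(t_*\). For this, we truncate: the partial sums \(m_K=\sum_{k\le K}q_k\) satisfy the same differential inequality \(m_K'\le\frac12(m+1)^2\)-type bound in a self-contained form. Summing \eqref{eq:q-prime-gain-bound} up to \(K\) gives \(m_K'\le m_K^2\) since the gain only involves indices below \(K\). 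This gives a uniform bound on \(m_K\) on a short interval beyond \(t_*\), contradicting the definition of \(t_*\). (Even the crude bound \(m_K'\le m_K^2\) suffices for this continuation.) Hence \(t_*\ge1\).

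\emph{The integral bound.} From \eqref{eq:riccati-lower}, \((\log m)'\ge\frac12(m+1)\). Integrating from \(0\) with \(\log m(0)=0\) gives \(\int_0^t(m+1)\,ds\le2\log m(t)\).

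\emph{Divergence at \(t_*\).} The function \(m\) is increasing, and \(m(t)\ge y(t)\) on \([0,t_*)\). If \(m\) stayed bounded as \(t\uparrow t_*\), the same truncated continuation argument (\(m_K'\le m_K^2\)) would extend finiteness beyond \(t_*\), which is impossible. Hence \(m(t)\uparrow\infty\) as \(t\uparrow t_*\).
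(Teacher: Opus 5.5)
Your proof is correct and essentially the paper's. It uses the same term-by-term summation of the gain--loss form, the same comparison with $y'=\tfrac12y(y+1)$ for $t_*\le2\log2$ and for the integral bound, and the same truncated continuation via $m_K'\le m_K^2$ for $m(t)\uparrow\infty$. The only difference is in $t_*\ge1$, where the paper simply integrates $m_n'\le m_n^2$ from $m_n(0)=1$ to get $m_n(t)\le(1-t)^{-1}$ uniformly in $n$, so your reverse inequality plus continuation is a detour. It is not a wasted one: if you keep the $-1$ in $m'\le\tfrac12(m+1)^2-1$, the same continuation argument gives the sharper bound $t_*\ge\sqrt2\log(1+\sqrt2)\approx1.246$.
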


\begin{proof}
Let \(m_n:=\sum_{k=1}^nq_k\). From
\eqref{eq:q-prime-gain-bound}, \(m_n'\leq m_n^2\). Since \(m_n(0)=1\),
comparison with \(y'=y^2\) gives \(m_n(t)\leq(1-t)^{-1}\) for \(t<1\);
letting \(n\to\infty\) yields \(t_*\geq1\).

For \(t<t_*\), Lemma~\ref{lem:tail-differentiation} permits summation of
\eqref{eq:tail-gain-loss}. The gain is \(m^2\), whereas
\[
        \sum_{k\geq2}q_k\sum_{u=2}^{k}q_u
        =
        \frac12\left((m-1)^2+\sum_{k\geq2}q_k^2\right),
\]
which proves \eqref{eq:mean-identity}. Since
\(\sum_kq_k^2\leq m\), \eqref{eq:riccati-lower} follows from \eqref{eq:mean-identity}. Comparison with
\(y'=y(y+1)/2\), \(y(0)=1\), gives \(t_*\leq2\log2\), and
\((\log m)'\geq(m+1)/2\) gives the integral estimate.

Suppose finally that \(m(t)\uparrow M<\infty\) as \(t\uparrow t_*\).
Monotone convergence gives \(m(t_*)=M\). Integrating
\(m_n'\leq m_n^2\) from \(t_*\) yields
\[
        m_n(t_*+s)
        \leq
        \frac{m_n(t_*)}{1-sm_n(t_*)}
        \leq
        \frac{M}{1-sM},
        \qquad 0\leq s<M^{-1}.
\]
Letting \(n\to\infty\) contradicts the definition of \(t_*\).
\end{proof}

\begin{proposition}[Second moment below criticality]
\label{prop:polynomial-moments}
For every \(t<t_*\),
\[
        A(t):=\sum_{k\geq1}kq_k(t)<\infty,
        \qquad
        \mathbb{E}[X_t^2]=2A(t)-m(t)<\infty.
\]
\end{proposition}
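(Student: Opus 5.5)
The plan is a Gronwall argument for truncated first tail moments, using only the gain bound \eqref{eq:q-prime-gain-bound} and the finiteness of $m$ below $t_*$. I will not use Lemma~\ref{lem:tail-differentiation} for the bound itself: its second assertion already assumes $A(t_0)<\infty$, which is what has to be proved. Instead I work with finite sums, which can be differentiated term by term freely.

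Fix $t_0<t_*$, so that $m(t)\leq m(t_0)<\infty$ for all $t\in[0,t_0]$. For $n\geq1$ set
\[
        A_n(t):=\sum_{k=1}^n kq_k(t),
        \qquad
        m_n(t):=\sum_{k=1}^n q_k(t).
\]
By Proposition~\ref{prop:tail-system}, $A_n$ is $C^1$. The bound \eqref{eq:q-prime-gain-bound} gives
\[
        A_n'
        \leq\sum_{k=2}^n k\sum_{u=1}^{k-1}q_uq_{k-u}
        =\sum_{\substack{u,v\geq1\\ u+v\leq n}}(u+v)q_uq_v .
\]
Every pair in the last sum has $u,v\leq n$. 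By symmetry in $u$ and $v$, the sum is therefore at most
\[
        2\sum_{u=1}^n uq_u\sum_{v=1}^n q_v = 2A_n m_n\leq 2m(t)A_n .
\]
The initial conditions $q_1\equiv1$ and $q_k(0)=0$ for $k\geq2$ give $A_n(0)=1$.

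Gronwall's inequality then yields
\[
        A_n(t)\leq\exp\Bigl(2\int_0^t m(s)\,ds\Bigr),
        \qquad t\leq t_0 .
\]
This bound is uniform in $n$, and finite because $m$ is bounded on $[0,t_0]$. One can make it explicit with the integral estimate of Proposition~\ref{prop:first-moment}, which gives $\int_0^t m\leq 2\log m(t)-t$ and hence $A(t)\leq e^{-2t}m(t)^4$. Letting $n\to\infty$ by monotone convergence gives $A(t)<\infty$ for every $t<t_*$.

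For the identity, $m(t)<\infty$ implies $X_t<\infty$ almost surely. Since $X_t\geq1$ takes values in $\N$, we have $X_t^2=\sum_{k=1}^{X_t}(2k-1)$. Taking expectations and summing nonnegative terms (Tonelli) gives
\[
        \mathbb E[X_t^2]=\sum_{k\geq1}(2k-1)q_k(t)=2A(t)-m(t),
\]
and both terms on the right are finite.

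No step should be a real obstacle. The one point needing care is to avoid the circularity noted above, which is why the argument runs through the truncations $A_n$ rather than through Lemma~\ref{lem:tail-differentiation}.
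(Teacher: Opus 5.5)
Your proof is correct and matches the paper's argument essentially step for step. You truncate to $A_n$, bound $A_n'\leq 2mA_n$ via \eqref{eq:q-prime-gain-bound}, apply Gronwall uniformly in $n$, pass to the limit by monotone convergence, and obtain the identity from $X_t^2=\sum_{k=1}^{X_t}(2k-1)$ and Tonelli; your explicit bound $A(t)\leq e^{-2t}m(t)^4$, taken from the integral estimate of Proposition~\ref{prop:first-moment}, is a correct small addition.
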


\begin{proof}
Set \(A_n(t):=\sum_{k=1}^n kq_k(t)\). By
\eqref{eq:q-prime-gain-bound},
\[
        A_n'(t)
        \leq
        \sum_{\substack{u,v\geq1\\u+v\leq n}}
        (u+v)q_u(t)q_v(t)
        \leq2m(t)A_n(t).
\]
Hence
\[
        A_n(t)
        \leq
        \exp\left\{2\int_0^tm(s)\,ds\right\},
\]
uniformly in \(n\). The right-hand side is finite by
Proposition~\ref{prop:first-moment}; monotone convergence gives
\(A(t)<\infty\). The identity for the second moment follows from
\(X_t^2=\sum_{k=1}^{X_t}(2k-1)\) and Tonelli's theorem.
\end{proof}

\section{The extremal inequality and the critical tail}
\label{sec:critical-tail}

The central estimate is an extremal property of decreasing probability
masses.

\begin{lemma}[Mixtures of discrete uniform laws]
\label{lem:uniform-mixture}
Let \((p_k)_{k\geq1}\) be a non-increasing probability mass function and
set \(a_n:=n(p_n-p_{n+1})\). Then \((a_n)_{n\geq1}\) is a probability mass
function and
\begin{equation}
\label{eq:uniform-mixture}
        p_k=\sum_{n\geq k}\frac{a_n}{n}.
\end{equation}
Equivalently, if \(\mathbb{P}(N=n)=a_n\), then a random variable which,
conditionally on \(N=n\), is uniform on \(\{1,\ldots,n\}\), has law
\((p_k)\).
\end{lemma}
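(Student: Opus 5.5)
The proof is a direct summation-by-parts computation. Set \(a_n:=n(p_n-p_{n+1})\). Since \((p_k)\) is non-increasing, each \(a_n\geq0\). Because \(\sum_k p_k=1\) and \((p_k)\) is non-increasing, we have \(kp_k\leq\sum_{j\leq k}p_j\leq1\). More sharply, \(kp_k\leq 2\sum_{k/2\leq j\leq k}p_j\to0\), so \(np_n\to0\) as \(n\to\infty\); the same bound shows \(p_n\to0\). This decay fact is the only genuinely non-formal input, and it is the step I expect to need care. Everything else is telescoping with nonnegative terms, so Tonelli applies.

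First I establish \eqref{eq:uniform-mixture}. For fixed \(k\),
\[
        \sum_{n\geq k}\frac{a_n}{n}=\sum_{n\geq k}(p_n-p_{n+1})=p_k-\lim_{N\to\infty}p_{N+1}=p_k,
\]
using \(p_N\to0\).

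Next I show that \((a_n)\) sums to one. Summing the partial sums by parts gives
\[
        \sum_{n=1}^{N}n(p_n-p_{n+1})=\sum_{n=1}^{N}p_n-Np_{N+1}.
\]
Letting \(N\to\infty\) and using \(Np_{N+1}\leq (N+1)p_{N+1}\to0\), the total is \(\sum_n p_n=1\).

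Alternatively, once \eqref{eq:uniform-mixture} holds, Tonelli gives
\[
        1=\sum_k p_k=\sum_k\sum_{n\geq k}\frac{a_n}{n}=\sum_n a_n\cdot\frac{n}{n}.
\]
This yields \(\sum_n a_n=1\) without the second telescoping, so I would use this route and only need \(p_N\to0\), which is immediate from summability.

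Finally, the probabilistic reformulation is a restatement. If \(\mathbb P(N=n)=a_n\) and, given \(N=n\), the variable is uniform on \(\{1,\ldots,n\}\), then the probability that it equals \(k\) is \(\sum_{n\geq k}a_n/n=p_k\). Conversely, this computation shows that the mixture law coincides with \((p_k)\).
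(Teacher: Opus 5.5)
Your proof is correct and is essentially the paper's. Both rest on the telescoping identity $a_n/n=p_n-p_{n+1}$ with $p_n\to0$, and your first route to $\sum_n a_n=1$, summation by parts plus $np_n\le 2\sum_{n/2\le j\le n}p_j\to0$, is exactly the paper's argument. Your preferred Tonelli interchange $\sum_k p_k=\sum_n (a_n/n)\cdot n$ is a small economy: it needs only $p_n\to0$, and it makes the $np_n\to0$ estimate unnecessary, since that estimate then follows from the summation-by-parts identity.
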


\begin{proof}
Monotonicity and summability imply \(np_n\to0\), since
\[
        np_n\leq2\sum_{j=\lceil n/2\rceil}^{n}p_j\longrightarrow0.
\]
Hence
\[
        \sum_{n=1}^Ma_n
        =
        \sum_{n=1}^Mn(p_n-p_{n+1})
        =
        \sum_{n=1}^Mp_n-Mp_{M+1}\longrightarrow1,
\]
and \(\sum_{n\geq k}a_n/n=\sum_{n\geq k}(p_n-p_{n+1})=p_k\).
\end{proof}

\begin{lemma}[Maximum under a decreasing mass function]
\label{lem:decreasing-mass}
Let \(K,K'\) be independent and identically distributed, with a non-increasing probability mass
function on \(\N\) and finite mean. Then
\begin{equation}
\label{eq:decreasing-max}
        \mathbb{E}[\max\{K,K'\}]
        \geq
        \frac43\mathbb{E}[K]-\frac13.
\end{equation}
\end{lemma}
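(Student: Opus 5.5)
The plan is to reduce the inequality to a pairwise statement about discrete uniform laws, using the mixture representation of Lemma~\ref{lem:uniform-mixture}. Let \(a_n:=n(p_n-p_{n+1})\) be the mixing weights, where \((p_k)\) is the common mass function of \(K\) and \(K'\). By that lemma, we may realise \(K\) and \(K'\) as follows: draw independent \(N,N'\) with law \((a_n)\), and then, conditionally, take independent \(K\) uniform on \(\{1,\ldots,N\}\) and \(K'\) uniform on \(\{1,\ldots,N'\}\). Let \(U_n\) denote a uniform variable on \(\{1,\ldots,n\}\), and set
\[
        f(n,m):=\mathbb{E}[\max\{U_n,U_m'\}]
\]
for independent \(U_n,U_m'\). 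Then, by Tonelli,
\[
        \mathbb{E}[\max\{K,K'\}]=\sum_{n,m\geq1}a_na_mf(n,m),
        \qquad
        \mathbb{E}[K]=\sum_{n\geq1}a_n\frac{n+1}{2}.
\]
Since \(\sum_n a_n=1\) and \(\mathbb{E}[K]<\infty\), the right-hand side of \eqref{eq:decreasing-max} can be rewritten as the absolutely convergent double sum
\[
        \frac43\mathbb{E}[K]-\frac13
        =\sum_{n,m\geq1}a_na_m\,\frac{n+m+1}{3}.
\]
It therefore suffices to prove the pointwise bound \(f(n,m)\geq (n+m+1)/3\) for all \(n,m\geq1\).

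By symmetry we may assume \(n\leq m\). We compute \(f\) through the tail formula
\[
        f(n,m)=\sum_{k=1}^m\bigl(1-\mathbb{P}(U_n<k)\,\mathbb{P}(U_m'<k)\bigr).
\]
For \(k\leq n\) the product equals \((k-1)^2/(nm)\), and for \(n<k\leq m\) it equals \((k-1)/m\). Summing the two elementary series gives, after simplification,
\[
        f(n,m)=\frac{m+1}{2}+\frac{n^2-1}{6m}.
\]
Multiplying the desired inequality by \(6m\) reduces it to
\[
        3m^2+3m+n^2-1\geq 2mn+2m^2+2m,
\]
which is equivalent to
\[
        (m-n)^2+m-1\geq0.
\]
This holds since \(m\geq1\), with equality only when \(n=m=1\).

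The only real obstacle is finding the right decomposition, namely recognising that both sides are bilinear (respectively linear) in the mixing weights, so that the inequality can be checked pair by pair. The closed form for \(f(n,m)\) is routine, but the algebra should be verified carefully. Integrability causes no trouble: all terms are non-negative, and the finite mean guarantees that every sum involved is finite.
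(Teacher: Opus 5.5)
Your proof is correct and follows the paper's argument. You use the uniform-mixture representation to reduce the claim to the pairwise bound $\mathbb{E}[\max\{U_n,U_m\}]\geq (n+m+1)/3$, and then verify it through the closed form $\frac{m+1}{2}+\frac{n^2-1}{6m}$ and the reduction to $(m-n)^2+m-1\geq0$, exactly as the paper does; the only cosmetic difference is that you derive the closed form from the tail-sum formula, whereas the paper evaluates the double sum directly.
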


\begin{proof}
Let \(U_a,U_b\) be independent and uniform on \(\{1,\ldots,a\}\) and
\(\{1,\ldots,b\}\), and assume \(a\leq b\). 
Then
\[
\begin{aligned}
ab\,\mathbb{E}[\max\{U_a,U_b\}]
&=\sum_{i=1}^a\left(i^2+\sum_{j=i+1}^b j\right) \\
&=\frac{b(b+1)}2\,a
  +\frac12\sum_{i=1}^a(i^2-i)
 =\frac{ab(b+1)}2+\frac{a(a^2-1)}6.
\end{aligned}
\]
Dividing by \(ab\) and rearranging gives
\[
        \mathbb{E}[\max\{U_a,U_b\}]
        =
        \frac{a+b+1}{3}
        +
        \frac{(b-a)^2+b-1}{6b} \geq\frac{a+b+1}{3} .
\]

Use Lemma~\ref{lem:uniform-mixture}, and let \(N,N'\) be independent with
law \((a_n)\). Conditionally on \((N,N')\), represent \(K,K'\) by
independent uniforms on \(\{1,\ldots,N\}\) and
\(\{1,\ldots,N'\}\). Then
\[
        \mathbb{E}[\max\{K,K'\}]
        \geq\frac{2\mathbb{E}[N]+1}{3}
        =
        \frac43\mathbb{E}[K]-\frac13,
\]
because \(\mathbb{E}[K]=(\mathbb{E}[N]+1)/2\).
\end{proof}

\begin{proposition}[Compensation estimate]
\label{prop:moment-comparison}
For every \(t<t_*\),
\begin{equation}
\label{eq:moment-comparison}
        A(t)\leq m(t)^{8/3}.
\end{equation}
\end{proposition}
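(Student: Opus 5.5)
The plan is to compare logarithmic derivatives: prove
\[
A'\leq\frac83\,\frac{A}{m}\,m'\qquad\text{on }[0,t_*),
\]
and integrate. At \(t=0\) we have \(q_1=1\) and \(q_k=0\) for \(k\geq2\), so \(m(0)=A(0)=1\) and the claimed inequality is an equality there. For \(t_0<t_*\), Proposition~\ref{prop:polynomial-moments} gives \(A(t_0)<\infty\). Lemma~\ref{lem:tail-differentiation} then lets us differentiate \(A\) termwise on \([0,t_0]\) using \eqref{eq:tail-gain-loss}, and \(m'\) is given by \eqref{eq:mean-identity}. Once the differential inequality holds, \((\log A)'\leq\frac83(\log m)'\) together with \(A(0)=m(0)=1\) gives \eqref{eq:moment-comparison}.

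First, compute \(A'\) from \eqref{eq:tail-gain-loss}.

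The gain term is
\[
\sum_k k\sum_{u<k}q_uq_{k-u}=\sum_{u,v\geq1}(u+v)q_uq_v=2mA.
\]

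The loss term is \(\sum_k kq_k\sum_{u=2}^kq_u\). Since \(q_1=1\), this equals \(\sum_{u\leq k}kq_uq_k-A\). By symmetry,
\[
\sum_{u,k}\max\{u,k\}q_uq_k=2\sum_{u\leq k}kq_uq_k-\sum_k kq_k^2,
\]
so the loss equals \(\tfrac12\bigl(\sum_{u,k}\max\{u,k\}q_uq_k+\sum_kkq_k^2\bigr)-A\).

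Now apply Lemma~\ref{lem:decreasing-mass} to the normalised tails. Since \(k\mapsto q_k\) is non-increasing and \(m<\infty\), the sequence \((q_k/m)_{k\geq1}\) is a non-increasing probability mass function with mean \(A/m<\infty\). Taking \(K,K'\) i.i.d.\ with this law,
\[
\sum_{u,k}\max\{u,k\}q_uq_k=m^2\,\mathbb E[\max\{K,K'\}]\geq m^2\Bigl(\frac{4A}{3m}-\frac13\Bigr).
\]
Dropping the favourable term \(-\tfrac12\sum_kkq_k^2\), we obtain
\[
A'\leq 2mA+A-\frac23mA+\frac{m^2}{6}=\frac43mA+A+\frac{m^2}6.
\]

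It remains to compare this with
\[
\frac83\frac{A}{m}m'=\frac43mA+\frac83A-\frac43\frac Am\sum_kq_k^2 .
\]
Using \(\sum_kq_k^2\leq m\) (since \(q_k\leq1\)), the right-hand side is at least \(\tfrac43mA+\tfrac43A\). So it suffices to show \(A+\tfrac{m^2}{6}\leq\tfrac43A\), that is, \(A\geq m^2/2\). This follows from
\[
m^2\leq2\sum_{u\leq k}q_uq_k\leq2\sum_k kq_k=2A,
\]
where the last step uses \(q_u\leq1\).

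The step I expect to be the main obstacle is the bookkeeping of the loss term. It must be rewritten exactly as a maximum of two i.i.d.\ draws from the normalised tail law, with the \(u=1\) term and the diagonal handled correctly, so that the extremal inequality enters with the right constant. The remaining inequalities are crude, namely \(\sum q_k^2\leq m\) and \(A\geq m^2/2\), and they close with room to spare.
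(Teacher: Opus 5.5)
Your proof is correct and follows the paper's argument. It uses the same weighted summation \(A'=2mA-L+A-D\), and your loss-term bookkeeping reproduces this exactly. It then applies Lemma~\ref{lem:decreasing-mass} to \((q_k/m)_{k\geq1}\) and integrates \((\log A)'\leq\frac83(\log m)'\) from \(A(0)=m(0)=1\). The only cosmetic differences are that you drop the \(-\frac12\sum_k kq_k^2\) term and close with \(m^2\leq 2A\) instead of the paper's \(m^2\leq\mathbb E[X_t^2]=2A-m\); either suffices.
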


\begin{proof}
By Proposition~\ref{prop:polynomial-moments} and
Lemma~\ref{lem:tail-differentiation}, the weighted tail equation may be
summed term by term. Put
\[
        L:=\sum_{1\leq u<k}kq_uq_k,
        \qquad
        D:=\sum_{k\geq1}kq_k^2.
\]
Summing \eqref{eq:tail-system} gives
\begin{equation}
\label{eq:A-prime}
        A'=2mA-L+A-D.
\end{equation}
Let \(K,K'\) be independent with law
\((q_k/m)_{k \geq 1}\). Then
\[
        \mathbb{E}[K]=\frac{A}{m},
        \qquad
        m^2\mathbb{E}[\max\{K,K'\}]=2L+D.
\]
Lemma~\ref{lem:decreasing-mass} therefore yields
\begin{equation}
\label{eq:L-compensation}
        L
        \geq
        \frac23mA-\frac16m^2-\frac12D.
\end{equation}
Combining \eqref{eq:A-prime} and \eqref{eq:L-compensation},
\[
        A'
        \leq
        \frac43mA+\frac16m^2+A-\frac12D.
\]
Since \(m^2\leq\mathbb{E}[X_t^2]=2A-m\),
\[
        A'
        \leq
        \frac43mA+\frac43A-\frac16m-\frac12D
        \leq
        \frac43(m+1)A.
\]
By \eqref{eq:riccati-lower},
\[
        \frac{A'}A
        \leq\frac43(m+1)
        \leq\frac83\frac{m'}m.
\]
Since \(A(0)=m(0)=1\), integration proves
\eqref{eq:moment-comparison}.
\end{proof}

\begin{proposition}[Critical lower tail]
\label{prop:critical-lower-tail}
There exists \(c>0\) such that
\[
        q_k(t_*)=\mathbb{P}(X_{t_*}\geq k)\geq ck^{-2/3},
        \qquad k\geq1.
\]
In particular, \(\mathbb{E}[X_{t_*}] = \sum_{k\geq1}q_k(t_*)=\infty\).
\end{proposition}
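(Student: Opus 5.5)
We derive the tail bound from Proposition~\ref{prop:moment-comparison} by a truncation argument at levels $t<t_*$, and then pass to $t_*$ using monotonicity of $q_k$ in time. The idea is that $A(t)=\sum_k kq_k(t)$ controls how much of the mean $m(t)$ can sit on large $k$. Fix $t<t_*$ and $n\geq1$. Since $k\mapsto q_k(t)$ is non-increasing,
\[
        m(t)=\sum_{k<n}q_k(t)+\sum_{k\geq n}q_k(t)
        \leq (n-1)\,q_1(t)\wedge\ldots
\]
We do not bound the head this way; instead we split the mean as head plus tail:
\[
        m(t)\leq \sum_{k<n}q_k(t)+\frac1n\sum_{k\geq n}kq_k(t)
        \leq \sum_{k<n}q_k(t)+\frac{A(t)}{n}
        \leq \sum_{k<n}q_k(t)+\frac{m(t)^{8/3}}{n}.
\]

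Next we choose $n$ as a function of $m=m(t)$. Take $n=\lceil 2m^{5/3}\rceil$, so that $m^{8/3}/n\leq m/2$ and hence $\sum_{k<n}q_k(t)\geq m/2$. Because the sequence is non-increasing, this head sum is at most $\sum_{k<n}q_k(t)\leq (n-1)\max_k q_k$, which is useless on its own. We therefore need a lower bound on $q_j$ at a single index, and for that we use the head in a different way. For $j\leq n$ we have
\[
        \sum_{k<n}q_k\leq (j-1)+ (n-j)q_j,
\]
using $q_k\leq1$ for $k<j$ and $q_k\leq q_j$ for $k\geq j$. Choosing $j=\lfloor m/4\rfloor$ (assuming $m$ large) gives $(n-j)q_j\geq m/4$, hence $q_j(t)\geq m/(4n)\geq c\,m^{-2/3}\geq c' j^{-2/3}$.

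By Proposition~\ref{prop:first-moment}, $m(t)$ increases continuously to $\infty$ as $t\uparrow t_*$, so every sufficiently large integer $j$ equals $\lfloor m(t)/4\rfloor$ for some $t<t_*$. Since $q_j$ is non-decreasing in $t$, it follows that $q_j(t_*)\geq q_j(t)\geq c'j^{-2/3}$ for all large $j$. The finitely many small $j$ are handled by $q_j(t_*)\geq q_j(t)>0$ for $t$ close to $t_*$, which holds because $q_j$ becomes positive immediately for $t>0$ by \eqref{eq:tail-system}; for instance $q_2'=q_1(q_1-q_2)+q_2(1-q_2)$ equals $1$ at $t=0$, and induction does the rest. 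Alternatively, one may simply shrink $c$.

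The divergence $\sum_k q_k(t_*)=\infty$ then follows from the lower bound, since $\sum_k k^{-2/3}$ diverges. The main point to verify is the matching between $j$ and $n$, which relies only on continuity of $m$ on $[0,t_*)$; this continuity holds because $m$ is differentiable there by Lemma~\ref{lem:tail-differentiation}. No other step seems delicate.
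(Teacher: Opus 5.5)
Your proof is correct and follows the paper's structure: the compensation bound $A(t)\le m(t)^{8/3}$ yields $q_j(t)\gtrsim m(t)^{-2/3}$ at an index $j\asymp m(t)$, and continuity of $m$, the intermediate value theorem and monotonicity of $q_j$ in $t$ then transfer the bound to $t_*$. The only difference is the middle step. You derive the tail bound by hand, cutting the mean at $n\asymp m^{5/3}$ via $\sum_{k\ge n}q_k\le A/n$ and then using $\sum_{k<n}q_k\le (j-1)+(n-j)q_j$, where the paper applies Paley--Zygmund at level $m/2$. Both give the same second-moment estimate $\mathbb{P}(X_t\geq c\,m)\gtrsim m^2/\mathbb{E}[X_t^2]$. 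You can delete the abandoned first display, and you can drop the small-$j$ discussion by choosing $t$ with $m(t)=4j$ exactly, which covers every $j\geq 1$.
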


\begin{proof}
By Proposition~\ref{prop:moment-comparison},
\(\mathbb{E}[X_t^2]=2A(t)-m(t)\leq2m(t)^{8/3}\). Paley--Zygmund gives
\[
        \mathbb{P}\!\left(X_t\geq\frac12m(t)\right)
        \geq
        \frac14\frac{m(t)^2}{\mathbb{E}[X_t^2]}
        \geq\frac18m(t)^{-2/3}.
\]
By Proposition~\ref{prop:first-moment}, \(m\) is continuous on
\([0,t_*)\) and tends to infinity at \(t_*\). For every sufficiently large
integer \(n\), choose \(t_n<t_*\) such that \(m(t_n)=2n\). Monotonicity in
time gives
\[
        q_n(t_*)\geq q_n(t_n)\geq\frac18(2n)^{-2/3}.
\]
The triangular system implies inductively that \(q_n(t)>0\) for every
\(n\geq2\) and \(t>0\); decreasing the constant proves the estimate for
all \(n\geq1\).
\end{proof}

\begin{proposition}[No infinite avalanche at criticality]
\label{prop:critical-no-infinity}
At the susceptibility threshold,
\[
        q_\infty(t_*)=\mathbb{P}(X_{t_*}=\infty)=0.
\]
Thus $X_{t_*}<\infty$ almost surely.
\end{proposition}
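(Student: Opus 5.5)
The plan is to argue by contradiction. Suppose $\delta:=q_\infty(t_*)>0$. By Proposition~\ref{prop:tail-system}, each $q_k$ is continuous and non-decreasing in $t$, and $q_k(t)\downarrow q_\infty(t)$ as $k\to\infty$. For $t<t_*$ we have $q_\infty(t)=0$, since $m(t)<\infty$. In the Poisson picture of Proposition~\ref{prop:range-evolution}, a jump of $X$ to $\infty$ at level $s$ requires an independent copy with $X'_s=\infty$, which has probability $q_\infty(s)=0$ for $s<t_*$. The event $\{X_{t_*}=\infty\}$ is therefore, up to a null set, the event that $X_t\to\infty$ as $t\uparrow t_*$: the finite-valued process explodes exactly at $t_*$. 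Analytically, $\delta=\lim_{k}\int_0^{t_*}q_k'(s)\,ds$. It is therefore enough to show that $\int_0^{t_*}q_k'(s)\,ds\to0$ as $k\to\infty$, using only the bounds already established on $[0,t_*)$.

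The first step is a quantitative decay of the tails below criticality. Proposition~\ref{prop:moment-comparison} gives $\mathbb E[X_s^2]\le 2m(s)^{8/3}$, so $q_j(s)\le 2m(s)^{8/3}j^{-2}$. Combined with the gain bound \eqref{eq:q-prime-gain-bound}, splitting $g_k=\sum_{u<k}q_uq_{k-u}$ at $u=k/2$ and bounding one factor by $q_{\lceil k/2\rceil}$ gives $g_k(s)\le Cm(s)^{11/3}k^{-2}$. The second step changes variables from level to mean using \eqref{eq:riccati-lower}, namely $ds\le 2\,dm/m^2$. For the level $\tau_k$ with $m(\tau_k)=M_k$, this gives $\int_0^{\tau_k}g_k\,ds\le C M_k^{8/3}k^{-2}$, which tends to $0$ provided $M_k=o(k^{3/4})$. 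The remaining window $[\tau_k,t_*]$ must be handled differently. Here I would try to compare the loss term $q_k\sum_{u=2}^kq_u$ in \eqref{eq:tail-gain-loss} with the gain. On the contradiction hypothesis, $q_j(s)\ge\delta/2$ for all $j\le k$ once $s$ is close enough to $t_*$. Then $\sum_{u=2}^kq_u\ge\delta(k-1)/2$, so the loss dominates the part of the gain coming from pairs $(u,k-u)$ with both entries comparable to $k$. This should give $q_k'\lesssim q_{k/2}$-type contributions only from unbalanced pairs, and these are controlled by $2q_{\lceil k/2\rceil}\sum_{u\le k/2}q_u$ minus the loss.

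Concretely, for the window I would write the gain-minus-loss as
\[
\sum_{u=1}^{k-1}q_uq_{k-u}-q_k\sum_{u=2}^kq_u
=\sum_{u=1}^{k-1}q_u\,(q_{k-u}-q_k)+q_k(1-q_k),
\]
as in \eqref{eq:tail-system}, and sum over $k$ in a dyadic block $[K,2K)$. The differences $q_{k-u}-q_k$ then telescope in $k$, leaving boundary terms of the form $\sum_u q_u\,(q_{K-u}-q_{2K})$. On the event of a mass $\delta$ at infinity these tend to $0$ in the limit $K\to\infty$ uniformly in $s\le t_*$, since all $q_j$ with $j\ge K/2$ lie in $[\delta,\delta+o(1)]$ by Dini's theorem applied to the continuous decreasing sequence $q_j$ on the compact interval $[0,t_*]$. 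This would show that the average over $k\in[K,2K)$ of $q_k(t_*)-q_k(\tau)$ is $o(1)$, and hence $\delta=\lim_k q_k(t_*)=\lim_kq_k(\tau)=0$ for any fixed $\tau<t_*$, a contradiction.

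The main obstacle is this last window, where $m(s)\to\infty$. The crude bound $g_k\le 4m^2/k$ integrates to something divergent, so the cancellation between gain and loss in \eqref{eq:tail-gain-loss} must genuinely be used rather than discarded. The delicate point is obtaining uniformity in $s$ of $q_{K}(s)-q_{2K}(s)\to0$ up to and including $t_*$. If the Dini argument fails, for instance because uniform convergence on $[0,t_*]$ needs more than continuity of each $q_j$, I would fall back on the probabilistic formulation. I would bound the probability that $X$ crosses from below $K$ to above $2K$ during $[\tau,t_*)$ by the expected number of crossing jumps, which is $\int_\tau^{t_*}\sum_{x<K}p_x\sum_{j\le x}q_{2K-j}\,ds$. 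I would then try to show that this quantity is small using the same telescoping.
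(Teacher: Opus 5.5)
There is a genuine gap, and it sits in the final window $[\tau_k,t_*]$, which is where the whole difficulty lies.

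\textbf{The Dini step fails.} Under your contradiction hypothesis, the pointwise limit $q_\infty$ equals $0$ on $[0,t_*)$ and $\delta>0$ at $t_*$. It is therefore discontinuous, so Dini's theorem does not apply. In fact uniform convergence of the continuous functions $q_j$ on $[0,t_*]$ is then impossible. The assertion $q_j(s)\in[\delta,\delta+o(1)]$ for all $s\leq t_*$ already fails at $s=0$.

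\textbf{The telescoping is miscomputed.} For $u<K$,
\[
\sum_{k=K}^{2K-1}(q_{k-u}-q_k)=\sum_{i=0}^{u-1}\bigl(q_{K-u+i}-q_{2K-u+i}\bigr)
\]
has $u$ terms. The boundary contribution therefore carries weights $uq_u$, an $A$-type sum that blows up near $t_*$. For $u\geq K$ the differences $q_{k-u}-q_k$ involve $q_1,q_2,\ldots$ and are of order one. These unbalanced pairs, a large graft rooted near the origin, are the main crossing mechanism, not a remainder that the loss absorbs.

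\textbf{The remaining steps do not close the gap.} For a fixed $\tau<t_*$, showing $q_k(t_*)-q_k(\tau)=o(1)$ is just the claim restated. The fallback is only a sketch, and the obvious first-moment bound on crossing jumps over the window is $O(1)$, not $o(1)$.

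\textbf{The missing idea} is to work on a window of length exactly $C/k$ below $t_*$.
\begin{enumerate}
\item Integrating $(1/m)'\leq-1/2$ gives $m(t)\leq 2/(t_*-t)$. At $t_k:=t_*-C/k$, monotonicity in $k$ then gives $kq_k(t_k)\leq m(t_k)$, that is $q_k(t_k)\leq 2/C$. This Markov bound makes your first step unnecessary.
\item On $[t_k,t_*]$, bound $q_u\leq 1$ and $q_{k-u}(s)\leq q_{k-u}(t_*)$ in the gain, but keep the loss at the current level. This gives the linear relaxation inequality
\[
q_k'(s)\leq B_k-(k-1)q_k(s),\qquad B_k:=1+\sum_{j=1}^{k-1}q_j(t_*).
\]
Its equilibrium $B_k/(k-1)$ tends to $q_\infty(t_*)$ by Ces\`aro.
\item Integrating over the window gives $q_k(t_*)\leq e^{-C(k-1)/k}q_k(t_k)+\frac{B_k}{k-1}\bigl(1-e^{-C(k-1)/k}\bigr)$. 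Hence $\liminf_k q_k(t_k)\geq q_\infty(t_*)$, and so $q_\infty(t_*)\leq 2/C$ for every $C>0$.
\end{enumerate}

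Your observation that the loss $q_k\sum_{u\leq k}q_u$ is of order $\delta k$ is the right instinct. However, by Markov ($q_k(s)\leq 2/(k(t_*-s))$) it can only hold on a window of length $O(1/k)$, which is exactly this scale. Your window $[\tau_k,t_*]$ has length of order $1/M_k\gg k^{-3/4}$, and on it the relaxation factor $e^{-(k-1)(t_*-\tau_k)}$ vanishes, so such an inequality carries no information.
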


Let us point this continuity statement is not logically needed to derive the 
equality of one-sided critical thresholds in Proposition~\ref{prop:equality-critical-times}.

\begin{proof}
Set $a:=q_\infty(t_*)$. Since by \eqref{eq:riccati-lower} $m'\geq m^2/2$ on $[0,t_*)$ and
$m(t)\uparrow\infty$ as $t\uparrow t_*$, integration of
$(1/m)'\leq-1/2$ gives
\begin{equation}
\label{eq:critical-mean-upper}
        m(t)\leq\frac{2}{t_*-t},
        \qquad t<t_*.
\end{equation}
Fix $C>0$, and for all sufficiently large $n$ set
$t_n:=t_*-C/n$. Since \(k\mapsto q_k(t_n)\) is non-increasing, 
$nq_n(t_n)\leq m(t_n)\leq2n/C$, and hence
\begin{equation}
\label{eq:critical-diagonal-upper}
        q_n(t_n)\leq\frac{2}{C}.
\end{equation}

We claim that $q_n(t_n)\to a$. For $s\in[t_n,t_*]$, one has
$q_{n-u}(s)\geq q_n(s)$, while monotonicity in the level gives
$q_{n-u}(s)\leq q_{n-u}(t_*)$. Hence
\eqref{eq:tail-system}, $q_u(s)\leq1$, and
$q_n(s)(1-q_n(s))\leq1$ give
\[
\begin{aligned}
        q_n'(s)
        &\leq
        \sum_{u=1}^{n-1}\bigl(q_{n-u}(t_*)-q_n(s)\bigr)+1\\
        &=B_n-(n-1)q_n(s),
        \qquad
        B_n:=1+\sum_{j=1}^{n-1}q_j(t_*).
\end{aligned}
\]
Writing $b_n:=B_n/(n-1)$ and integrating this scalar differential
inequality from $t_n$ to $t_*$ yields
\[
        q_n(t_*)
        \leq
        e^{-(n-1)C/n}q_n(t_n)
        +b_n\bigl(1-e^{-(n-1)C/n}\bigr).
\]
Now $q_n(t_*)\to a$, while $b_n\to a$ by Ces\`aro convergence.
Rearranging the preceding inequality gives
$\liminf_n q_n(t_n)\geq a$. Conversely,
$q_n(t_n)\leq q_n(t_*)$, so $\limsup_n q_n(t_n)\leq a$.
Thus $q_n(t_n)\to a$.

Combining this convergence with \eqref{eq:critical-diagonal-upper}
gives $a\leq2/C$. Since $C>0$ is arbitrary, $a=0$. 
\end{proof}

\section{Frozen critical dynamics}
\label{sec:frozen-dynamics}

We first isolate the explosion mechanism. Let \(Q=(Q_k)_{k\geq1}\) be the
tail of a \(\N\)-valued random variable \(Y\), and let \(Z\) be the
minimal increasing process which, from \(x<\infty\), updates at rate
\(x+1\) and moves to
\[
        \max\{x,U_x+Y'\},
\]
where \(U_x\) is uniform on \(\{0,\ldots,x\}\) and \(Y'\) is an independent
copy of \(Y\). Write \(\zeta:=\inf\{t:Z_t=\infty\}\).
Here, the minimal process means the usual pure-jump process
constructed up to its explosion time from the stated state-dependent
holding rates and independent graft marks, with \(\infty\) made absorbing.

\begin{proposition}[Frozen heavy-tail explosion]
\label{prop:frozen-heavy-tail}
Suppose that \(Q_k\geq ck^{-\alpha}\) for some \(c>0\) and
\(\alpha<1\). If \(Z_0\) has the law of \(Y\), then
\[
        \mathbb{P}(\zeta\leq s)>0,\qquad s>0.
\]
\end{proposition}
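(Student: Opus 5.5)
Since \(Z\) is non-decreasing, we study the passage levels \(\tau_j:=\inf\{t:Z_t\geq 2^j\}\) and bound the expected time the process spends below each dyadic scale. The key observation is that from any state \(x\) the jump rate is \(x+1\), and each proposed update lands at or above \(2^{j+1}\) with probability at least \(\mathbb P(Y'\geq 2^{j+1})\geq c\,2^{-(j+1)\alpha}\), because \(U_x\geq0\) and the new value is \(\max\{x,U_x+Y'\}\geq Y'\). A sharper bound uses the location: with probability at least \(1/2\), \(U_x\geq x/2\).

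Fix \(j\) and suppose \(2^j\leq Z_t<2^{j+1}\). The rate of jumping to \([2^{j+1},\infty]\) is at least
\[
(x+1)\,\mathbb P\bigl(Y'\geq 2^{j+1}\bigr)\geq 2^j\,c\,2^{-(j+1)\alpha}=c'\,2^{j(1-\alpha)}.
\]
This bound holds uniformly in the current state within the scale. The rate is a stopping-time/compensator statement for the minimal jump process, and it is legitimate because the rates are bounded below \(2^{j+1}\). It follows that the time spent in \([2^j,2^{j+1})\) is stochastically dominated by an exponential variable of rate \(c'2^{j(1-\alpha)}\), possibly zero if the scale is skipped. A jump may overshoot several scales at once, which only helps. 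Set \(j_0\) with \(2^{j_0}\leq Z_0\). Then
\[
\mathbb E\bigl[\zeta-\tau_{j_0}\bigr]\leq\sum_{j\geq j_0}\frac{1}{c'}2^{-j(1-\alpha)}=:\varepsilon(j_0),
\]
and \(\varepsilon(j_0)\to0\) as \(j_0\to\infty\) since \(\alpha<1\). In particular \(\zeta<\infty\) almost surely, by the usual argument that the minimal process reaches \(\infty\) exactly at \(\sup_j\tau_j\), and \(\sup_j\tau_j\) is finite in expectation.

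To get explosion before an arbitrary \(s>0\), use the initial law: \(Z_0\sim Y\) gives \(\mathbb P(Z_0\geq 2^{j_0})=Q_{2^{j_0}}>0\) for all \(j_0\). The lower bound \(Q_k\geq ck^{-\alpha}>0\) ensures \(Y\) is unbounded. Choose \(j_0\) with \(\varepsilon(j_0)\leq s/2\). On \(\{Z_0\geq 2^{j_0}\}\) we have \(\tau_{j_0}=0\). The domination above was conditional on the path, so Markov's inequality applies conditionally: \(\mathbb P(\zeta>s\mid Z_0\geq 2^{j_0})\leq 1/2\). Hence
\[
\mathbb P(\zeta\leq s)\geq \tfrac12 Q_{2^{j_0}}>0.
\]

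The only delicate step is the rigorous domination of the sojourn times in each scale for the minimal process near explosion. I would handle it by the standard construction via embedded jump chain and independent exponential holding times. In each holding period at state \(x\in[2^j,2^{j+1})\), the probability of exiting the scale upward is at least \(\mathbb P(Y'\geq 2^{j+1})\), so the number of holding periods in the scale is dominated by a geometric variable. Wald's identity then bounds the expected sojourn by
\[
\frac{1}{2^j\,\mathbb P(Y\geq 2^{j+1})}.
\]
Summing these bounds over \(j\geq j_0\) gives \(\varepsilon(j_0)\), as used above.
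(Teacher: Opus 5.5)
Your proposal is correct and follows essentially the same route as the paper. Both arguments bound the expected sojourn in each dyadic scale by \(1/(\text{scale}\cdot Q_{2\,\text{scale}})\), using that the new value \(\max\{x,U_x+Y'\}\) dominates the graft \(Y'\), and both sum these bounds using \(\alpha<1\). Both then apply Markov's inequality uniformly over starting points at or above a large level, which \(Y\) reaches with positive probability. The only differences are cosmetic: you use absolute scales \([2^j,2^{j+1})\) instead of the paper's \([2^iK,2^{i+1}K)\), and you justify the sojourn bound explicitly through the embedded jump chain and Wald's identity, a step the paper leaves implicit.
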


\begin{proof}
Fix \(x\geq1\). While \(x\leq Z_t<2x\), proposed updates whose graft
satisfies \(Y'\geq2x\) occur at rate at least \(xQ_{2x}\), and every such
update reaches level \(2x\). Uniformly in \(z\in[x,2x)\),
\begin{equation}
\label{eq:dyadic-crossing}
        \mathbb{E}_z[\tau_{2x}]
        \leq\frac1{xQ_{2x}}
        \leq Cx^{\alpha-1},
        \qquad
        \tau_{2x}:=\inf\{t:Z_t\geq2x\}.
\end{equation}

For \(K\geq1\) and \(z\geq K\), choose \(j\geq0\) such that
\(2^jK\leq z<2^{j+1}K\). Applying \eqref{eq:dyadic-crossing} first on
\([2^jK,2^{j+1}K)\) and then successively at the higher dyadic scales, the
strong Markov property gives
\[
        \mathbb{E}_z[\zeta]
        \leq
        C\sum_{i\geq j}(2^iK)^{\alpha-1}
        \leq C'K^{\alpha-1},
\]
uniformly in \(z\geq K\).
The series converges because \(\alpha<1\). 
Given \(s>0\), choose \(K\) so large that
\(C'K^{\alpha-1}\leq s/2\). Markov's inequality yields
\[
        \inf_{z\geq K}\mathbb{P}_z(\zeta\leq s)\geq\frac12,
\]
and therefore, since $Z_0$ is distributed as $Y$,
\[
        \mathbb{P}(\zeta\leq s)
        \geq
        \mathbb{P}(Y\geq K)\inf_{z\geq K}\mathbb{P}_z(\zeta\leq s)
        \geq\frac12Q_K>0.
\]
\end{proof}

Let \(Q_k:=q_k(t_*)\). By
Proposition~\ref{prop:critical-no-infinity}, \(Q_k\downarrow0\), so there is
a \(\N\)-valued random variable \(Y\) with tail
\(\mathbb{P}(Y\geq k)=Q_k\). Let \(Z\) be the corresponding frozen process,
started from \(Y\), and write \(r_k(s):=\mathbb{P}(Z_s\geq k)\).

\begin{proposition}[Postcritical domination]
\label{prop:postcritical-domination}
For every \(k\geq1\) and \(s\geq0\),
\[
        q_k(t_*+s)\geq r_k(s).
\]
Consequently,
\[
        q_\infty(t_*+s)\geq\mathbb{P}(\zeta\leq s).
\]
\end{proposition}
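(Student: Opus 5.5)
The plan is to compare the two tail systems by a triangular induction on \(k\), in the spirit of Proposition~\ref{prop:tail-system}. After level \(t_*\), the true range process starts from the law of \(X_{t_*}\), which is the law of \(Y\), and so coincides with the law of \(Z_0\). Its grafts at level \(t_*+s\) have tails \(q_k(t_*+s)\geq q_k(t_*)=Q_k\), by monotonicity in the level. So the real process runs the frozen dynamics with stochastically larger grafts, and the comparison should go in the stated direction.

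\emph{Step 1: the frozen tail equation.} Repeating the compensator computation of Proposition~\ref{prop:tail-system} for \(Z\) gives, for each fixed \(k\geq2\), that \(r_k\) is \(C^1\) with \(r_1\equiv1\) and \(r_k(0)=Q_k\). This is legitimate because below level \(k\) the rates are bounded by \(k\). The equation is \(r_k'=\Phi_k(r_1,\dots,r_k;Q)\), where
\[
        \Phi_k(r;Q):=\sum_{x=1}^{k-1}(r_x-r_{x+1})S_x(Q),
        \qquad S_x(Q):=\sum_{j=0}^{x}Q_{k-j}.
\]
The same computation for \(X\), started at level \(t_*\), shows that \(\tilde q_k(s):=q_k(t_*+s)\) satisfies \(\tilde q_k'=\Phi_k(\tilde q;\tilde q(s))\) with \(\tilde q_k(0)=Q_k\). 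This is just \eqref{eq:tail-system} before the rearrangement.

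\emph{Step 2: monotonicity of \(\Phi_k\).} The coefficients satisfy \(r_x-r_{x+1}\geq0\) whenever \(r\) is a tail, and each \(S_x\) is non-decreasing in \(Q\). Hence \(\Phi_k(\tilde q;\tilde q)\geq\Phi_k(\tilde q;Q)\). Summation by parts gives
\[
        \Phi_k(r;Q)=r_1S_1+\sum_{x=2}^{k-1}r_xQ_{k-x}-r_kS_{k-1}.
\]
With \(Q\) fixed, this is non-decreasing in \(r_2,\dots,r_{k-1}\) and affine, hence Lipschitz, in \(r_k\). This is the quasi-monotone structure needed for comparison.

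\emph{Step 3: induction.} The case \(k=1\) is trivial. Assume \(\tilde q_j\geq r_j\) on \([0,\infty)\) for all \(j<k\). Then
\[
        \tilde q_k'\geq\Phi_k(\tilde q_1,\dots,\tilde q_{k-1},\tilde q_k;Q)
        \geq\Phi_k(r_1,\dots,r_{k-1},\tilde q_k;Q).
\]
Meanwhile \(r_k'=\Phi_k(r_1,\dots,r_{k-1},r_k;Q)\), and both start from \(Q_k\). The standard scalar comparison for equations Lipschitz in the unknown, for instance via Gronwall applied to \((r_k-\tilde q_k)^+\), gives \(\tilde q_k\geq r_k\). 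This proves the first claim.

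\emph{Step 4: the consequence.} Let \(k\to\infty\) in the first claim. Since \(\infty\) is absorbing for the minimal process, \(\lim_k r_k(s)=\mathbb P(Z_s=\infty)=\mathbb P(\zeta\leq s)\). The left side tends to \(q_\infty(t_*+s)\).

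I expect the main obstacle to be Step 1 for \(Z\), and in particular its interplay with explosion. One must check that \(\mathbb P(Z_s\geq k)\) is governed only by the dynamics below level \(k\), so that the compensator argument for the stopped process \(Z_{s\wedge\tau_k}\) applies despite unbounded rates and possible explosion. I would handle this exactly as in Proposition~\ref{prop:tail-system}, working with \(Z\) stopped at \(\tau_k\), where the rates are bounded by \(k\).
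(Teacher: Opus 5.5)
Your proposal is correct and follows the paper's own proof. Both derive the frozen tail system by the same bounded-rate compensator calculation and lower-bound the true derivative by freezing the graft tails at $Q$. Both then induct on $k$ via a linear scalar comparison with coefficient $\sum_{i\leq k}Q_i$. Your unrearranged form $\sum_x(r_x-r_{x+1})S_x(Q)$, with its summation by parts, is only a rewriting of the paper's $\sum_{u}Q_u(r_{k-u}-r_k)+Q_k(1-r_k)$.
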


\begin{proof}
The frozen tails satisfy, for \(k\geq2\),
\begin{equation}
\label{eq:frozen-tail-system}
        r_k'
        =
        \sum_{u=1}^{k-1}Q_u(r_{k-u}-r_k)
        +Q_k(1-r_k),
        \qquad r_k(0)=Q_k.
\end{equation}
Indeed, this is the same crossing calculation as in
Proposition~\ref{prop:tail-system}, with the graft law frozen
at \(Q\).

Set \(\widetilde q_k(s):=q_k(t_*+s)\). Since
\(\widetilde q_u(s)\geq Q_u\),
\(\widetilde q_{k-u}-\widetilde q_k\geq0\), and
\(1-\widetilde q_k\geq0\), equation~\eqref{eq:tail-system} gives
\[
        \widetilde q_k'
        \geq
        \sum_{u=1}^{k-1}Q_u
        (\widetilde q_{k-u}-\widetilde q_k)
        +Q_k(1-\widetilde q_k).
\]
We prove \(\widetilde q_k\geq r_k\) by induction on \(k\). It is immediate
for \(k=1\). Assuming it holds below level \(k\), put
\(d_k:=\widetilde q_k-r_k\). Subtracting
\eqref{eq:frozen-tail-system},
\[
        d_k'
        +
        \left(\sum_{u=1}^{k-1}Q_u+Q_k\right)d_k
        \geq
        \sum_{u=1}^{k-1}Q_ud_{k-u}
        \geq0,
        \qquad d_k(0)=0.
\]
Integrating this inequality gives \(d_k\geq0\). Letting \(k\to\infty\) proves
the second assertion.
\end{proof}

\begin{proposition}[Equality of one-sided critical thresholds]
\label{prop:equality-critical-times}
One has \(t_{\mathrm{sus}}^{\mathrm{os}}=t_{\infty}^{\mathrm{os}}\), and
\[
        \mathbb{P}(X_{t_*+s}=\infty)>0,\qquad s>0.
\]
\end{proposition}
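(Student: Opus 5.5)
The statement follows by combining the three results just established: Proposition~\ref{prop:critical-lower-tail}, Proposition~\ref{prop:frozen-heavy-tail} and Proposition~\ref{prop:postcritical-domination}. The critical tail supplies the heavy-tail hypothesis for the frozen process. The frozen process explodes in arbitrarily small time. The domination transfers that explosion to the true range process.

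First, set \(Q_k:=q_k(t_*)\). By Proposition~\ref{prop:critical-lower-tail}, \(Q_k\geq ck^{-2/3}\) for all \(k\geq1\), so the hypothesis of Proposition~\ref{prop:frozen-heavy-tail} holds with \(\alpha=2/3<1\).

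Second, we need \(Q\) to be the tail of a genuine \(\N\)-valued random variable \(Y\), since the frozen process is defined with grafts drawn from \(Y\). This is exactly Proposition~\ref{prop:critical-no-infinity}: \(Q_k\downarrow q_\infty(t_*)=0\). Moreover \(Q_1=1\), so \(Y\geq1\).

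Proposition~\ref{prop:frozen-heavy-tail}, applied to the frozen process \(Z\) started from \(Y\), then gives \(\mathbb P(\zeta\leq s)>0\) for every \(s>0\). By Proposition~\ref{prop:postcritical-domination}, for every \(s>0\),
\[
        \mathbb P(X_{t_*+s}=\infty)=q_\infty(t_*+s)\geq\mathbb P(\zeta\leq s)>0 .
\]

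For the threshold equality: the bound just obtained shows \(q_\infty(t)>0\) for every \(t>t_*\), hence \(t_\infty^{\mathrm{os}}\leq t_*\). The reverse inequality \(t_{\mathrm{sus}}^{\mathrm{os}}\leq t_\infty^{\mathrm{os}}\) was already noted before Theorem~\ref{thm:range-criticality}, since positive mass at infinity forces an infinite mean. Therefore \(t_{\mathrm{sus}}^{\mathrm{os}}=t_\infty^{\mathrm{os}}\).

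The only point needing care is the one the remark after Proposition~\ref{prop:critical-no-infinity} alludes to: that proposition is not logically required for the equality. If one wishes to avoid it, there are two routes.

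\begin{itemize}
\item Allow \(Y\) to take the value \(\infty\) with probability \(q_\infty(t_*)\). The frozen explosion argument and the domination then go through verbatim, with grafts equal to \(\infty\) sending \(Z\) to \(\infty\).
\item Observe directly that if \(q_\infty(t_*)>0\), then \(t_\infty^{\mathrm{os}}\leq t_*\) already holds.
\end{itemize}

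Either route leaves no real obstacle: the heavy lifting is in the three propositions being combined.
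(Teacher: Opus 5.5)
Your proof is correct and is essentially the paper's argument: Proposition~\ref{prop:critical-no-infinity} makes $Q_k=q_k(t_*)$ a genuine tail, and Proposition~\ref{prop:critical-lower-tail} supplies $\alpha=2/3$. Propositions~\ref{prop:frozen-heavy-tail} and~\ref{prop:postcritical-domination} then give $q_\infty(t_*+s)\geq\mathbb P(\zeta\leq s)>0$, which together with $t_{\mathrm{sus}}^{\mathrm{os}}\leq t_\infty^{\mathrm{os}}$ yields the equality. Your second route for avoiding Proposition~\ref{prop:critical-no-infinity} (if $q_\infty(t_*)>0$, conclude by monotonicity in the level) is the same case split the paper itself uses in Proposition~\ref{prop:restarted-one-sided}.
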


\begin{proof}
We already know that
\(t_{\mathrm{sus}}^{\mathrm{os}}\leq t_{\infty}^{\mathrm{os}}\).
By Proposition~\ref{prop:critical-no-infinity}, $X_{t_*}<\infty$ almost
surely, while Proposition~\ref{prop:critical-lower-tail} gives
\(Q_k\geq ck^{-2/3}\). Hence Propositions~\ref{prop:frozen-heavy-tail}
and~\ref{prop:postcritical-domination}, with \(\alpha=2/3\), imply that
for every \(s>0\),
\[
        q_\infty(t_*+s)\geq\mathbb{P}(\zeta\leq s)>0.
\]
Thus \(t_{\infty}^{\mathrm{os}}\leq t_*+s\) for every \(s>0\), hence
\(t_{\infty}^{\mathrm{os}}\leq t_*=t_{\mathrm{sus}}^{\mathrm{os}}\). The same estimate gives the
strictly postcritical explosion assertion.
\end{proof}

\begin{remark}
\label{rem:tail-explosion-criterion}
The frozen argument only requires a lower tail \(Q_k\geq ck^{-\alpha}\)
with \(\alpha<1\). The exponent \(2/3\) enters through the compensation
estimate of Proposition~\ref{prop:moment-comparison}, not through the
explosion mechanism.
\end{remark}

For the isotropic comparison we only need to restart the tail equation,
rather than the nonlinear process itself. The following elementary
observation records that the triangular system preserves the class of
tail sequences.

\begin{lemma}[Tail flow from general initial data]
\label{lem:tail-flow-general-data}
Let $\mu$ be a probability law on $\N$. There exists a unique family
$q^\mu=(q_k^\mu)_{k\geq1}$ such that $q_1^\mu\equiv1$,
$q_k^\mu\in C^1([0,\infty))$ for $k\geq2$,
$q_k^\mu(0)=\mu([k,\infty))$, and
\begin{equation}
\label{eq:general-tail-flow}
        (q_k^\mu)'
        =
        \sum_{u=1}^{k-1}q_u^\mu
        (q_{k-u}^\mu-q_k^\mu)
        +q_k^\mu(1-q_k^\mu),
        \qquad k\geq2.
\end{equation}
For every $t\geq0$,
\[
        1=q_1^\mu(t)\geq q_2^\mu(t)\geq\cdots\geq0,
\]
and each $q_k^\mu$ is non-decreasing in $t$. Consequently,
$q^\mu(t)$ is the tail of a unique probability law $\mu_t$ on
$\N\cup\{\infty\}$, with
\[
        \mu_t(\{\infty\})
        =
        q_\infty^\mu(t)
        :=
        \lim_{k\to\infty}q_k^\mu(t),
\]
and $(\mu_t)_{t\geq0}$ is non-decreasing in stochastic order.
\end{lemma}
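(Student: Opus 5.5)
Existence and uniqueness follow from the triangular structure, solved one level at a time; the remaining properties then come from a comparison argument in the same spirit as the proof of Proposition~\ref{prop:postcritical-domination}.

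\emph{Existence and uniqueness.} I would proceed by induction on $k$. Suppose $q_1^\mu\equiv1,q_2^\mu,\ldots,q_{k-1}^\mu$ are already constructed as continuous functions on $[0,\infty)$. Then \eqref{eq:general-tail-flow} at level $k$ is a scalar equation
\[
y'=f_k(t)-g_k(t)y-y^2,
\qquad
f_k:=\sum_{u=1}^{k-1}q_u^\mu q_{k-u}^\mu,\quad
g_k:=\sum_{u=1}^{k-1}q_u^\mu-1 .
\]
Its right-hand side is continuous in $t$ and locally Lipschitz in $y$, so Picard--Lindelöf gives a unique local $C^1$ solution with $y(0)=\mu([k,\infty))$. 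To rule out blow-up, it suffices to confine $y$ to $[0,1]$. Global existence and uniqueness at every level then follow.

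\emph{Ordering.} Next I would prove, by induction on $k$, that $0\le q_k^\mu\le q_{k-1}^\mu$ on $[0,\infty)$; this is the main step. The lower barrier is direct: at $y=0$ the right-hand side equals $f_k\ge0$ by the inductive positivity, so $y$ cannot become negative. For the upper barrier, set $d:=q_{k-1}^\mu-q_k^\mu$, with $d(0)\ge0$ because $\mu$ has non-increasing tails. Subtracting the equations at levels $k-1$ and $k$ gives
\[
d'=-c(t)\,d+h(t),
\]
where $c$ is continuous and
\[
h=\sum_{u=1}^{k-2}q_u^\mu\bigl(q_{k-1-u}^\mu-q_{k-u}^\mu\bigr)\;\ge\;0 .
\]
The sign of $h$ follows from the inductive ordering at lower levels. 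Concretely, one writes
\[
\sum_{u=1}^{k-1}q_u^\mu\,(q_{k-u}^\mu-q_k^\mu)
=\sum_{u=1}^{k-2}q_u^\mu q_{k-u}^\mu+q_{k-1}^\mu-\sum_{u=1}^{k-1}q_u^\mu\,q_k^\mu,
\]
and collects the terms linear in $d$ into $c\,d$. This bookkeeping is the step where I expect care to be needed, and I would check it on $k=2,3$ first. Variation of constants then gives $d\ge0$, hence $q_k^\mu\le q_{k-1}^\mu\le1$. The same bound also closes the a priori estimate used for global existence.

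\emph{Monotonicity in $t$, and the law $\mu_t$.} Once the ordering holds, every term on the right-hand side of \eqref{eq:general-tail-flow} is non-negative, since $q_{k-u}^\mu\ge q_k^\mu$ and $q_k^\mu\le1$. Hence $(q_k^\mu)'\ge0$. For fixed $t$, the sequence $k\mapsto q_k^\mu(t)$ is non-increasing, lies in $[0,1]$ and starts at $q_1^\mu=1$. It is therefore the tail of a unique law $\mu_t$ on $\N\cup\{\infty\}$, given by
\[
\mu_t(\{k\})=q_k^\mu(t)-q_{k+1}^\mu(t),
\qquad
\mu_t(\{\infty\})=\lim_{k\to\infty}q_k^\mu(t).
\]
Finally, since each tail $q_k^\mu(t)$ is non-decreasing in $t$, the family $(\mu_t)_{t\ge0}$ is non-decreasing in stochastic order.
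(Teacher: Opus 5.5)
Your overall plan is the paper's: a level-by-level scalar construction, then an inductive ordering proof through the gap $d=q_{k-1}-q_k$ with an integrating factor, then monotonicity in $t$ from the sign of every term. However, the bookkeeping step you flagged is wrong as written, and it is the heart of the lemma. Suppressing the superscript $\mu$ and subtracting the equations at levels $k-1$ and $k$, one finds, with your $h$,
\[
d'=h-d\sum_{u=1}^{k-1}q_u-q_k(1-q_k).
\]
The last term cannot be absorbed into $c\,d$. At $d=0$ it equals $-q_{k-1}(1-q_{k-1})$, which is strictly negative whenever $0<q_{k-1}<1$. The case $k=2$ hides this, because there $1-q_2=d$. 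At $k=3$ one gets $d'=(1-q_2)^2-(2q_2+q_3)\,d$, not $(1-q_2)-c\,d$. So the true forcing term is $h-q_k(1-q_k)$. Its sign does not follow from the non-negativity of the differences $q_{k-1-u}-q_{k-u}$ alone: the loss has to be compensated by the gain, and your argument does not do this.

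The missing ingredient is the telescoping identity $\sum_{u=1}^{k-2}(q_{k-1-u}-q_{k-u})=q_1-q_{k-1}=1-q_{k-1}$. It must be combined with the inductive ordering applied to the weights, namely $q_u\geq q_{k-1}$ for $u\leq k-2$, and not only to the differences. Since $q_{k-1}(1-q_{k-1})-q_k(1-q_k)=d\,(1-q_{k-1}-q_k)$, this gives the exact identity
\[
d'=\sum_{u=1}^{k-2}(q_u-q_{k-1})(q_{k-1-u}-q_{k-u})-d\Bigl(\sum_{u=2}^{k-1}q_u+q_{k-1}+q_k\Bigr).
\]
Its forcing term is non-negative under the induction hypothesis, and your variation-of-constants step then goes through. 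This is exactly the paper's argument, written there for $D_k=q_k-q_{k+1}$ as $\sum_{u=1}^{k-1}q_uD_{k-u}\geq q_k\sum_{j=1}^{k-1}D_j=q_k(1-q_k)$.

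Global existence also does not need to wait for the ordering. At $y=0$ the right-hand side is $f_k\geq0$, and at $y=1$ it is $\sum_{u=1}^{k-1}q_u(q_{k-u}-1)\leq0$, so $[0,1]$ is invariant directly, as in the paper. As you have it, the upper bound $y\leq1$ rests on the flawed ordering step, so your existence argument inherits the gap.
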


\begin{proof}
Construct the coordinates recursively. Once
$q_1^\mu,\ldots,q_{k-1}^\mu$ are known, the equation for $q_k^\mu$ is the
scalar equation $x'=F_k(t,x)$, where
\[
        F_k(t,x)
        :=
        \sum_{u=1}^{k-1}q_u^\mu(t)
        (q_{k-u}^\mu(t)-x)+x(1-x).
\]
It is locally Lipschitz in $x$, while
$F_k(t,0)\geq0$ and $F_k(t,1)\leq0$. Hence $[0,1]$ is invariant, the
solution is global, and uniqueness follows recursively in $k$.

It remains to prove monotonicity in the level. We suppress the superscript
$\mu$ and set $D_k:=q_k-q_{k+1}$. Since $0\leq q_2\leq1$, one has
$D_1\geq0$. Suppose inductively that $D_j\geq0$ for $j<k$. Subtracting
the equations at levels $k$ and $k+1$ gives
\[
        D_k'
        =
        \sum_{u=1}^{k-1}q_uD_{k-u}
        -q_k
        -D_k\sum_{u=2}^{k}q_u
        +q_{k+1}^2.
\]
The induction hypothesis gives \(q_u\geq q_k\) for \(u<k\) and
\(\sum_{j=1}^{k-1}D_j=1-q_k\). Therefore
\[
\begin{aligned}
D_k'
&\geq
q_k(1-q_k)-q_k
-D_k\sum_{u=2}^{k}q_u+q_{k+1}^2 \\
&=
-D_k\left(
\sum_{u=2}^{k}q_u+q_k+q_{k+1}\right).
\end{aligned}
\]
Moreover,
\(D_k(0)=\mu(\{k\})\geq0\). An integrating factor therefore yields
\(D_k(t)\geq0\) for all \(t\), completing the induction.

Thus \(q_{k-u}\geq q_k\) for \(1\leq u<k\). Since also
\(0\leq q_u,q_k\leq1\), every term on the right-hand side of
\eqref{eq:general-tail-flow} is non-negative, and hence each
\(q_k^\mu\) is non-decreasing in \(t\).

Finally, setting
$\mu_t(\{k\})=q_k^\mu(t)-q_{k+1}^\mu(t)$ and
$\mu_t(\{\infty\})=q_\infty^\mu(t)$ defines a probability law by
telescoping, and its tail is $q^\mu(t)$. The last assertion follows from
the monotonicity of the tails in $t$.
\end{proof}

\begin{proposition}[Restarted one-sided sharpness]
\label{prop:restarted-one-sided}
Let $\mu$ be a probability law with finite support in $\N$, and let
$q^\mu$ be the tail flow of Lemma~\ref{lem:tail-flow-general-data}. Set
\[
        m_\mu(t):=\sum_{k\geq1}q_k^\mu(t),
        \qquad
        \tau_\mu:=\sup\{t\geq0:m_\mu(t)<\infty\}.
\]
Then
\begin{equation}
\label{eq:restarted-susceptibility-bound}
        \tau_\mu
        \leq
        2\log\left(1+\frac1{m_\mu(0)}\right),
\end{equation}
and
\begin{equation}
\label{eq:restarted-postcritical-explosion}
        q_\infty^\mu(\tau_\mu+s)>0,
        \qquad s>0.
\end{equation}
\end{proposition}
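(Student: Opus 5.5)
The plan is to repeat the one-sided argument of Sections~\ref{sec:tail-evolution}--\ref{sec:frozen-dynamics} for the restarted flow $q^\mu$, using only the triangular system \eqref{eq:general-tail-flow} and the monotonicity in Lemma~\ref{lem:tail-flow-general-data}.

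\emph{Mean identity and the bound on $\tau_\mu$.} Since $\mu$ has finite support, $m_\mu(0)<\infty$, and $A_\mu(0):=\sum_k kq^\mu_k(0)<\infty$. From \eqref{eq:general-tail-flow} we still have $0\le (q_k^\mu)'\le g_k$. So Lemma~\ref{lem:tail-differentiation} and Proposition~\ref{prop:polynomial-moments} carry over verbatim on $[0,\tau_\mu)$. The only change is the initial value in the Gr\"onwall bound, $A_n(t)\le A_\mu(0)\exp\{2\int_0^t m_\mu\}$. Summing as in Proposition~\ref{prop:first-moment} gives
\[
m_\mu'=\tfrac12m_\mu^2+m_\mu-\tfrac12\sum_k (q_k^\mu)^2\ge\tfrac12 m_\mu(m_\mu+1),
\]
using $\sum_k(q_k^\mu)^2\le m_\mu$. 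Comparing with $y'=y(y+1)/2$, $y(0)=m_\mu(0)$, whose blow-up time is $2\log(1+1/m_\mu(0))$, gives \eqref{eq:restarted-susceptibility-bound}. The last part of the proof of Proposition~\ref{prop:first-moment} gives $m_\mu(t)\uparrow\infty$ as $t\uparrow\tau_\mu$, because $m_n'\le m_n^2$ still holds. In particular $\tau_\mu<\infty$.

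\emph{Compensation and the critical tail.} Redo Proposition~\ref{prop:moment-comparison} with $K,K'$ of law $(q_k^\mu/m_\mu)$, a non-increasing mass function by Lemma~\ref{lem:tail-flow-general-data}. The same algebra gives $A_\mu'\le\frac43(m_\mu+1)A_\mu\le\frac83 A_\mu m_\mu'/m_\mu$, hence
\[
A_\mu(t)\le A_\mu(0)\,m_\mu(0)^{-8/3}\,m_\mu(t)^{8/3}.
\]
The constant is harmless. Paley--Zygmund together with the continuity of $m_\mu$ and $m_\mu\uparrow\infty$ then give $Q_k:=q_k^\mu(\tau_\mu)\ge ck^{-2/3}$ for all large $k$.

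\emph{Explosion.} This step is the one needing care: the frozen comparison requires the graft law $Q$ to be a law on $\N$. There are two cases.

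If $q^\mu_\infty(\tau_\mu)>0$, monotonicity in $t$ gives \eqref{eq:restarted-postcritical-explosion} immediately. This case also makes Proposition~\ref{prop:critical-no-infinity} unnecessary; alternatively that proof can be copied, since it used only \eqref{eq:critical-mean-upper}, which follows from $m_\mu'\ge m_\mu^2/2$.

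Otherwise $Q_k\downarrow0$, so $Q$ is the tail of a variable $Y$ on $\N$. Let $Z$ be the frozen process started from $Y$. The inductive comparison of Proposition~\ref{prop:postcritical-domination} uses only \eqref{eq:general-tail-flow}, $q_u^\mu(\tau_\mu+s)\ge Q_u$, and the nonnegativity of each term. It therefore gives $q^\mu_k(\tau_\mu+s)\ge \mathbb P(Z_s\ge k)$, and hence $q^\mu_\infty(\tau_\mu+s)\ge\mathbb P(\zeta\le s)$. Proposition~\ref{prop:frozen-heavy-tail} with $\alpha=2/3$ shows the right side is positive. Its hypothesis $Q_k\ge ck^{-\alpha}$ for all $k$ follows from the large-$k$ bound, after decreasing $c$, because $Q_k\ge Q_{k'}>0$ for $k\le k'$.

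I expect the only real obstacle to be checking that the differentiation and summation lemmas hold for general finitely supported initial data; the rest is a transcription of the one-sided argument.
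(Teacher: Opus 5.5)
Your proposal is correct and follows the paper's proof essentially step by step: the mean identity and Riccati comparison give the bound on $\tau_\mu$ and $m_\mu\uparrow\infty$; the compensation estimate $A_\mu\le A_\mu(0)\,(m_\mu/m_\mu(0))^{8/3}$ combined with Paley--Zygmund gives the $k^{-2/3}$ lower tail at $\tau_\mu$; and the proof ends with the same two-case split on $q_\infty^\mu(\tau_\mu)$ followed by the frozen comparison. Your extension of the tail bound to small $k$ through monotonicity in $k$ is a slightly cleaner substitute for the paper's appeal to strict positivity from the triangular system, and your aside about copying the no-infinite-avalanche argument is not needed, since the case split already covers it.
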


\begin{proof}
Since $\mu$ has finite support, all its moments are finite. On every
compact subinterval of $[0,\tau_\mu)$, the arguments of
Propositions~\ref{prop:first-moment} and~\ref{prop:polynomial-moments}
apply verbatim to \eqref{eq:general-tail-flow}. In particular, with
$A_\mu(t):=\sum_{k\geq1}kq_k^\mu(t)$,
\[
        m_\mu'
        =
        \frac12m_\mu^2+m_\mu
        -\frac12\sum_{k\geq1}(q_k^\mu)^2
        \geq\frac12m_\mu(m_\mu+1),
        \qquad
        A_\mu<\infty.
\]
Comparison with the solution of $y'=y(y+1)/2$ started from
$y(0)=m_\mu(0)$ proves
\eqref{eq:restarted-susceptibility-bound}. The truncated estimate
$m_{\mu,n}'\leq m_{\mu,n}^2$, with
$m_{\mu,n}:=\sum_{k=1}^nq_k^\mu$, gives as in
Proposition~\ref{prop:first-moment}
\[
        m_\mu(t)\longrightarrow\infty
        \qquad\text{as }t\uparrow\tau_\mu.
\]

Fix $t<\tau_\mu$ and normalize by
$p_k^\mu:=q_k^\mu/m_\mu$. Then $(p_k^\mu)_{k\geq1}$ is a non-increasing
probability mass function. Writing
\[
        L_\mu:=\sum_{1\leq u<k}kq_u^\mu q_k^\mu,
        \qquad
        D_\mu:=\sum_{k\geq1}k(q_k^\mu)^2,
\]
the same summation as in Proposition~\ref{prop:moment-comparison} gives
\[
        A_\mu'=2m_\mu A_\mu-L_\mu+A_\mu-D_\mu.
\]
Lemma~\ref{lem:decreasing-mass} yields
\[
        L_\mu
        \geq
        \frac23m_\mu A_\mu-\frac16m_\mu^2-\frac12D_\mu,
\]
and therefore
\[
        \frac{A_\mu'}{A_\mu}
        \leq\frac43(m_\mu+1)
        \leq\frac83\frac{m_\mu'}{m_\mu}.
\]
Hence
\[
        A_\mu(t)
        \leq
        A_\mu(0)
        \left(\frac{m_\mu(t)}{m_\mu(0)}\right)^{8/3}.
\]

For $t<\tau_\mu$, let $Y_t$ have law $\mu_t$. Then
$\mathbb E[Y_t]=m_\mu(t)$ and
$\mathbb E[Y_t^2]=2A_\mu(t)-m_\mu(t)$. Paley--Zygmund consequently gives
a constant $c_\mu>0$ such that
\[
        q_{\lceil m_\mu(t)/2\rceil}^\mu(t)
        \geq c_\mu m_\mu(t)^{-2/3}.
\]
For all sufficiently large integers $n$, choose
$t_n<\tau_\mu$ with $m_\mu(t_n)=2n$. Monotonicity in time gives
\[
        q_n^\mu(\tau_\mu)
        \geq q_n^\mu(t_n)
        \geq c_\mu' n^{-2/3}.
\]
The triangular system gives $q_k^\mu(t)>0$ for every $k$ and $t>0$;
after decreasing $c_\mu'$ over finitely many indices, the preceding
bound may therefore be taken for all $k\geq1$.

If $q_\infty^\mu(\tau_\mu)>0$, then
\eqref{eq:restarted-postcritical-explosion} follows immediately from
monotonicity in time. Otherwise
$Q_k:=q_k^\mu(\tau_\mu)\downarrow0$ is the tail of an almost surely finite
random variable and satisfies $Q_k\geq c_\mu'k^{-2/3}$.
Proposition~\ref{prop:frozen-heavy-tail} applies to the corresponding
frozen process. The comparison argument of
Proposition~\ref{prop:postcritical-domination}, with
$\widetilde q_k(s):=q_k^\mu(\tau_\mu+s)$, gives
\[
        q_\infty^\mu(\tau_\mu+s)
        \geq\mathbb P(\zeta\leq s)>0,
        \qquad s>0.
\]
\end{proof}

\section{Subcritical exponential moments}
\label{sec:exponential-moments}
Remember $(X_t)_{t\geq 0}$ stands for the shifted range process started from $1$.
\begin{proposition}
\label{prop:exponential-moments}
For every \(t_0<t_*\), there exists \(\lambda_{t_0}>0\) such that
\[
        \sup_{0\leq t\leq t_0}\mathbb{E}[e^{\lambda_{t_0}X_t}]<\infty.
\]
If \(t_0>0\), one may take
\begin{equation}
\label{eq:explicit-lambda}
        \lambda_{t_0}
        =
        \log\left(1+\frac{e^{-2t_0m(t_0)}}{2t_0}\right).
\end{equation}
\end{proposition}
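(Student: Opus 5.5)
We work with the truncated exponential tail sums
\[
        \Phi_n(t):=\sum_{k=1}^n z^k q_k(t),\qquad z=e^{\lambda}>1,
\]
and show that they are bounded uniformly in $n$ and $t\le t_0$. Since $\mathbb E[e^{\lambda X_t}]\le 1+(z-1)\sum_k z^{k-1}q_k(t)$, this gives the proposition. The case $t_0=0$ is trivial because $X_0=1$, so we assume $t_0>0$.

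\emph{Step 1: a linear differential inequality.} Using \eqref{eq:tail-gain-loss} alone, through \eqref{eq:q-prime-gain-bound}, would give $\Phi_n'\le\Phi_n^2$. This blows up at time $1/\Phi_n(0)$ and is useless near $t_*$, so the loss term must be kept. We use the tail system \eqref{eq:tail-system} in the form
\[
        q_k'=\sum_{u=1}^{k-1}q_u(q_{k-u}-q_k)+q_k(1-q_k),
\]
and we keep the gain/loss differences $q_{k-u}-q_k$ rather than bounding them by $q_{k-u}$. Multiplying by $z^k$ and summing over $k\le n$, the right-hand side becomes a convolution of $(q_u)$ with the nonnegative masses $p_j=q_j-q_{j+1}$. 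Indeed, $q_{k-u}-q_k=\sum_{j=k-u}^{k-1}p_j$. Reorganising, the coefficient of $p_j z^{j}$ is $\sum_{u}q_u(z^{u}+\dots)$, which is at most $\sum_{u\ge1}q_u\,u\,z^{u}$ up to a factor of $z$.

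This is the compensator rewriting of Proposition~\ref{prop:tail-system}: from state $x$ the process jumps at rate $x+1$ to at most $x+X'$, since $U_x\le x$. The resulting inequality is linear in the unknown law of $X_t$, with coefficients given by the law of the graft. Concretely, we expect an inequality of the form
\[
        \frac{d}{dt}\mathbb E[z^{X_t}]\le \mathbb E\bigl[(X_t+1)z^{X_t}\bigr]\bigl(\mathbb E[z^{X_t'}]-1\bigr).
\]

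\emph{Step 2: freezing the coefficient and Gronwall.} The graft factor $\mathbb E[z^{X'_t}]-1$ is controlled by monotonicity in the level together with the a priori bounds already available. These are $m(t)\le m(t_0)$ (Proposition~\ref{prop:first-moment}) and $A(t)\le\exp\{2\int_0^t m\}\le e^{2t_0m(t_0)}$ (Proposition~\ref{prop:polynomial-moments}). The aim is the following: for $z-1\le e^{-2t_0m(t_0)}/(2t_0)$, the multiplicative factor satisfies $(z-1)\cdot 2t_0\cdot e^{2t_0 m(t_0)}\le1$. Then the comparison ODE for $\Psi(t):=\sup_n\Phi_n(t)$ has the form $\Psi'\le a\Psi+b\,\Psi^2(z-1)$, with $a,b$ of order $m(t_0)$. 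This ODE stays finite on $[0,t_0]$ because its blow-up time exceeds $t_0$ exactly under the stated choice \eqref{eq:explicit-lambda}. The exponential weight $e^{-2t_0m(t_0)}$ is the integrating factor $\exp\{-2\int_0^{t_0}m\}$ coming from the linear part. The factor $1/(2t_0)$ comes from requiring the remaining Riccati term to survive for time $t_0$.

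\emph{Step 3: uniformity and passage to the limit.} Everything is done for the truncations $\Phi_n$ with $\Phi_n(0)=z$, so all sums are finite and differentiation is justified as in Lemma~\ref{lem:tail-differentiation}. The bound is uniform in $n$ and in $t\in[0,t_0]$, and monotone convergence concludes.

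The main obstacle is Step 1: extracting from the loss term a linear, Gronwall-type control that does not degenerate into $\Phi'\le\Phi^2$. If the direct summation does not close, I would instead run a bootstrap over short level intervals. On each interval I would freeze the graft law at the right endpoint, as in Proposition~\ref{prop:postcritical-domination} but with the inequality reversed, using $q_u(s)\le q_u(t_0)$. This dominates $X$ by the linear process jumping $x\mapsto x+Y'$ at rate $x+1$. Its generating function solves an explicit first-order PDE, which is solvable by characteristics, and the domination propagates exponential moments across the interval.
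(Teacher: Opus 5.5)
There is a genuine gap, and it sits exactly where you place the main obstacle. Your diagnosis that the gain bound \eqref{eq:q-prime-gain-bound} alone is useless is mistaken; the paper's proof uses nothing else. The missing idea is to subtract the constant from the weights. Work with $D_n(t)=\sum_{k\le n}(r^k-1)q_k(t)$ instead of $\sum_{k\le n}r^kq_k(t)$. The gain term is a convolution of the tails with themselves, so the identity $r^{u+v}-1=(r^u-1)+(r^v-1)+(r^u-1)(r^v-1)$ gives $D_n'\le 2mD_n+D_n^2$. Here the linear coefficient $2m(t)$ is the mean, already known to be finite on $[0,t_0]$, and $D_n(0)=r-1=\delta$ is small. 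Dividing by $B=\exp\{2\int_0^t m\}$ gives $E_n'\le BE_n^2$ for $E_n=D_n/B$. Hence $D_n(t)\le \delta B(t)/(1-\delta H(t))$ with $H(t_0)=\int_0^{t_0}B\le t_0e^{2t_0m(t_0)}$, and the choice \eqref{eq:explicit-lambda} makes $\delta H(t_0)\le 1/2$. Your target form $\Psi'\le a\Psi+b(z-1)\Psi^2$ does hold, with $a=2m$ and $b=1$, but for $\Psi=D_n/(z-1)$ (so $\Psi(0)=1$), not for $\sup_n\Phi_n$. Your reading of the two factors in \eqref{eq:explicit-lambda} is right, but you never derive the inequality that produces them.

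What you actually propose in Step 1 does not lead there. The generator bound $\frac{d}{dt}\mathbb E[z^{X_t}]\le\mathbb E[(X_t+1)z^{X_t}]\bigl(\mathbb E[z^{X_t}]-1\bigr)$ is formally valid, but it has two problems.
\begin{itemize}
\item It is coarser than \eqref{eq:q-prime-gain-bound}: at first order in $z-1$ it gives $m'\le m(m+1)$, not $m'\le m^2$.
\item More seriously, $\mathbb E[X_tz^{X_t}]$ is not controlled by any quantity at the same $z$, so the inequality does not close into an ODE in $t$. Your coefficient bound via $\sum_u uq_uz^u$ has the same defect.
\end{itemize}
In Step 2, the graft factor $\mathbb E[z^{X_t'}]-1$ is exactly the exponential moment you are trying to prove. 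The a priori bounds on $m$ and $A$ are polynomial moment bounds and say nothing about it, so that step is circular. The fallback is circular in the same way. With $Y'$ distributed as $X_{t_0}$, the linear process jumping $x\mapsto x+Y'$ at rate $x+1$ has exponential moments at a positive time only if $Y'$ does, since a single jump of size $Y'$ occurs with positive probability. The truncated tail sums avoid this: the quadratic term is bounded by $D_n^2$, the same finite truncated quantity, so no prior exponential information is needed.
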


\begin{proof}
Fix \(r>1\), and set
\[
        D_n(t):=\sum_{k=1}^n(r^k-1)q_k(t).
\]
By \eqref{eq:q-prime-gain-bound},
\[
\begin{aligned}
        D_n'(t)
        &\leq
        \sum_{\substack{u,v\geq1\\u+v\leq n}}
        (r^{u+v}-1)q_u(t)q_v(t)\\
        &\leq
        \sum_{u,v=1}^n(r^{u+v}-1)q_u(t)q_v(t).
\end{aligned}
\]
Since
\(r^{u+v}-1=(r^u-1)+(r^v-1)+(r^u-1)(r^v-1)\),
\begin{equation}
\label{eq:exponential-riccati}
        D_n'\leq2mD_n+D_n^2.
\end{equation}
Set
\[
        B(t):=\exp\left\{2\int_0^tm(s)\,ds\right\},
        \qquad
        H(t):=\int_0^tB(s)\,ds.
\]
Writing \(r=1+\delta\), one has \(D_n(0)=\delta\). For
\(E_n:=D_n/B\), inequality~\eqref{eq:exponential-riccati} gives
\(E_n'\leq BE_n^2\), hence
\[
        \frac1{E_n(t)}\geq\frac1\delta-H(t).
\]
Whenever \(\delta H(t_0)<1\),
\[
        D_n(t)
        \leq
        \frac{\delta B(t)}{1-\delta H(t)},
        \qquad 0\leq t\leq t_0.
\]
Since \(m\) is non-decreasing,
\(B(t)\leq e^{2t_0m(t_0)}\) and
\(H(t_0)\leq t_0e^{2t_0m(t_0)}\). For \(t_0>0\), choose
\(\delta=e^{-2t_0m(t_0)}/(2t_0)\). Then
\(\delta H(t_0)\leq1/2\), and the preceding estimate is uniform in \(n\)
and \(0\leq t\leq t_0\). Letting \(n\to\infty\),
\[
        \sup_{0\leq t\leq t_0}
        \sum_{k\geq1}(r^k-1)q_k(t)<\infty.
\]
For every \(\N\)-valued random variable \(W\), and $r\geq 1$,
\begin{equation}
\label{eq:generic-W}
        \mathbb{E}[r^W]-1
        =
        (r-1)\sum_{k\geq1}r^{k-1}\mathbb{P}(W\geq k)  \leq
        \sum_{k\geq1}(r^k-1) \mathbb{P}(W\geq k).
\end{equation}
Applying this identity to \(X_t\) proves the assertion with
\(\lambda_{t_0}=\log r\). The case \(t_0=0\) is immediate.
\end{proof}

\begin{corollary}
\label{cor:subcritical-tails}
For every \(t<t_*\), there exist \(c_t,C_t>0\) such that
\[
        \mathbb{P}(X_t\geq k)\leq C_te^{-c_tk},
        \qquad k\geq1.
\]
In particular, \(\mathbb{E}[X_t^p]<\infty\) for every \(p>0\).
\end{corollary}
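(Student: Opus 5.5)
The corollary follows from Proposition~\ref{prop:exponential-moments} by a Chernoff bound. Fix \(t<t_*\) and apply the proposition with \(t_0:=t\). This gives \(\lambda_t>0\) with
\[
        M_t:=\mathbb{E}[e^{\lambda_tX_t}]<\infty .
\]
If \(t=0\), then \(X_0=1\) and every claim is trivial, so we may assume \(t>0\); in that case \(\lambda_t\) is even explicit through \eqref{eq:explicit-lambda}. Since \(X_t\) is \(\N\)-valued and \(M_t<\infty\), in particular \(X_t<\infty\) almost surely.

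Markov's inequality applied to \(e^{\lambda_tX_t}\) gives, for every \(k\geq1\),
\[
        \mathbb{P}(X_t\geq k)
        =\mathbb{P}\bigl(e^{\lambda_tX_t}\geq e^{\lambda_tk}\bigr)
        \leq M_te^{-\lambda_tk}.
\]
We may therefore take \(c_t:=\lambda_t\) and \(C_t:=\max\{M_t,1\}\).

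For the moments, fix \(p>0\). There is a constant \(K_p\) with \(x^p\leq K_pe^{\lambda_tx}\) for all \(x\geq0\), since \(x^pe^{-\lambda_tx}\) is bounded on \([0,\infty)\). Hence
\[
        \mathbb{E}[X_t^p]\leq K_pM_t<\infty .
\]
Alternatively, one can sum \(\sum_kk^{p-1}\mathbb{P}(X_t\geq k)\), which is finite by the tail bound just proved.

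None of these steps is difficult; the only substantive input is the exponential moment from Proposition~\ref{prop:exponential-moments}.
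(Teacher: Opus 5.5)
Your proposal is correct and follows the paper's argument: an exponential moment from Proposition~\ref{prop:exponential-moments} combined with Markov's inequality, which also gives all polynomial moments. The paper picks \(t_0\in(t,t_*)\) rather than \(t_0=t\), but this makes no difference, since the supremum in that proposition already includes the endpoint \(t_0\).
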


\begin{proof}
Choose \(t_0\in(t,t_*)\) and apply Markov's inequality with the exponential
moment furnished by Proposition~\ref{prop:exponential-moments}.
\end{proof}

\begin{proof}[Proof of Theorem~\ref{thm:range-criticality}]
The bounds on $t_{\mathrm{sus}}^{\mathrm{os}}$ and the divergence of
$m$ follow from Proposition~\ref{prop:first-moment}. The critical lower tail
is Proposition~\ref{prop:critical-lower-tail}, while
Proposition~\ref{prop:critical-no-infinity} shows that
$X_{t_*}<\infty$ almost surely. Proposition~\ref{prop:equality-critical-times}
identifies $t_{\mathrm{sus}}^{\mathrm{os}}$ with $t_{\infty}^{\mathrm{os}}$
and gives strictly postcritical explosion. The subcritical exponential
moments follow from Proposition~\ref{prop:exponential-moments}.
\end{proof}

\section{Endpoint domination and isotropic sharpness}
\label{sec:isotropic-range}

We now consider the one-dimensional isotropic Bak--Sneppen model, in which
the minimum site and both of its neighbours are refreshed. Starting from a
configuration above level $t$, force one update of $\{-1,0,1\}$ and continue
until all fitnesses again exceed $t$. Its infinite-volume range is an interval
\[
        \mathcal R_t=[-L_t,R_t],
\]
where $L_t,R_t\in\N\cup\{\infty\}$ and $L_0=R_0=1$. Put
\[
        S_t:=L_t+R_t+1,
\]
with the convention that $S_t=\infty$ as soon as one endpoint is infinite,
and define
\[
        t_{\mathrm{sus}}^{\mathrm{iso}}
        :=\sup\{t\geq0:\mathbb{E}[S_t]<\infty\},
        \qquad
        t_{\infty}^{\mathrm{iso}}
        :=\inf\{t\geq0:\mathbb{P}(S_t=\infty)>0\}.
\]

Positive mass at infinity forces an infinite mean, so
\begin{equation}
\label{eq:isotropic-easy-threshold-order}
        t_{\mathrm{sus}}^{\mathrm{iso}}
        \leq t_{\infty}^{\mathrm{iso}}.
\end{equation}
The level-regeneration and graphical arguments of
Section~\ref{sec:avalanche-range} apply without change, except that the
position of the triggering site inside the current interval must be retained, and the random directed graph is defined on $\Z \times [0,\infty)$.

\begin{proposition}[Isotropic range evolution]
\label{prop:isotropic-range-evolution}
At level $t$, conditionally on
$(L_t,R_t)=(\ell,r)\in\N^2$, the proposed graft rate is
$\ell+r+1$. If $U$ is uniform on $\{-\ell,\ldots,r\}$ and
$(L_t',R_t')$ is an independent copy of $(L_t,R_t)$, then the proposed
update is
\begin{equation}
\label{eq:isotropic-range-update}
        (\ell,r)
        \longmapsto
        \bigl(\max\{\ell,L_t'-U\},\max\{r,R_t'+U\}\bigr).
\end{equation}
Null updates are allowed.
\end{proposition}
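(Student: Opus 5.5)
The argument follows the proof of Proposition~\ref{prop:range-evolution}, using the isotropic version of the self-similar graphical representation on $\Z\times[0,\infty)$.

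\textbf{Step 1: rate and location of the next mark.} On $\{(L_t,R_t)=(\ell,r)\}$ the reachable set at level $t$ is the interval $\{-\ell,\ldots,r\}$, by the isotropic analogue of Lemma~\ref{lem:range-interval}. Each of its $\ell+r+1$ sites carries an independent rate-one Poisson process of marks. By Lemma~\ref{lem:level-regeneration}, transferred to the isotropic rule, these are exactly the triggering events once the threshold is raised past $t$. Superposing them, the next mark arrives at rate $\ell+r+1$ and sits at a site $U$ that is uniform on $\{-\ell,\ldots,r\}$ and independent of the past. Regeneration also shows that the mark carries the range set of an avalanche at the current level. This set is rooted at $U$, independent of $\mathcal H_t$ and of $U$, and has the law of $\mathcal R_t$ translated by $U$. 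Write it as $U+[-L_t',R_t']$, where $(L_t',R_t')$ is an independent copy of $(L_t,R_t)$.

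\textbf{Step 2: the update rule.} The new reachable set is the union of $[-\ell,r]$ with $[U-L_t',U+R_t']$. Since $U\in[-\ell,r]$, the two intervals overlap, so the union is again an interval. Its left endpoint is $-\max\{\ell,L_t'-U\}$ and its right endpoint is $\max\{r,R_t'+U\}$, which gives \eqref{eq:isotropic-range-update}. If the graft lies inside the current interval, the update is null; this is why null updates are allowed.

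\textbf{Step 3: justifying the iteration (main obstacle).} Regeneration must be applied repeatedly without marks accumulating. As in Section~\ref{sec:avalanche-range}, stop at $\sigma_n=\inf\{t:S_t\geq n\}$. Before $\sigma_n$ fewer than $n$ vertical lines are active, so only finitely many marks fall in any bounded level interval. The strong Markov form of regeneration then holds at each successive mark. After $\sigma_n$, monotone limits identify the graphical process with $(L_t,R_t)$.

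The one genuinely new point is the joint law of the graft. The left and right extents $L_t'$ and $R_t'$ of a single subavalanche are dependent, so they must be kept as a pair; they cannot be replaced by independent marginals. Keeping the pair is harmless because the update rule above already uses the pair directly.
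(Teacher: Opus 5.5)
Your proof is correct and is essentially the paper's argument. You superpose the $\ell+r+1$ rate-one mark processes on the current interval to get a uniform location $U$, attach an independent current-level avalanche with range $[U-L_t',U+R_t']$, and take the union with $[-\ell,r]$; your remark that $U$ lies in both intervals, so the union is an interval, is implicit in the paper. Your Step~3, which localises at $\sigma_n$ and then lets $n\to\infty$ (what you call ``after $\sigma_n$''), is exactly what the paper covers by saying the graphical construction of Section~\ref{sec:avalanche-range} carries over unchanged to $\Z\times[0,\infty)$.
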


\begin{proof}
The current range contains $\ell+r+1$ sites, each carrying a rate-one
Poisson process on the level axis. Their superposition has rate
$\ell+r+1$, and the location $U$ of the next mark is uniform on the current
interval. The mark carries an independent isotropic avalanche at the current
level. After translation by $U$, its range is
$[U-L_t',U+R_t']$; taking the union with $[-\ell,r]$ gives
\eqref{eq:isotropic-range-update}.
\end{proof}

For $k\geq1$, set
\[
h_k(t):=\mathbb{P}(L_t\geq k),
\qquad
\ell(t):=\mathbb{E}[L_t]=\sum_{k\geq1}h_k(t),
\]
where the equality is understood in $[0,\infty]$. Reflection symmetry gives
$L_t\stackrel{\mathrm d}=R_t$ and
$\mathbb{E}[S_t]=2\ell(t)+1$. For a tail sequence $a=(a_k)_{k\geq1}$ with
$a_1=1$, define
\[
\Phi_k(a)
:=
\sum_{u=1}^{k-1}a_u(a_{k-u}-a_k)+a_k(1-a_k),
\qquad k\geq2.
\]
\begin{proposition}[Endpoint tail inequalities]
\label{prop:isotropic-endpoint-tail}
For every $0\leq s\leq t$ and $k\geq2$,
\begin{equation}
\label{eq:isotropic-endpoint-supersolution}
h_k(t)-h_k(s)
\geq
\int_s^t\Phi_k(h(u))\,du.
\end{equation}
Moreover, $h_k$ is continuously differentiable on
$[0,t_{\mathrm{sus}}^{\mathrm{iso}})$ and, for every
$t<t_{\mathrm{sus}}^{\mathrm{iso}}$,
\begin{equation}
\label{eq:isotropic-endpoint-upper}
h_k'(t)
\leq
\sum_{u=1}^{k-1}h_u(t)h_{k-u}(t)
+\ell(t)h_k(t).
\end{equation}
\end{proposition}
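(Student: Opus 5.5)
The plan is to run the compensator calculation of Proposition~\ref{prop:tail-system} for the pair $(L_t,R_t)$, using the isotropic range evolution of Proposition~\ref{prop:isotropic-range-evolution}, but only for the left endpoint. We then discard or bound the terms that couple $L$ and $R$. Fix $k\geq2$. Before $L$ reaches $k$, the process can still have large $R$, so the total rate $\ell+r+1$ is not bounded. We therefore localize with $\sigma_n:=\inf\{t:S_t\geq n\}$ (equivalently, use the truncated graphical construction up to $\sigma_n^{\mathrm{gr}}$), write the compensator identity for $\mathbf 1\{L_{t\wedge\sigma_n}\geq k\}$, and let $n\to\infty$ by monotone convergence. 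This is legitimate because all rates before $\sigma_n$ are bounded by $n$.

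On $\{L_u=\ell<k,\,R_u=r\}$, a graft at $U=j\in\{-\ell,\ldots,r\}$ pushes $L$ to at least $k$ iff $L'_u\geq k+j$. The crossing rate is therefore
\[
        \sum_{j=-\ell}^{r}h_{k+j}(u),
\]
with $h_{m}:=1$ for $m\leq 1$; note $k+j\geq k-\ell\geq1$.

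For the lower bound \eqref{eq:isotropic-endpoint-supersolution}, drop all terms with $j\geq1$. What remains is $\sum_{j=0}^{\ell}h_{k-j}(u)$, exactly the one-sided crossing rate with $q$ replaced by $h$. Taking expectations over the marginal of $L_u$ (now $R$ has disappeared) and repeating the summation in the proof of Proposition~\ref{prop:tail-system} with $p_x=h_x-h_{x+1}$ yields
\[
        \frac{d}{du}\,\mathbb E\bigl[\mathbf 1\{L_u\geq k\}\bigr]\geq \Phi_k(h(u))
\]
in integrated form. The localization passes to the limit since the integrand is nonnegative and bounded. A minor point is that $\mathbb P(L_u=\infty)$ may be positive. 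However, $\{L_u\geq k\}$ already contains it, and the compensator only involves the states with $L_u<k$, so no issue arises.

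For the upper bound \eqref{eq:isotropic-endpoint-upper}, take $t<t_{\mathrm{sus}}^{\mathrm{iso}}$, so $\mathbb E[S_u]<\infty$ on $[0,t]$ and, in particular, $R_u<\infty$ almost surely. Split the crossing rate into two parts. The first is $\sum_{j=-\ell}^{0}h_{k+j}\leq\sum_{j=0}^{\ell}h_{k-j}$, which after averaging over $L_u$ gives at most $\sum_{u=1}^{k-1}h_uh_{k-u}$ plus the $h_k$ term $\mathbb P(L_u<k)h_k$; we bound this as in \eqref{eq:q-prime-gain-bound}, using the gain-loss rewriting \eqref{eq:tail-gain-loss}. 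The second part is $\sum_{j=1}^{r}h_{k+j}\leq r\,h_k$. Averaging over $R_u$ contributes at most $\mathbb E[R_u]h_k=\ell(u)h_k$.

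The main obstacle is getting the constants exactly to match the claimed form. For the first part, I would check carefully that
\[
        \sum_{x<k}\mathbb P(L=x)\sum_{j=0}^{x}h_{k-j}\leq\sum_{u=1}^{k-1}h_uh_{k-u},
\]
which follows from the identity in Proposition~\ref{prop:tail-system} minus the nonnegative loss terms. For the second part, I would bound $\mathbb E[R_u\mathbf 1\{L_u<k\}]\leq\mathbb E[R_u]$. This gives an integrated upper inequality with continuous integrand on $[0,t_{\mathrm{sus}}^{\mathrm{iso}})$. Combined with the lower bound, it shows $h_k$ is Lipschitz and hence continuous. The exact compensator integrand is then continuous in $u$: this uses continuity of the joint law's relevant functionals, obtained from the $L^1$ domination by $S_u$ and dominated convergence. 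Hence $h_k\in C^1$, and the pointwise inequality \eqref{eq:isotropic-endpoint-upper} follows.
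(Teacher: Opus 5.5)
Your treatment of the upper bound \eqref{eq:isotropic-endpoint-upper} and of the $C^1$ property is essentially the paper's. It uses an exact compensator identity below $t_{\mathrm{sus}}^{\mathrm{iso}}$, split into grafts rooted in $[-L_u,0]$, contributing $\Phi_k(h)\le\sum_{u=1}^{k-1}h_uh_{k-u}$, and grafts rooted in $[1,R_u]$, contributing at most $\ell h_k$.

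\textbf{The gap is in the lower bound} \eqref{eq:isotropic-endpoint-supersolution}. This bound must hold for all $0\le s\le t$, including levels above $t_{\mathrm{sus}}^{\mathrm{iso}}$: the comparison in Proposition~\ref{prop:isotropic-endpoint-domination} integrates it across the critical level, which is where isotropic sharpness is obtained. Your localization at $\sigma_n=\inf\{t:S_t\ge n\}$, followed by monotone convergence, only yields
\[
h_k(t)-h_k(s)\geq\mathbb E\int_s^t\mathbf 1_{\{u<\sigma_\infty\}}\mathbf 1_{\{L_u<k\}}\sum_{j=0}^{L_u}h_{k-j}(u)\,du,
\qquad\sigma_\infty:=\lim_{n\to\infty}\sigma_n .
\]
The reason is that the integration domain $[s\wedge\sigma_n,t\wedge\sigma_n]$ increases to $[s\wedge\sigma_\infty,t\wedge\sigma_\infty]$, not to $[s,t]$. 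A priori you cannot exclude $\sigma_\infty<t$ (indeed $\mathbb P(S_t=\infty)>0$ above criticality). On $\{u\ge\sigma_\infty\}$ one may then have $R_u=\infty$ while $L_u<k$. Your remark about $\mathbb P(L_u=\infty)>0$ does not cover this case, and boundedness of the integrand does not recover $\int_s^t\Phi_k(h(u))\,du$.

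\textbf{The missing observation} is that, once the grafts rooted at $j\ge1$ are discarded, no localization on $S$ is needed at all. Every retained crossing mark sits at a site $-v$ with $0\le v\le L_{u-}<k$, hence on one of the $k$ fixed vertical lines $\{-(k-1),\ldots,0\}$. The counting process of such marks has the following properties:
\begin{itemize}
\item its intensity is at most $k$ and it jumps at most once;
\item it is bounded by $\mathbf 1_{\{L_t\ge k\}}-\mathbf 1_{\{L_s\ge k\}}$;
\item its compensator is exactly $\int_s^t\mathbf 1_{\{L_u<k\}}\sum_{v=0}^{L_u}h_{k-v}(u)\,du$ at every level, whatever happens to $R$.
\end{itemize}
This is the paper's argument. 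Alternatively, you could keep your localization and prove that the lost term vanishes. For $u>\sigma_\infty\ge t_{\mathrm{sus}}^{\mathrm{iso}}$ with $R_{\sigma_\infty}=\infty$, there are infinitely many active right lines and $\sum_{j\ge1}h_{k+j}=\infty$ at such levels, which via Borel--Cantelli forces $L_u=\infty$ almost surely. That route needs its own argument.

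A smaller point concerns the $C^1$ step. The exact integrand involves the joint law of $(L_u,R_u)$, so its continuity needs the almost sure convergence $(L_v,R_v)\to(L_u,R_u)$ as $v\to u$. This holds because there are finitely many reachable marks on $[0,t_0]$ and none at a fixed level. Dominated convergence by $R_{t_0}$ is then applied on top of this; continuity of the marginal tails $h_j$ alone does not suffice.
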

\begin{proof}
Fix $k\geq2$. For $0\leq s\leq t$, let $N_k(s,t]$ be the number of
marks $(-v,u)$ with $s<u\leq t$ such that $L_{u-}<k$,
$0\leq v\leq L_{u-}$, and the attached avalanche has left endpoint at
least $k-v$. Any such mark makes the left endpoint reach $k$, and no
further such mark is counted thereafter. Hence
\[
N_k(s,t]
\leq
\mathbf{1}_{\{L_t\geq k\}}
-\mathbf{1}_{\{L_s\geq k\}}.
\]
Only the $k$ fixed vertical lines $\{-(k-1),\ldots,0\}$ are involved, so
compensation gives
\[
\begin{aligned}
h_k(t)-h_k(s)
&\geq
\int_s^t
\mathbb{E}\left[
\mathbf{1}_{\{L_u<k\}}
\sum_{v=0}^{L_u}h_{k-v}(u)
\right]du  \\
&=
\int_s^t\sum_{\ell=1}^{k-1}
(h_\ell(u)-h_{\ell+1}(u))
\sum_{v=0}^{\ell}h_{k-v}(u)\,du
=
\int_s^t\Phi_k(h(u))\,du,
\end{aligned}
\]
where the last equality follows by the same summation by parts as in
Proposition~\ref{prop:tail-system}. This proves
\eqref{eq:isotropic-endpoint-supersolution}.
Fix now $t_0<t_{\mathrm{sus}}^{\mathrm{iso}}$. Since the range is
increasing,
\[
\int_0^{t_0}\mathbb{E}[S_u]\,du
\leq t_0\mathbb{E}[S_{t_0}]<\infty,
\]
so the full crossing process has an integrable compensator on
$[0,t_0]$. Conditionally on $(L_u,R_u)=(\ell,r)$ with $\ell<k$, its
rate of reaching level $k$ is
$\sum_{v=0}^{\ell}h_{k-v}(u)+\sum_{j=1}^{r}h_{k+j}(u)$. Therefore, for
$0\leq s\leq t\leq t_0$,
\[
h_k(t)-h_k(s)
=
\int_s^t\left\{
\Phi_k(h(u))
+
\mathbb{E}\left[
\mathbf{1}_{\{L_u<k\}}
\sum_{j=1}^{R_u}h_{k+j}(u)
\right]\right\}du.
\]
This shows that $h_k$ is absolutely
continuous on $[0,t_0]$,  
and the same is true of $h_1$ since $h_1\equiv1$.
We claim the integrand in the preceding identity is continuous. Indeed, the
number of reachable marks on $[0,t_0]$ has finite expectation and is
therefore almost surely finite, while a fixed deterministic level is
almost surely not a mark. Thus $(L_v,R_v)\to(L_u,R_u)$ almost surely as
$v\to u$. Since every $h_j$ is continuous and
\[
\mathbf{1}_{\{L_v<k\}}
\sum_{j=1}^{R_v}h_{k+j}(v)
\leq R_{t_0},
\qquad \mathbb{E}[R_{t_0}]<\infty,
\]
dominated convergence proves the claim. Hence $h_k$ is continuously
differentiable and its derivative equals the displayed integrand.
Finally, using $h_1=1$, monotonicity of the tails, and reflection
symmetry,
\[
\Phi_k(h(t))
\leq
\sum_{u=1}^{k-1}h_u(t)h_{k-u}(t),
\]
while
\[
\mathbb{E}\left[
\mathbf{1}_{\{L_t<k\}}
\sum_{j=1}^{R_t}h_{k+j}(t)
\right]
\leq
\mathbb{E}[R_t]h_k(t)
=
\ell(t)h_k(t).
\]
This proves \eqref{eq:isotropic-endpoint-upper}. Since $t_0$ was
arbitrary, the result holds throughout
$[0,t_{\mathrm{sus}}^{\mathrm{iso}})$.
\end{proof}
\begin{remark}
A direct repetition of the Paley--Zygmund argument for the isotropic
endpoint loses too much. Indeed, set
$A_L(t):=\sum_{k\geq1}k h_k(t)$. In the exact decomposition of the proof
above, the $\Phi_k$ term is controlled by the same compensation argument
as Proposition~\ref{prop:moment-comparison}, contributing at most
$\frac43(\ell+1)A_L$, whereas the additional contribution from grafts
rooted to the right is at most $\ell A_L$. Hence
\[
A_L'\leq\left(\frac73\ell+\frac43\right)A_L,
\qquad
\frac{\ell'}{\ell}\geq\frac{\ell+1}{2},
\]
and therefore $A_L\lesssim\ell^{14/3}$. Paley--Zygmund then yields only
$\mathbb P(L\geq k)\gtrsim k^{-8/3}$ at criticality, too weak for
Proposition~\ref{prop:frozen-heavy-tail}. This motivates the endpoint
comparison below.
\end{remark}
We write $\preceq$ for the usual stochastic order.
\begin{proposition}[One-sided endpoint domination]
\label{prop:isotropic-endpoint-domination}
Fix $s\geq0$ with $\mathbb E[L_s]<\infty$ and $K\geq1$. Let
$q^{s,K}$ be the tail flow of Lemma~\ref{lem:tail-flow-general-data}
with initial data
\[
q_k^{s,K}(0)
:=
\mathbb P(L_s\wedge K\geq k).
\]
Then, for every $k\geq1$ and $t\geq0$,
\begin{equation}
\label{eq:isotropic-endpoint-domination}
q_k^{s,K}(t)
\leq
\mathbb P(L_{s+t}\geq k).
\end{equation}
Consequently, with $m_{s,K}:=\mathbb E[L_s\wedge K]$,
\begin{equation}
\label{eq:isotropic-explosion-upper}
t_{\infty}^{\mathrm{iso}}
\leq
s+2\log\left(1+\frac1{m_{s,K}}\right).
\end{equation}
In particular,
\begin{equation}
\label{eq:isotropic-one-sided-threshold-comparison}
X_t\preceq L_t,
\qquad
t_{\infty}^{\mathrm{iso}}
\leq t_{\infty}^{\mathrm{os}}.
\end{equation}
\end{proposition}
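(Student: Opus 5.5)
We prove \eqref{eq:isotropic-endpoint-domination} by comparing the two triangular systems coordinate by coordinate, in the spirit of Proposition~\ref{prop:postcritical-domination}, but now using only the integral supersolution property \eqref{eq:isotropic-endpoint-supersolution}. Set $\widetilde h_k(t):=h_k(s+t)$ and $q_k:=q_k^{s,K}$. At $t=0$ we have $q_k(0)=\mathbb P(L_s\wedge K\geq k)\leq h_k(s)=\widetilde h_k(0)$ for every $k$. The case $k=1$ is trivial, since both sides equal $1$.

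We argue by induction on $k$, assuming $q_j\leq\widetilde h_j$ on $[0,\infty)$ for $j<k$. Rewrite $\Phi_k(a)=G_k(a_1,\dots,a_{k-1})-a_k\bigl(\sum_{u=1}^{k-1}a_u+a_k-1\bigr)$, where $G_k=\sum_{u<k}a_ua_{k-u}$ is non-decreasing in the lower coordinates. By \eqref{eq:isotropic-endpoint-supersolution}, the function $\widetilde h_k$ satisfies
\[
\widetilde h_k(t)-\widetilde h_k(t')\geq\int_{t'}^{t}\Phi_k(\widetilde h(u))\,du,
\]
while $q_k$ satisfies the same relation with equality. Set $d_k:=\widetilde h_k-q_k$. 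Write $c_u:=\sum_{u'<k}q_{u'}(u)+\widetilde h_k(u)+q_k(u)-1$, which is bounded. Using the induction hypothesis and $\widetilde h_{k-u}\geq\widetilde h_k$, one gets, for $t'\leq t$,
\[
d_k(t)-d_k(t')\geq-\int_{t'}^{t}c_u\,d_k(u)\,du.
\]
To obtain this, we split $\Phi_k(\widetilde h)-\Phi_k(q)$ as follows. The increase in the lower coordinates contributes $\sum_{u<k}(\widetilde h_u-q_u)(\widetilde h_{k-u}-\widetilde h_k)$ plus the analogous cross terms, which are non-negative because tails are non-increasing in $k$. The remaining part is linear in $d_k$ with a bounded coefficient. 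Since $\widetilde h_k$ is monotone, $d_k$ is of bounded variation with $d_k(0)\geq0$. A Gronwall argument in integral form then yields $d_k\geq0$: multiply by $e^{\int c}$, or argue at the first time $d_k$ hits $-\varepsilon$.

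\textbf{The main obstacle} is the sign bookkeeping in this difference. The term $-a_k\sum_{u<k}a_u$ is decreasing in the lower coordinates, so monotonicity of $\Phi_k$ in $a_1,\dots,a_{k-1}$ is not automatic. We must check that the positive contribution $\sum_u(\widetilde h_u-q_u)\widetilde h_{k-u}$ dominates $\widetilde h_k\sum_u(\widetilde h_u-q_u)$. This holds because $\widetilde h_{k-u}\geq\widetilde h_k$. We must also avoid needing differentiability of $h_k$ beyond $t^{\mathrm{iso}}_{\mathrm{sus}}$, which is why only the integral form is used.

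With \eqref{eq:isotropic-endpoint-domination} in hand, we apply Proposition~\ref{prop:restarted-one-sided} to $\mu=\mathcal L(L_s\wedge K)$, which has finite support and mean $m_{s,K}$. It gives $\tau_\mu\leq2\log(1+1/m_{s,K})$ and $q_\infty^\mu(\tau_\mu+\varepsilon)>0$ for every $\varepsilon>0$. Letting $k\to\infty$ in \eqref{eq:isotropic-endpoint-domination} gives $\mathbb P(L_{s+\tau_\mu+\varepsilon}=\infty)>0$, hence $\mathbb P(S_{s+\tau_\mu+\varepsilon}=\infty)>0$. Since $\varepsilon$ is arbitrary, this yields \eqref{eq:isotropic-explosion-upper}.

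For \eqref{eq:isotropic-one-sided-threshold-comparison}, take $s=0$ and $K=1$. Then $L_0=1$, so the initial data coincide with $q_k(0)$ of Proposition~\ref{prop:tail-system}. By uniqueness in Lemma~\ref{lem:tail-flow-general-data}, $q^{0,1}=q$, and \eqref{eq:isotropic-endpoint-domination} reads $\mathbb P(X_t\geq k)\leq\mathbb P(L_t\geq k)$, that is, $X_t\preceq L_t$. Consequently $q_\infty(t)>0$ forces $\mathbb P(L_t=\infty)>0$, which gives $t_\infty^{\mathrm{iso}}\leq t_\infty^{\mathrm{os}}$.
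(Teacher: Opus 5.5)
Your proof is correct and follows the paper's argument. Both go by induction on $k$ from the integral supersolution inequality, linearize in $d_k$ with a bounded coefficient, and close with an integral Gronwall/last-zero step that needs no continuity of $\widetilde h_k$. Both then apply Proposition~\ref{prop:restarted-one-sided}, and specialize to $s=0$, $K=1$ using uniqueness of the tail flow.

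The one difference is the intermediate point in the linearization, and it is harmless. You freeze the $k$-th coordinate at $\widetilde h_k$ and use the exact bilinear expansion together with $\widetilde h_{k-u}\geq\widetilde h_k$, which holds automatically for the genuine tail $\widetilde h$. The paper instead freezes it at $q_k^{s,K}$ and uses coordinatewise monotonicity of $\Psi_x$ on $\{a_i\geq x\}$; this relies on $q_j^{s,K}\geq q_k^{s,K}$ from Lemma~\ref{lem:tail-flow-general-data}.
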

\begin{proof}
Set $\widetilde h_k(t):=h_k(s+t)$. Then
$(q_k^{s,K})'=\Phi_k(q^{s,K})$,
$q_k^{s,K}(0)\leq\widetilde h_k(0)$, and
\eqref{eq:isotropic-endpoint-supersolution} shows that
$\widetilde h_k$ is an integral supersolution of the same equation.
We prove $q_k^{s,K}\leq\widetilde h_k$ by induction on $k$. The assertion
is immediate for $k=1$.
Suppose it holds below level $k$ and set
$d_k:=q_k^{s,K}-\widetilde h_k$. For $0\leq r\leq t$,
\[
d_k(t)-d_k(r)
\leq
\int_r^t
\bigl(\Phi_k(q^{s,K}(u))-\Phi_k(\widetilde h(u))\bigr)\,du.
\]

For fixed \(x\in[0,1]\), consider
\[
        \Psi_x(a_1,\ldots,a_{k-1})
        :=
        \sum_{u=1}^{k-1}a_u(a_{k-u}-x)+x(1-x).
\]
For \(j<k\),
\[
        \partial_{a_j}\Psi_x
        =2a_{k-j}-x.
\]
Hence \(\Psi_x\) is non-decreasing in each coordinate on the region
where \(a_i\geq x\) for every \(i<k\).

By the induction hypothesis,
\(q^{s,K}_j(u)\leq\widetilde h_j(u)\) for \(j<k\), while
Lemma~\ref{lem:tail-flow-general-data} gives
\(q^{s,K}_j(u)\geq q^{s,K}_k(u)\). Therefore, with the \(k\)-th
coordinate fixed at \(q^{s,K}_k(u)\), increasing the lower coordinates
from \(q^{s,K}(u)\) to \(\widetilde h(u)\) can only increase
\(\Phi_k\). Consequently,
\[
\begin{aligned}
        \Phi_k(q^{s,K}(u))-\Phi_k(\widetilde h(u))
        &\leq
        \Phi_k\bigl(
        \widetilde h_1(u),\ldots,\widetilde h_{k-1}(u),
        q^{s,K}_k(u)
        \bigr)
        -\Phi_k(\widetilde h(u))  \\
        &=
        a_k(u)d_k(u),
\end{aligned}
\]
where
\[
        a_k(u)
        :=
        1-\sum_{j=1}^{k-1}\widetilde h_j(u)
        -q^{s,K}_k(u)-\widetilde h_k(u)
        \leq0,
\]
using $\tilde h_1 \equiv 1$. 
Thus
\begin{equation}
\label{eq:isotropic-endpoint-comparison-difference}
d_k(t)-d_k(r)
\leq
\int_r^t a_k(u)d_k(u)\,du.
\end{equation}
No continuity of $\widetilde h_k$ is needed here. Since
$\widetilde h_k$ is non-decreasing, every discontinuity of
$d_k=q_k^{s,K}-\widetilde h_k$ is downward; the integral inequality
\eqref{eq:isotropic-endpoint-comparison-difference} controls any possible
increase.
We claim that $d_k\leq0$. Otherwise, fix $t>0$ such that $d_k(t)>0$ and
let
\[
\tau:=\sup\{r\in[0,t]:d_k(r)\leq0\}.
\]
The set is nonempty since $d_k(0)\leq0$. On $(\tau,t]$ one has
$d_k>0$, and hence $a_kd_k\leq0$. Choose $r_n\uparrow\tau$ with
$d_k(r_n)\leq0$; if $\tau=0$, take $r_n=0$. Since $a_k$ and $d_k$ are
bounded on compact intervals, \eqref{eq:isotropic-endpoint-comparison-difference}
gives
\[
\begin{aligned}
d_k(t)
&\leq
d_k(r_n)
+\int_{r_n}^{\tau}|a_k(u)d_k(u)|\,du
+\int_{\tau}^{t}a_k(u)d_k(u)\,du \\
&\leq
\int_{r_n}^{\tau}|a_k(u)d_k(u)|\,du
\longrightarrow0.
\end{aligned}
\]
If $\tau=t$, the same estimate applies with the last integral absent.
In either case this contradicts $d_k(t)>0$. Thus $d_k\leq0$, completing
the induction and proving \eqref{eq:isotropic-endpoint-domination}.
Set
\[
\tau_{s,K}
:=
\sup\left\{t\geq0:
\sum_{k\geq1}q_k^{s,K}(t)<\infty\right\}.
\]
Proposition~\ref{prop:restarted-one-sided} gives
\[
\tau_{s,K}
\leq
2\log\left(1+\frac1{m_{s,K}}\right),
\qquad
q_\infty^{s,K}(\tau_{s,K}+\varepsilon)>0
\]
for every $\varepsilon>0$. Letting $k\to\infty$ in
\eqref{eq:isotropic-endpoint-domination} at time
$\tau_{s,K}+\varepsilon$ yields
\[
\mathbb P
(L_{s+\tau_{s,K}+\varepsilon}=\infty)
\geq
q_\infty^{s,K}(\tau_{s,K}+\varepsilon)>0.
\]
Hence
\[
t_{\infty}^{\mathrm{iso}}
\leq
s+2\log\left(1+\frac1{m_{s,K}}\right)+\varepsilon,
\]
and \eqref{eq:isotropic-explosion-upper} follows by letting
$\varepsilon\downarrow0$.
For $s=0$ and $K=1$, the initial data are
$q_1^{0,1}(0)=1$ and $q_k^{0,1}(0)=0$ for $k\geq2$. Uniqueness of the solution to the
triangular system therefore gives
$q_k^{0,1}(t)=\mathbb P(X_t\geq k)$. Thus
\eqref{eq:isotropic-endpoint-domination} gives $X_t\preceq L_t$, and
letting $k\to\infty$ gives
$t_{\infty}^{\mathrm{iso}}\leq t_{\infty}^{\mathrm{os}}$.
\end{proof}

\begin{lemma}[Finite-mean continuation]
\label{lem:isotropic-finite-mean-continuation}
Suppose that $M(s):=\mathbb{E}[S_s]<\infty$. Then
\begin{equation}
\label{eq:isotropic-mean-continuation}
        \mathbb{E}[S_{s+t}]
        \leq
        \frac{M(s)}{1-tM(s)},
        \qquad 0\leq t<M(s)^{-1}.
\end{equation}
\end{lemma}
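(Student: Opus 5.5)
The plan is to imitate the truncated comparison in the proof of Proposition~\ref{prop:first-moment}, which for the one-sided model gave \(m_n'\leq m_n^2\) and hence \(m_n(t_*+s)\leq M/(1-sM)\). Here we work directly with the total range \(S\) and its graphical representation, restarted at level \(s\).

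Fix \(n\) and consider the truncated range \(S_u\wedge n\), or equivalently the range process stopped at \(\sigma_n:=\inf\{u\geq s:S_u\geq n\}\). Before \(\sigma_n\), at most \(n\) vertical lines are active, so the compensator calculation of Proposition~\ref{prop:isotropic-endpoint-tail} is legitimate without any integrability assumption.

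Proposition~\ref{prop:isotropic-range-evolution} governs each graft. Conditionally on the current interval \([-\ell,r]\), the proposed grafts arrive at rate \(\ell+r+1=S_u\). A graft attaches an avalanche whose range has size \(S'_u\), an independent copy of \(S_u\). The new range is contained in the union of the old interval and the translated graft interval, so the size increases by at most \(S'_u\). Hence the drift of \(S_u\) is at most \(S_u\,\mathbb E[S_u]\).

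To keep the bound closed after truncation, I will use the function \(f_n(u):=\mathbb E[S_u\wedge n]\). The increment of \(S\wedge n\) at a graft is at most \(S'_u\wedge n\), and the graft rate is \(S_u\), which equals \(S_u\wedge n\) before truncation matters. If the truncation is already active, the increment of \(S\wedge n\) is zero. The compensator identity therefore gives
\[
f_n(u_2)-f_n(u_1)\leq\int_{u_1}^{u_2}\mathbb E[S_v\wedge n]\,\mathbb E[S_v\wedge n]\,dv=\int_{u_1}^{u_2}f_n(v)^2\,dv .
\]
Here I use independence of the graft from the current range (Lemma~\ref{lem:level-regeneration}). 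I also use that the factor \(S_v\mathbf 1_{\{S_v<n\}}\) is at most \(S_v\wedge n\).

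Comparison with \(y'=y^2\) started from \(f_n(s)\leq M(s)\) gives
\[
f_n(s+t)\leq\frac{M(s)}{1-tM(s)},\qquad 0\leq t<M(s)^{-1}.
\]
Letting \(n\to\infty\) by monotone convergence then yields \eqref{eq:isotropic-mean-continuation}.

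The main obstacle is the increment bound when the graft spills over both ends. A graft rooted at \(U\) adds \((L'-U-\ell)^+ +(R'+U-r)^+\) sites, and this must be at most \(L'+R'+1-1\leq S'\). That inequality holds since \(-\ell\leq U\leq r\). The remaining care point is verifying that truncation at \(n\) keeps the bound multiplicative. For this I will use \(\min(a+b,n)-\min(a,n)\leq\min(b,n)\), which holds because \(\min(\cdot,n)\) is subadditive.
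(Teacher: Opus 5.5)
Your proposal is correct and follows essentially the same route as the paper. Both arguments truncate to $\mathbb E[S_u\wedge n]$, bound the graft rate on $\{S_u<n\}$ by $S_u\wedge n$ and the truncated increment by an independent $S_u'\wedge n$ to get $f_n'\leq f_n^2$, compare with $y'=y^2$ starting from $f_n(s)\leq M(s)$, and let $n\to\infty$. Your explicit checks that a two-sided graft adds at most $L'+R'\leq S'$ sites, and that $\min(\cdot,n)$ is subadditive, simply spell out details the paper leaves implicit.
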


\begin{proof}
For $K\geq1$, set $M_K(t):=\mathbb{E}[S_t\wedge K]$. The
truncated observable can change only while $S_t<K$; there its jump rate
and jump size are bounded by $K$. Thus $M_K$ is locally absolutely
continuous. Conditionally on $S_t=x<K$, marks occur at rate $x$, and
adjoining an independent graft of size $S_t'$ increases the truncated
size by at most $S_t'\wedge K$. The compensator identity therefore
gives, for almost every $t$,
\[
        M_K'(t)
        \leq
        \mathbb{E}[S_t\mathbf 1_{\{S_t<K\}}]M_K(t)
        \leq M_K(t)^2.
\]
Integrating from $s$ to $s+t$ and using then
$M_K(s)\leq M(s)$ gives
\[
        M_K(s+t)
        \leq
        \frac{M(s)}{1-tM(s)},
        \qquad 0\leq t<M(s)^{-1}.
\]
Letting $K\to\infty$ and using monotone convergence proves
\eqref{eq:isotropic-mean-continuation}.
\end{proof}

\begin{theorem}[Isotropic sharpness]
\label{thm:isotropic-criticality}
In the isotropic model,
\begin{equation}
\label{eq:isotropic-criticality}
        t_{\mathrm{sus}}^{\mathrm{iso}}
        =t_{\infty}^{\mathrm{iso}}
        =:t_c^{\mathrm{iso}}.
\end{equation}
Moreover,
\begin{equation}
\label{eq:isotropic-critical-bounds}
        \frac13
        \leq t_c^{\mathrm{iso}}
        \leq t_c^{\mathrm{os}}
        \leq2\log2,
\end{equation}
and
\[
        \mathbb{P}(S_{t_c^{\mathrm{iso}}+s}=\infty)>0,
        \qquad s>0.
\]
\end{theorem}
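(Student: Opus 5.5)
The argument splits into the equality of thresholds, the explicit bounds, and the postcritical explosion.

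\emph{Equality of thresholds.} By \eqref{eq:isotropic-easy-threshold-order} it suffices to prove $t_{\infty}^{\mathrm{iso}}\leq t_{\mathrm{sus}}^{\mathrm{iso}}=:t_s$. Assume first $t_s<\infty$. For $s<t_s$ we have $\mathbb E[S_s]<\infty$, hence $\ell(s)=\mathbb E[L_s]<\infty$. By monotone convergence, $m_{s,K}=\mathbb E[L_s\wedge K]\uparrow\ell(s)$ as $K\to\infty$. Inequality \eqref{eq:isotropic-explosion-upper} then gives
\[
t_\infty^{\mathrm{iso}}\leq s+2\log\bigl(1+1/\ell(s)\bigr).
\]
The point is that $\ell(s)\to\infty$ as $s\uparrow t_s$. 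Then letting $s\uparrow t_s$ makes the logarithmic term vanish, and $t_\infty^{\mathrm{iso}}\leq t_s$ follows.

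To get $\ell(s)\to\infty$, I use Lemma~\ref{lem:isotropic-finite-mean-continuation}. Suppose instead that $\mathbb E[S_s]=2\ell(s)+1$ stays bounded by some $M$ along $s\uparrow t_s$. Choose $s<t_s$ with $t_s-s<1/(2M)$. The lemma then gives $\mathbb E[S_{s+t}]<\infty$ for all $t<1/M$, and in particular for some level strictly beyond $t_s$. This contradicts the definition of $t_s$. Since $\ell$ is non-decreasing, $\ell(s)\uparrow\infty$.

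The case $t_s=\infty$ cannot occur. Indeed, \eqref{eq:isotropic-one-sided-threshold-comparison} and Theorem~\ref{thm:range-criticality} give $t_\infty^{\mathrm{iso}}\leq t_\infty^{\mathrm{os}}\leq 2\log 2$. Since $t_s\leq t_\infty^{\mathrm{iso}}$, $t_s$ is finite. The same chain also yields $t_c^{\mathrm{iso}}\leq t_c^{\mathrm{os}}\leq2\log2$.

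\emph{Lower bound $1/3$.} I apply the truncated-mean argument of Lemma~\ref{lem:isotropic-finite-mean-continuation} from level $0$. There $S_0=3$ deterministically, so $M(0)=3$, and the lemma gives $\mathbb E[S_t]\leq 3/(1-3t)<\infty$ for $t<1/3$. Hence $t_{\mathrm{sus}}^{\mathrm{iso}}\geq1/3$. I should check that the lemma's derivation, $M_K'\leq M_K^2$, applies at $s=0$ without further conditions; it does, since it only uses that the graft rate is the current size and the grafts are i.i.d.\ copies.

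\emph{Postcritical explosion.} Given $u>0$, pick $s<t_c^{\mathrm{iso}}$ and $K$ so large that
\[
s+2\log\bigl(1+1/m_{s,K}\bigr)<t_c^{\mathrm{iso}}+u/2.
\]
The proof of Proposition~\ref{prop:isotropic-endpoint-domination} gives more than the threshold bound: it gives $\mathbb P(L_{s+\tau_{s,K}+\varepsilon}=\infty)>0$ for every $\varepsilon>0$, where $\tau_{s,K}\leq 2\log(1+1/m_{s,K})$. Choose $\varepsilon$ so that $s+\tau_{s,K}+\varepsilon\leq t_c^{\mathrm{iso}}+u$. Monotonicity of $L$ in the level then gives $\mathbb P(S_{t_c^{\mathrm{iso}}+u}=\infty)>0$.

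The main obstacle is the divergence $\ell(s)\to\infty$. This is exactly where the finite-mean continuation lemma is needed, and the rest is bookkeeping.
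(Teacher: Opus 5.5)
Your proposal is correct and follows essentially the same route as the paper. You use Lemma~\ref{lem:isotropic-finite-mean-continuation} at level $0$ for the bound $1/3$ and to force $\mathbb E[L_s]\to\infty$ as $s\uparrow t_{\mathrm{sus}}^{\mathrm{iso}}$; Proposition~\ref{prop:isotropic-endpoint-domination} with Theorem~\ref{thm:range-criticality} for finiteness and the upper bounds; and then \eqref{eq:isotropic-explosion-upper}, via the restarted one-sided explosion, with $K\to\infty$ and $s\uparrow t_{\mathrm{sus}}^{\mathrm{iso}}$. The only difference is cosmetic: the paper fixes $t>t_{\mathrm{sus}}^{\mathrm{iso}}$ and chooses $s,K$ to obtain $\mathbb P(L_t=\infty)>0$ directly, which gives both the equality and the postcritical explosion in one step, whereas you pass to the limit in the threshold bound and then repeat the choice of $s,K,\varepsilon$ for the postcritical statement.
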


\begin{proof}
Lemma~\ref{lem:isotropic-finite-mean-continuation} at $s=0$, where
$S_0=3$, gives $t_{\mathrm{sus}}^{\mathrm{iso}}\geq1/3$.
Proposition~\ref{prop:isotropic-endpoint-domination} and
Theorem~\ref{thm:range-criticality} give
$t_{\infty}^{\mathrm{iso}}\leq t_c^{\mathrm{os}}\leq2\log2$; hence
$t_{\mathrm{sus}}^{\mathrm{iso}}<\infty$.

We first claim that $\mathbb{E}[S_t]\to\infty$ as
$t\uparrow t_{\mathrm{sus}}^{\mathrm{iso}}$. Otherwise
$\mathbb{E}[S_t]\leq C$ for all $t<t_{\mathrm{sus}}^{\mathrm{iso}}$ and some
finite $C$. Choose $s<t_{\mathrm{sus}}^{\mathrm{iso}}$ so close to
$t_{\mathrm{sus}}^{\mathrm{iso}}$ that
$t_{\mathrm{sus}}^{\mathrm{iso}}-s<(2C)^{-1}$. Since
$\mathbb{E}[S_s]\leq C$, Lemma~\ref{lem:isotropic-finite-mean-continuation}
gives $\mathbb{E}[S_{s+(2C)^{-1}}]<\infty$, although
$s+(2C)^{-1}>t_{\mathrm{sus}}^{\mathrm{iso}}$, contradicting the definition
of the susceptibility threshold. By reflection symmetry,
$\mathbb{E}[L_t]=(\mathbb{E}[S_t]-1)/2\to\infty$ as
$t\uparrow t_{\mathrm{sus}}^{\mathrm{iso}}$.

Fix $t>t_{\mathrm{sus}}^{\mathrm{iso}}$. Choose
$s<t_{\mathrm{sus}}^{\mathrm{iso}}$ so that
$2\log(1+1/\mathbb{E}[L_s])<t-s$. Since
$\mathbb{E}[L_s\wedge K]\uparrow\mathbb{E}[L_s]$, the same strict inequality
holds for all sufficiently large $K$ with $\mathbb{E}[L_s\wedge K]$ in
place of $\mathbb{E}[L_s]$. Proposition~\ref{prop:isotropic-endpoint-domination}
and Proposition~\ref{prop:restarted-one-sided} then give
$\mathbb{P}(L_t=\infty)>0$, hence $\mathbb{P}(S_t=\infty)>0$. Therefore
$t_{\infty}^{\mathrm{iso}}\leq t_{\mathrm{sus}}^{\mathrm{iso}}$, which
together with \eqref{eq:isotropic-easy-threshold-order} proves
\eqref{eq:isotropic-criticality}. The bounds
\eqref{eq:isotropic-critical-bounds} and the strictly postcritical assertion
have already been established.
\end{proof}

The next two results parallel their one-sided counterparts,
Propositions~\ref{prop:critical-no-infinity}
and~\ref{prop:exponential-moments}. Their proofs adapt the same
arguments using the isotropic comparison and the endpoint estimates of
Proposition~\ref{prop:isotropic-endpoint-tail}.

\begin{proposition}[No infinite avalanche at criticality]
\label{prop:isotropic-critical-finiteness}
At the isotropic critical level,
\[
        \mathbb{P}(S_{t_c^{\mathrm{iso}}}=\infty)=0.
\]
\end{proposition}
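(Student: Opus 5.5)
I plan to adapt the proof of Proposition~\ref{prop:critical-no-infinity} to the left endpoint $L_t$. By reflection symmetry and a union bound, $\mathbb P(S_{t_c}=\infty)\leq 2\mathbb P(L_{t_c}=\infty)$, where $t_c:=t_c^{\mathrm{iso}}=t_{\mathrm{sus}}^{\mathrm{iso}}$ by Theorem~\ref{thm:isotropic-criticality}. So it suffices to show that $a:=\lim_k h_k(t_c)=0$.

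\textbf{Step 1: upper bound on the mean.} I want $\ell(t)\leq C/(t_c-t)$ for $t<t_c$. Applying Lemma~\ref{lem:isotropic-finite-mean-continuation} at a level $s<t_c$ gives $\mathbb E[S_{s+u}]<\infty$ for every $u<M(s)^{-1}$. Since $s+u$ can then exceed $t_c$, this forces $M(s)^{-1}\leq t_c-s$, i.e.\ $M(s)\geq 1/(t_c-s)$. That is only a lower bound, so I need a growth inequality in the other direction, analogous to $m'\geq m^2/2$. Summing \eqref{eq:isotropic-endpoint-supersolution} over $k\leq n$ and using the one-sided computation of Proposition~\ref{prop:first-moment} gives
\[
\ell(t)-\ell(s)\geq\int_s^t\tfrac12\ell(u)(\ell(u)+1)\,du
\]
on $[0,t_c)$, after justifying the summation as in Lemma~\ref{lem:tail-differentiation}, which requires finite second moments. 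If that is awkward, I would use only the truncated sums $\sum_{k\le n}\Phi_k(h)$ and monotone convergence. This integral inequality, together with $\ell(t)\uparrow\infty$ (proved inside Theorem~\ref{thm:isotropic-criticality}), yields $1/\ell(t)\geq (t_c-t)/2$, i.e.\ $\ell(t)\leq 2/(t_c-t)$. Then, with $t_n:=t_c-C/n$, monotonicity of $k\mapsto h_k$ gives $n h_n(t_n)\leq \ell(t_n)\leq 2n/C$, so $h_n(t_n)\leq 2/C$.

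\textbf{Step 2: $h_n(t_n)\to a$.} This needs an upper bound on $h_n'$ on $[t_n,t_c]$ with a strong restoring term $-(n-1)h_n$. Here \eqref{eq:isotropic-endpoint-upper} is too crude, and it is only available below $t_c$ anyway. Instead I would use the exact compensator identity from the proof of Proposition~\ref{prop:isotropic-endpoint-tail}. Its $\Phi_k$ part is bounded, exactly as in the one-sided case, by $B_n-(n-1)h_n(u)$ with $B_n=1+\sum_{j<n}h_j(t_c)$. The extra right-graft term $\mathbb E[\mathbf 1_{\{L_u<n\}}\sum_{j=1}^{R_u}h_{n+j}(u)]$ is the problem: it is at most $\ell(u)h_n(u)$, which is large near $t_c$.

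\textbf{Main obstacle.} I expect controlling this right-graft term to be the hard step. My first attempt is to bound it by $\mathbb E[R_u\mathbf 1_{\{R_u\leq J\}}]h_{n+1}(t_c)+\mathbb E[R_u\mathbf 1_{\{R_u>J\}}]$, choosing $J=J(n)$. Note that $h_{n+j}(u)\leq h_{n+j}(t_c)\approx a$ is not small. The integral over $[t_n,t_c]$ has length $C/n$, and $\int_{t_n}^{t_c}\ell\,du\leq 2\log(\cdot)$ diverges only logarithmically, so its contribution after dividing by the restoring factor is
\[
O\Bigl(n^{-1}\int_{t_n}^{t_c}\ell(u)\,du\Bigr),
\]
which is $O(n^{-1}\log n)\to0$ provided $\int^{t_c}\ell<\infty$ locally or diverges slowly. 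Indeed $\int_{t_n}^{t_c}2/(t_c-u)\,du$ diverges, so I would instead cut at $t_c-1/n^2$ and handle the remaining sliver using $h_n\le1$. Alternatively, I would avoid Step 2 entirely by arguing as in Proposition~\ref{prop:isotropic-endpoint-domination}: restart a one-sided flow from $L_s\wedge K$ at $s<t_c$ close to $t_c$, and transfer $q^{s,K}_\infty(t_c-s)=0$. That, however, requires the reverse domination, which is not available. So I would pursue the direct estimate, conclude $a\leq 2/C$ for every $C>0$, and hence $a=0$.
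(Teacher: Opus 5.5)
\textbf{Step 1 is sound.} Each $\Phi_k(h)\geq0$, so by Tonelli you can sum \eqref{eq:isotropic-endpoint-supersolution} over $k$ using only $\ell<\infty$; no second moments are needed. Comparison together with $\ell\uparrow\infty$ then gives $\ell(t)\leq 2/(t_c-t)$, with $t_c:=t_c^{\mathrm{iso}}$. The paper obtains the same bound directly from \eqref{eq:isotropic-explosion-upper} with $s=t$ and $K\to\infty$.

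\textbf{Step 2 has a genuine gap, created by your reduction to the endpoint $L$.} On $\{L_u<n\}$ the right endpoint $R_u$ is unbounded. So the crossing intensity of $\{L\geq n\}$ contains the right-graft term $E_n(u):=\mathbb E[\mathbf 1_{\{L_u<n\}}\sum_{j=1}^{R_u}h_{n+j}(u)]$, and nothing in the proposal controls it. Your estimate treats $E_n$ as a bounded addition to $B_n$, hence divided by $n-1$. But integrating $h_n'\leq(n-1)(b_n-h_n)+E_n$ actually produces the remainder $\int_{t_n}^{r}e^{-(n-1)(r-u)}E_n(u)\,du$, and its weight is at least $e^{-1}$ for $u\in[r-1/n,r]$. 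Lemma~\ref{lem:isotropic-finite-mean-continuation} forces $\ell(u)\geq\frac12((t_c-u)^{-1}-1)$, so the bound $E_n\leq\ell h_n$ is not integrable at $t_c$ unless $h_n$ is already small. With your cut at $r=t_c-n^{-2}$, the resulting bound on the remainder is at least of order $h_n(t_n)\log n$, not $O(n^{-1}\log n)$.

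The sliver $[t_c-n^{-2},t_c]$ cannot be handled by $h_n\leq1$ either. That only bounds the increment $h_n(t_c)-h_n(t_c-n^{-2})$ by $1$, whereas proving this increment is $o(1)$ is essentially the statement itself. Since $\Phi_n\geq0$, the only a priori control you have is $\int_{t_n}^{t_c}E_n\leq h_n(t_c)-h_n(t_n)$, which is circular.

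\textbf{The missing idea is to run the crossing argument on the total size $S_t$, not on an endpoint.} This is what the paper does. Set $\rho_j(u):=\mathbb P(S_u\geq j)$. On $\{S_u<n\}$ only $S_u<n$ vertical lines are active. Counting every proposed graft as a potential crossing therefore bounds the crossing intensity of $\{S\geq n\}$ by $\mathbb E[S_u\mathbf 1_{\{S_u<n\}}]=\sum_{j=1}^{n-1}\rho_j(u)-(n-1)\rho_n(u)\leq(n-1)(b_n-\rho_n(u))$, where $b_n:=(n-1)^{-1}\sum_{j<n}\rho_j(t_c)$. The restoring term now comes for free, and there is no remainder.

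Left-continuity of $\rho_n$ at $t_c$ holds because fewer than $n$ lines are active before $S$ reaches $n$, and none of them carries a mark at the deterministic level $t_c$. Your Step~1 gives $\mathbb E[S_t]=2\ell(t)+1$, hence the Markov bound $\rho_n(t_n)\leq\mathbb E[S_{t_n}]/n\leq 4/C+1/n$. The one-sided argument then goes through verbatim and yields $\mathbb P(S_{t_c}=\infty)\leq 4/C$ for every $C>0$. The union-bound reduction to $L$ is exactly what destroys this structure.
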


\begin{proof}
Taking $s=t$ in \eqref{eq:isotropic-explosion-upper} and then letting
$K\to\infty$ gives, for $t<t_c^{\mathrm{iso}}$,
\[
\begin{aligned}
        t_c^{\mathrm{iso}}-t
        &\leq2\log\left(1+\frac1{\mathbb{E}[L_t]}\right),\\
        \mathbb{E}[S_t]
        =2\mathbb{E}[L_t]+1
        &\leq\frac{2}{e^{(t_c^{\mathrm{iso}}-t)/2}-1}+1
        \leq\frac{4}{t_c^{\mathrm{iso}}-t}+1.
\end{aligned}
\]
For $n\geq2$, set $\rho_n(t):=\mathbb{P}(S_t\geq n)$. Fix $C>0$ and, for
all sufficiently large $n$, let $t_n:=t_c^{\mathrm{iso}}-C/n$. Markov's
inequality gives
\[
        \rho_n(t_n)
        \leq\frac{\mathbb{E}[S_{t_n}]}n
        \leq\frac4C+\frac1n.
\]

Set
$b_n:=(n-1)^{-1}\sum_{j=1}^{n-1}\rho_j(t_c^{\mathrm{iso}})$. Before $S_t$
reaches $n$, proposed grafts occur at total rate $S_t$. Counting every
proposed graft as a possible crossing of level $n$ therefore bounds the
crossing intensity by
\[
        \mathbb E\left[S_u\mathbf 1_{\{S_u<n\}}\right]
        =\sum_{j=1}^{n-1}\rho_j(u)-(n-1)\rho_n(u).
\]
Consequently, the compensator identity and monotonicity in the level give,
for $t_n\leq s\leq r<t_c^{\mathrm{iso}}$,
\[
\begin{aligned}
        \rho_n(r)-\rho_n(s)
        &\leq
        \int_s^r\left(
        \sum_{j=1}^{n-1}\rho_j(u)-(n-1)\rho_n(u)\right)du \\
        &\leq
        (n-1)\int_s^r\bigl(b_n-\rho_n(u)\bigr)\,du.
\end{aligned}
\]
An integrating factor, followed by $r\uparrow t_c^{\mathrm{iso}}$, yields
\[
        \rho_n(t_c^{\mathrm{iso}})
        \leq
        e^{-(n-1)C/n}\rho_n(t_n)
        +b_n\bigl(1-e^{-(n-1)C/n}\bigr).
\]
Here $\rho_n(r)\uparrow\rho_n(t_c^{\mathrm{iso}})$ as
$r\uparrow t_c^{\mathrm{iso}}$: in the graphical construction, before the
range reaches $n$ at most $n-1$ vertical lines are active, and almost surely
none has a mark at the deterministic level $t_c^{\mathrm{iso}}$.
Moreover, both $\rho_n(t_c^{\mathrm{iso}})$ and $b_n$ converge to
$\mathbb{P}(S_{t_c^{\mathrm{iso}}}=\infty)$, the latter by Ces\`aro
convergence. Rearranging the preceding inequality and using
$\rho_n(t_n)\leq\rho_n(t_c^{\mathrm{iso}})$ therefore gives
\[
        \rho_n(t_n)\longrightarrow
        \mathbb{P}(S_{t_c^{\mathrm{iso}}}=\infty).
\]
The Markov bound above now gives
$\mathbb{P}(S_{t_c^{\mathrm{iso}}}=\infty)\leq4/C$. Since $C>0$ is
arbitrary, the probability is zero.
\end{proof}

\begin{proposition}[Subcritical isotropic exponential moments]
\label{prop:isotropic-exponential-moments}
For every $t_0<t_c^{\mathrm{iso}}$, there exists $\lambda_{t_0}>0$ such that
\[
        \sup_{0\leq t\leq t_0}
        \mathbb{E}[e^{\lambda_{t_0}S_t}]<\infty.
\]
\end{proposition}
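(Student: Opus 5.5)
We adapt the generating-function argument of Proposition~\ref{prop:exponential-moments}, this time applied to the total size $S_t$ and not to a tail system. Fix $t_0<t_c^{\mathrm{iso}}=t_{\mathrm{sus}}^{\mathrm{iso}}$; then $M(t_0):=\mathbb E[S_{t_0}]<\infty$ and $\mathbb E[S_t]\leq M(t_0)$ on $[0,t_0]$. For $r=1+\delta>1$ and $K\geq1$ we work with the truncated exponential observable
\[
        G_K(t):=\mathbb E\bigl[r^{S_t\wedge K}\bigr]-1 .
\]
This observable can change only while $S_t<K$, where jump rates are bounded by $K$. Hence $G_K$ is locally absolutely continuous and the compensator identity is legitimate, exactly as in Lemma~\ref{lem:isotropic-finite-mean-continuation}.

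The key step is a Riccati-type differential inequality. Conditionally on $S_t=x<K$, marks arrive at rate $x$. Each mark adjoins an independent graft whose range has size $S_t'$, so the new size is at most $x+S_t'$ by subadditivity of the union of intervals. Consequently
\[
        r^{(x+S_t')\wedge K}-r^{x}\leq r^{x}\bigl(r^{S_t'\wedge K}-1\bigr),
\]
which gives, for almost every $t$,
\[
        G_K'(t)\leq \mathbb E\bigl[S_t r^{S_t\wedge K}\mathbf 1_{\{S_t<K\}}\bigr]\,\mathbb E\bigl[r^{S_t\wedge K}-1\bigr].
\]
The first factor is the main obstacle. It is not bounded by a function of $G_K$ alone, because it carries an extra factor $S_t$. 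The paper faced the same issue and resolved it through the tail-weight decomposition $r^{u+v}-1=(r^u-1)+(r^v-1)+(r^u-1)(r^v-1)$. We would therefore run the argument on tails. Set $\rho_k(t)=\mathbb P(S_t\geq k)$ and $D_n(t)=\sum_{k\leq n}(r^k-1)\rho_k(t)$. Crossing level $k$ from $x<k$ requires a graft with $S'\geq k-x$, so $\rho_k'\leq \sum_{x<k}x\,\mathbb P(S=x)\rho_{k-x}$, a gain term of convolution type. This is valid for $t<t_{\mathrm{sus}}^{\mathrm{iso}}$ by the compensator argument of Proposition~\ref{prop:isotropic-endpoint-tail}. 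Writing $x\,\mathbb P(S=x)\leq\sum_{u\leq x}\mathbb P(S=x)$ and summing by parts bounds the gain by $\sum_{u+v=k}\rho_u\rho_v$, up to the boundary term $\rho_k\sum_{u\geq k}\rho_u$. This yields
\[
        D_n'\leq 2\,\mathbb E[S_t]\,D_n+D_n^2+(\text{lower order}),
\]
of the same form as \eqref{eq:exponential-riccati}.

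From here we copy the proof of Proposition~\ref{prop:exponential-moments}. Set $B(t)=\exp\{2\int_0^t\mathbb E[S_u]du\}\leq e^{2t_0M(t_0)}$ and $E_n=D_n/B$, so that $E_n'\leq BE_n^2$. Integration gives a bound on $D_n$ uniform in $n$ and in $t\leq t_0$ as soon as $\delta$ is small, for instance $\delta\lesssim e^{-2t_0M(t_0)}/t_0$. Note that $D_n(0)=\sum_{k\leq3}(r^k-1)=O(\delta)$ because $S_0=3$. Letting $n\to\infty$ and applying \eqref{eq:generic-W} to $W=S_t$ gives the claim with $\lambda_{t_0}=\log r$. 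The case $t_0=0$ is trivial.

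An alternative route to the exponential tail, if the summation by parts proves awkward, is to bound the endpoint $L_t$ with \eqref{eq:isotropic-endpoint-upper}. The argument of Proposition~\ref{prop:exponential-moments} applied to $D_n=\sum(r^k-1)h_k$ gives $D_n'\leq 2\ell D_n+D_n^2+\ell D_n$. The extra linear term $\ell D_n$ only changes $B$ to $\exp\{3\int\ell\}$. The bound then follows from $S_t=L_t+R_t+1$, Cauchy--Schwarz and $L_t\stackrel{\mathrm d}=R_t$. We regard this second route as the more robust one and would try it first.
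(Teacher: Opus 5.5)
Your second route is exactly the paper's proof. Inequality \eqref{eq:isotropic-endpoint-upper} gives $D_n'\leq3\ell D_n+D_n^2$ for $D_n=\sum_{k\leq n}(r^k-1)h_k$, with $D_n(0)=\delta$. The comparison with $B=\exp\{3\int_0^t\ell\}$ then bounds $\mathbb E[r^{L_t}]$ uniformly on $[0,t_0]$, and Cauchy--Schwarz with $L_t\stackrel{\mathrm d}=R_t$ finishes with $\lambda_{t_0}=\frac12\log r$. You are right to lead with it, because the first route, as written, has a genuine gap.

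The failing step is the summation by parts. The inequality
\[
\sum_{x<k}x\,\mathbb P(S=x)\,\rho_{k-x}\leq\sum_{u+v=k}\rho_u\rho_v+\rho_k\sum_{u\geq k}\rho_u
\]
is false for tail sequences. If $S$ is concentrated at $m\geq3$ and $k=2m$, the left side equals $m$ and the right side equals $1$.

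This is not a bookkeeping slip. Summed against $r^k-1$, your crude gain is at least $\mathbb E[S_t]\,\mathbb E[S_t(r^{S_t}-1)]$. Since $y(r^y-1)\big/\sum_{j\leq y}(r^j-1)$ grows linearly in $y$, this is not bounded by any function of $\sum_k(r^k-1)\rho_k$ and $\mathbb E[S_t]$. You have only moved the size-biased obstruction you identified for $G_K$ onto the tails.

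The one-sided computation escapes this obstruction because the graft position is averaged: the crossing rate from $x$ is $\sum_{j=0}^{x}q_{k-j}$ rather than $(x+1)q_{k-x}$. Bounding every position by the worst one, $\rho_{k-x}$, discards exactly this averaging.

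If you want a direct argument on $S_t$, keep the position. A graft at distances $d_L,d_R$ from the two ends ($d_L+d_R=x-1$) can lift the size from $x<k$ to at least $k$ only if $S'\geq k$, $L'\geq k-x+d_L$ or $R'\geq k-x+d_R$. So the crossing rate is at most $x\rho_k+2\sum_{j=k-x}^{k-1}h_j$. Use $h_j\leq\rho_{j+2}\leq\rho_j$ (since $R_t\geq1$) and exchange the order of summation as in Proposition~\ref{prop:tail-system}. This gives
\[
\rho_k(t)-\rho_k(s)\leq\int_s^t\Bigl(2\sum_{u+v=k}\rho_u(w)\rho_v(w)+\mathbb E[S_w]\rho_k(w)\Bigr)dw ,
\]
hence $D_n'\leq5\,\mathbb E[S_t]D_n+2D_n^2$ almost everywhere. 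This closes exactly as in Proposition~\ref{prop:exponential-moments} and bounds $\mathbb E[r^{S_t}]$ directly, without Cauchy--Schwarz.
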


\begin{proof}
Fix $r>1$ and, for $n\geq1$, set
\[
        D_n(t):=\sum_{k=1}^n(r^k-1)h_k(t).
\]
By Proposition~\ref{prop:isotropic-endpoint-tail}, for every
$t\leq t_0$,
\begin{align*}
        D_n'(t)
        &\leq
        \sum_{\substack{u,v\geq1\\u+v\leq n}}
        (r^{u+v}-1)h_u(t)h_v(t)+\ell(t)D_n(t)\\
        &\leq 3\ell(t)D_n(t)+D_n(t)^2.
\end{align*}
Indeed, the convolution term is bounded by
$2\ell(t)D_n(t)+D_n(t)^2$, exactly as in
\eqref{eq:exponential-riccati}. Set
\[
        B(t):=\exp\left\{3\int_0^t\ell(u)\,du\right\},
        \qquad
        H(t):=\int_0^tB(u)\,du.
\]
Writing $r=1+\delta$, one has $D_n(0)=\delta$. For
$E_n:=D_n/B$, the preceding inequality gives $E_n'\leq BE_n^2$ and hence
\[
        D_n(t)
        \leq
        \frac{\delta B(t)}{1-\delta H(t)},
        \qquad 0\leq t\leq t_0,
\]
whenever $\delta H(t_0)<1$. Since $t_0<t_c^{\mathrm{iso}}$,
$\ell(t_0)<\infty$ and therefore $H(t_0)<\infty$; choose
$\delta>0$ accordingly. Letting $n\to\infty$ and using  \eqref{eq:generic-W} with $L_t$ 
gives
\[
        \sup_{0\leq t\leq t_0}\mathbb{E}[r^{L_t}]<\infty.
\]
With $\lambda_{t_0}:=\frac12\log r$, reflection symmetry and
Cauchy--Schwarz give
\[
        \mathbb{E}[e^{\lambda_{t_0}S_t}]
        =e^{\lambda_{t_0}}
        \mathbb{E}\left[r^{(L_t+R_t)/2}\right]
        \leq
        e^{\lambda_{t_0}}
        \bigl(\mathbb{E}[r^{L_t}]\mathbb{E}[r^{R_t}]\bigr)^{1/2},
\]
uniformly for $0\leq t\leq t_0$.
\end{proof}

\section{Stationary product limits}
\label{sec:stationary-limits}

Let $\Gamma:=\{\mathrm{os},\mathrm{iso}\}$, and write
\[
        \mathcal N_{\mathrm{os}}:=\{0,1\},
        \qquad
        \mathcal N_{\mathrm{iso}}:=\{-1,0,1\},
        \qquad
        d_\gamma:=|\mathcal N_\gamma|.
\]
For $N\geq2$, let $\Lambda_N:=\Z/N\Z$ and consider the Bak--Sneppen
chain in which the sites in $i+\mathcal N_\gamma$ are refreshed when $i$
is the minimum site, with repetitions modulo $N$ ignored. Let
$r_N^\gamma(t)$ and $\eta_N^\gamma(t)$ denote the range and duration of a
finite-volume $t$-avalanche, and let $r_\infty^\gamma(t)$ and
$\eta_\infty^\gamma(t)$ be their infinite-volume counterparts. Thus
\[
        r_\infty^{\mathrm{os}}(t)=X_t+1,
        \qquad
        r_\infty^{\mathrm{iso}}(t)=S_t.
\]
Set
\[
\begin{aligned}
        R_N^\gamma(t)&:=\mathbb{E}[r_N^\gamma(t)],
        &D_N^\gamma(t)&:=\mathbb{E}[\eta_N^\gamma(t)],
        &P_N^\gamma(t)&:=\mathbb{P}(r_N^\gamma(t)=N),\\
        R_\infty^\gamma(t)&:=\mathbb{E}[r_\infty^\gamma(t)],
        &D_\infty^\gamma(t)&:=\mathbb{E}[\eta_\infty^\gamma(t)],
        &P_\infty^\gamma(t)&:=\mathbb{P}(r_\infty^\gamma(t)=\infty),
\end{aligned}
\]
and recall
\[
\begin{aligned}
        t_{\mathrm d}^\gamma
        &:=\sup\{t\geq0:D_\infty^\gamma(t)<\infty\},\\
        t_{\mathrm{sus}}^\gamma
        &:=\sup\{t\geq0:R_\infty^\gamma(t)<\infty\},\\
        t_\infty^\gamma
        &:=\inf\{t\geq0:P_\infty^\gamma(t)>0\}.
\end{aligned}
\]
Every site in the avalanche range is refreshed during one of its updates,
so $r_\infty^\gamma(t)\leq d_\gamma\eta_\infty^\gamma(t)$. Hence
\[
        t_{\mathrm d}^\gamma
        \leq t_{\mathrm{sus}}^\gamma
        \leq t_\infty^\gamma.
\]
\begin{theorem}[Stationary product limit]
\label{thm:stationary-product-limits}
For each $\gamma\in\Gamma$,
\[
        t_{\mathrm d}^\gamma
        =t_{\mathrm{sus}}^\gamma
        =t_\infty^\gamma
        =t_c^\gamma,
\]
where $t_c^{\mathrm{os}}$ and $t_c^{\mathrm{iso}}$ are the critical values
of Theorems~\ref{thm:range-criticality} and
\ref{thm:isotropic-criticality}, respectively. If
\[
        F_N^\gamma
        =\bigl(F_{N,x}^\gamma\bigr)_{x\in\Lambda_N}
\]
has law $\pi_N^\gamma$, then, for every fixed collection of distinct sites
$x_1,\ldots,x_\ell\in\mathbb Z$,
\[
        \bigl(
        F_{N,x_1}^\gamma,\ldots,F_{N,x_\ell}^\gamma
        \bigr)
        \xrightarrow[N\to\infty]{\mathrm d}
        \nu_{t_c^\gamma}^{\otimes\ell}.
\]
Equivalently, in the original uniform parametrisation, the limiting
fitnesses are independent and uniform on $[q_c^\gamma,1]$, where $q_c^\gamma:=1-e^{-t_c^\gamma}$.
\end{theorem}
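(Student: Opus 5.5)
The plan is to reduce the statement to two inputs: the sharpness results already proved, and the Meester--Znamenski reduction, which Appendix~\ref{app:mz-reduction} restates in a common form for both update rules. For the thresholds, we already know $t_{\mathrm d}^\gamma\leq t_{\mathrm{sus}}^\gamma\leq t_\infty^\gamma$. Theorems~\ref{thm:range-criticality} and~\ref{thm:isotropic-criticality} give $t_{\mathrm{sus}}^\gamma=t_\infty^\gamma=t_c^\gamma$, so it remains to show $t_{\mathrm d}^\gamma\geq t_{\mathrm{sus}}^\gamma$.

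For this we use the Meester--Znamenski range--duration comparison. In \cite{MZ04} it is derived via the graphical representation: every update of the $t$-avalanche is either the forced one or triggered at a level $s\leq t$ by a mark inside the current range. Hence $\mathbb E[\eta_\infty^\gamma(t)]$ is bounded by an integral expression in the means $R_\infty^\gamma(s)$, $s\leq t$, which is finite whenever $R_\infty^\gamma(t)<\infty$. If we wanted a concrete route, the count of marks on the vertical lines of the range up to level $t$ has expectation at most $\int_0^t R_\infty^\gamma(s)\,ds+1$. Each mark corresponds to a subavalanche whose duration splits recursively, and one closes the argument with a Gronwall-type bound. The exponential moments of Propositions~\ref{prop:exponential-moments} and~\ref{prop:isotropic-exponential-moments} provide ample integrability if a second-moment quantity appears in that bound. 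I would simply cite the appendix for this equality.

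For the stationary limit, we invoke the Meester--Znamenski theorem that equality of the three thresholds implies convergence of finite-dimensional marginals of $\pi_N^\gamma$ to $\nu_{t_c}^{\otimes\ell}$. Its finite-volume ingredients are the locking-threshold argument: the stationary law of $\Lambda_N$ is controlled by the comparisons $R_N^\gamma\leq R_\infty^\gamma$, $D_N^\gamma\to D_\infty^\gamma$ and $P_N^\gamma(t)\to P_\infty^\gamma(t)$. Below $t_c$, finite-mean avalanche durations give, via a renewal/ergodic argument, an asymptotic fraction of sites with fitness below $t$ tending to $0$. Above $t_c$, positivity of $P_\infty^\gamma$ forces the lower envelope of the minimum to exceed $t$ only rarely. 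Together these pin the empirical cutoff at $t_c$.

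Given the cutoff, the marginals above $t_c$ are memoryless exponential overshoots by Lemma~\ref{lem:level-regeneration}, and distinct fixed sites decouple because the last refreshes of distinct sites are distinct variables. The main obstacle is the upper bound on the cutoff. There one must transfer $P_\infty^\gamma(t)>0$ for $t>t_c$ into the statement that in finite volume the avalanche at level $t$ covers all of $\Lambda_N$ with probability bounded below, uniformly in $N$; I would first try to prove $\liminf_N P_N^\gamma(t)\geq P_\infty^\gamma(t)$ by coupling on large windows. The postcritical explosion statements proved above are exactly what this step needs. Multiplying the overshoot and decoupling statements then yields the product law, and the uniform reformulation follows from $u\mapsto-\log(1-u)$.
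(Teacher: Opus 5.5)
Your plan matches the paper's proof: the threshold equalities come from Theorems~\ref{thm:range-criticality} and~\ref{thm:isotropic-criticality} together with the range--duration identity of Proposition~\ref{prop:app-range-duration}, and the product limit is Proposition~\ref{prop:app-stationary-limit} followed by the change of variables $u\mapsto-\log(1-u)$. The step you single out as the main obstacle is immediate in the paper: the projection coupling of Lemma~\ref{lem:app-finite-infinite} gives $P_N^\gamma(t)=\mathbb{P}(r_\infty^\gamma(t)\geq N)\geq P_\infty^\gamma(t)$ for every $N$, and the overshoot and decoupling are handled by the locking-threshold representation of Proposition~\ref{prop:app-locking-representation} rather than directly by Lemma~\ref{lem:level-regeneration}.
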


\begin{proof}
Theorem~\ref{thm:range-criticality} gives
\[
        t_{\mathrm{sus}}^{\mathrm{os}}
        =t_\infty^{\mathrm{os}}
        =t_c^{\mathrm{os}},
        \qquad
        1\leq t_c^{\mathrm{os}}\leq2\log2,
\]
while Theorem~\ref{thm:isotropic-criticality} gives
\[
        t_{\mathrm{sus}}^{\mathrm{iso}}
        =t_\infty^{\mathrm{iso}}
        =t_c^{\mathrm{iso}},
        \qquad
        \frac13\leq t_c^{\mathrm{iso}}
        \leq t_c^{\mathrm{os}}.
\]
Proposition~\ref{prop:app-range-duration} yields
$t_{\mathrm d}^\gamma=t_{\mathrm{sus}}^\gamma$, and
Proposition~\ref{prop:app-stationary-limit} gives the convergence of the
stationary finite-dimensional marginals. Finally, if
$F=t_c^\gamma+E$ with $E\sim\operatorname{Exp}(1)$, then $e^{-E}$ is
uniform on $[0,1]$, so $1-e^{-F}$ is uniform on
$[1-e^{-t_c^\gamma},1]$.
\end{proof}

\appendix

\section{Finite-volume reduction and locking thresholds}
\label{app:mz-reduction}

This appendix adapts the finite-volume graphical representation,
range--duration identity, and locking-threshold argument of Meester and
Znamenski \cite{MZ04}. We spell out the finite--infinite volume coupling
and the regenerative estimates in the common notation needed for both
update rules. Fix $\gamma\in\Gamma$ and retain the notation of
Section~\ref{sec:stationary-limits}.

\subsection{Avalanche coupling and durations}

Translation invariance makes the law of a finite-volume avalanche
independent of its origin. Lemma~\ref{lem:level-regeneration} remains
valid for both update rules and on $\Lambda_N$. Conditionally on the
information revealed up to level $t$, the terminal fitnesses on the
avalanche range are independent with law $t+\operatorname{Exp}(1)$.

\begin{lemma}[Finite--infinite volume coupling]
\label{lem:app-finite-infinite}
For every $N\geq2$ and $t\geq0$, the finite- and infinite-volume
$\gamma$-avalanches may be coupled so that
\[
        r_N^\gamma(t)=r_\infty^\gamma(t)\wedge N.
\]
Consequently,
\[
        R_N^\gamma(t)
        =\mathbb{E}[r_\infty^\gamma(t)\wedge N]
        \uparrow R_\infty^\gamma(t),
        \qquad
        P_N^\gamma(t)
        =\mathbb{P}(r_\infty^\gamma(t)\geq N)
        \downarrow P_\infty^\gamma(t).
\]
On $\{r_\infty^\gamma(t)<N\}$ the two avalanches have the same duration.
\end{lemma}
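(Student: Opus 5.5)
The plan is to build both avalanches from one realisation of the refresh variables, so that the finite-volume dynamics and the infinite-volume dynamics agree until the infinite-volume range first wraps around the cycle.

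\emph{Shared variables.} On $\mathbb Z$, attach to each site $x$ an i.i.d.\ sequence of $\operatorname{Exp}(1)$ refresh variables, to be used in order each time $x$ is refreshed. Run the infinite-volume $t$-avalanche from the origin with these variables. Start the finite-volume avalanche on $\Lambda_N$ from the origin as well, and let it use the variables of a fixed set of $N$ consecutive sites containing the initial range. Concretely:
\begin{itemize}
\item for $\gamma=\mathrm{os}$ these are the sites $\{0,\ldots,N-1\}$;
\item for $\gamma=\mathrm{iso}$, the window can be chosen only after seeing how the range grows; one instead stops at the first time the infinite-volume range has $N$ sites and identifies that interval with $\Lambda_N$ through the projection $\mathbb Z\to\Lambda_N$.
\end{itemize}
Initial values above $t$ are irrelevant, by the remark after the definition of the avalanche.

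\emph{Agreement before wrap-around.} I would prove by induction on the update step that the two dynamics coincide as long as the infinite-volume range is an interval of fewer than $N$ sites, as given by Lemma~\ref{lem:range-interval} and its isotropic analogue. The induction has three ingredients.
\begin{itemize}
\item While the range has fewer than $N$ sites, the projection mod $N$ is injective on the range together with its neighbourhood $i+\mathcal N_\gamma$. This holds as long as the enlarged set has at most $N$ sites, and some care is needed exactly at size $N-1$ or $N$.
\item Sites outside the range carry fitnesses above $t$ in both models.
\item Hence the minimum site, the refreshed set and the drawn values coincide under the identification.
\end{itemize}
If the infinite-volume avalanche stops with range $r<N$, the finite one stops at the same step, so it has the same range and the same duration; this is the last assertion of the lemma. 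If the infinite-volume range ever reaches $N$ sites, then at that moment the finite-volume range is all of $\Lambda_N$, and it cannot exceed $N$. This gives $r_N^\gamma(t)=r_\infty^\gamma(t)\wedge N$, including the case $r_\infty^\gamma=\infty$.

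\emph{Growth across the size-$N$ threshold.} The main subtlety is that a single update can increase the range by two sites in the isotropic case, or one can worry about repetitions mod $N$. The event $\{r_\infty\ge N\}$ must therefore be matched with the finite range reaching $N$ even when the infinite range jumps from $N-1$ to $N+1$. At that step both refresh sets project onto sets covering all of $\Lambda_N$, with repetitions ignored, so the finite range is $N$. Using the same variables for a repeated site is harmless, because the coupling is only needed up to that step.

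\emph{Consequences.} The displayed formulas then follow directly:
\begin{itemize}
\item $R_N^\gamma(t)=\mathbb E[r_\infty^\gamma\wedge N]\uparrow R_\infty^\gamma(t)$ by monotone convergence;
\item $P_N^\gamma(t)=\mathbb P(r_\infty^\gamma\ge N)\downarrow\mathbb P(r_\infty^\gamma=\infty)$ by continuity from above.
\end{itemize}
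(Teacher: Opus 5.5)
Your proposal is correct and follows essentially the same route as the paper: drive both avalanches with the same refresh variables and use that the projection $\mathbb Z\to\Lambda_N$ is injective while the interval range has at most $N$ sites. The evolutions, ranges and durations then coincide before wrap-around, the projected range covers $\Lambda_N$ once $N$ sites are reached, and the limit statements follow by monotone convergence and continuity from above. One remark: your worry about the isotropic range jumping from $N-1$ to $N+1$ sites cannot arise. The selected minimum always lies in the current range, which has at least two (one-sided) or three (isotropic) sites, so each update adds at most one new site; your argument is unaffected.
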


\begin{proof}
Couple the systems with the same refresh variables and project the
infinite-volume process onto $\Lambda_N$. While its range has fewer than
$N$ sites, it is an interval on which the projection is injective, so the
evolutions agree. If it stops before reaching $N$ sites, the ranges and
durations coincide. Once it reaches $N$ sites, its interval projects onto
all of $\Lambda_N$, and the finite-volume range is $N$. The limit
statements follow from monotone convergence and continuity from above.
\end{proof}

For $s\geq0$, set
$\nu_{N,s}^\gamma
:=\mathcal L(\xi_N^\gamma(s),\eta_N^\gamma(s))$, the joint law of the
range set and duration of an $s$-avalanche on $\Lambda_N$ rooted at the
origin. Independently for $x\in\Lambda_N$, take a marked Poisson point
process
\[
        \Pi_{N,x}^\gamma
        =
        \sum_{j\geq1}
        \delta_{(\tau_{N,x,j}^\gamma,
        A_{N,x,j}^\gamma,h_{N,x,j}^\gamma)}
        \quad\text{with intensity }
        ds\,\nu_{N,s}^\gamma(dA,dh)
\]
on $(0,\infty)\times2^{\Lambda_N}\times\N$. Here
$\tau_{N,x,j}^\gamma$ is the level of the mark,
$A_{N,x,j}^\gamma$ is the range set of the attached avalanche relative
to its origin, and $h_{N,x,j}^\gamma$ is its duration.

At a mark $(x,s,A,h)$, draw horizontal arrows from $(x,s)$ to
$(x+y,s)$ for every $y\in A$, with addition modulo $N$, and allow
upward motion along the vertical lines. Starting from
$(\mathcal N_\gamma,0)$, with the seed projected onto $\Lambda_N$ and
repetitions removed, let $\xi_N^{\gamma,\mathrm{gr}}(t)$ be the set of
sites reachable at level $t$, and define
\[
        \eta_N^{\gamma,\mathrm{gr}}(t)
        :=
        1+
        \sum_{x\in\Lambda_N}
        \sum_{\tau_{N,x,j}^\gamma\leq t}
        \mathbf 1_{\{
        (x,\tau_{N,x,j}^\gamma)\text{ is reachable}\}}
        h_{N,x,j}^\gamma.
\]
The initial term counts the forced update, while every reachable mark
contributes the duration of the subavalanche attached to it. Since
$\Lambda_N$ is finite and every vertical line carries a rate-one
Poisson process, almost surely there are only finitely many marks below
each fixed level.

\begin{lemma}[Finite-volume graphical representation]
\label{lem:app-graphical}
The graphical and physical avalanche families have the same law:
\[
        \bigl(
        \xi_N^{\gamma,\mathrm{gr}}(t),
        \eta_N^{\gamma,\mathrm{gr}}(t)
        \bigr)_{t\geq0}
        \stackrel{\mathrm d}=
        \bigl(
        \xi_N^\gamma(t),
        \eta_N^\gamma(t)
        \bigr)_{t\geq0}.
\]
In particular, their range and duration are non-decreasing in the
level.
\end{lemma}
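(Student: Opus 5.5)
The plan is to iterate the level-regeneration property, Lemma~\ref{lem:level-regeneration}, on $\Lambda_N$, as in the infinite-volume construction of Section~\ref{sec:avalanche-range}, but now tracking durations as well as ranges. The finite-volume setting is actually easier. Every vertical line carries a rate-one Poisson process, so almost surely only finitely many marks lie below any fixed level. Hence no truncation at $\sigma_n$ and no monotone limit are needed.

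First, I would fix a level $t$ at which the physical $t$-avalanche has range $I\subseteq\Lambda_N$ and duration $h$. Lemma~\ref{lem:level-regeneration}, applied on $\Lambda_N$ for rule $\gamma$, gives the following: conditionally on $\mathcal H_t$, the terminal excesses on $I$ are independent $\operatorname{Exp}(1)$ variables. Raising the threshold, the next triggering level $t+\tau$ therefore arrives at rate $|I|$, at a site $x$ that is uniform on $I$. I would then check that the physical $(t+\tau)$-avalanche coincides with the $t$-avalanche followed by a subavalanche started at $x$ at level $t+\tau$. Its fitness configuration exceeds $t+\tau$ everywhere, and the subavalanche is independent of $\mathcal H_t$, with law $\nu_{N,t+\tau}^\gamma$ after translation by $x$. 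The key bookkeeping point concerns the duration. The updates of the subavalanche are exactly the additional updates performed by the $(t+\tau)$-dynamics, because the earlier updates are unchanged: every value revealed below $t$ is also below $t+\tau$, and the order of minima is preserved. The new range is $I\cup(x+A)$ and the new duration is $h+h'$. This matches the graphical rule exactly: a reachable mark at $(x,t+\tau)$ adds the arrows to $x+A$ and contributes $h'$ to $\eta_N^{\gamma,\mathrm{gr}}$.

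Second, I would compare this with the graphical side. The reachable set at level $t$ in the graphical construction is $\xi_N^{\gamma,\mathrm{gr}}(t)$. By the memoryless property of the independent Poisson processes $\Pi_{N,x}^\gamma$, the next reachable mark above $t$ arrives at rate $|\xi_N^{\gamma,\mathrm{gr}}(t)|$ and is uniform over the reachable sites. Its mark is drawn from $\nu_{N,s}^\gamma$ at its level $s$, independently of the past. Both processes are therefore pure-jump Markov processes in the level variable. They have the same state space, the pairs (range set, cumulative duration), the same jump rates and the same jump kernels, and they start from the same point: $(\mathcal N_\gamma,1)$ right after the forced update at level $0$. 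Because only finitely many jumps occur on bounded level intervals, the rates are bounded by $N$ and there is no explosion. The finite-dimensional laws then agree by the standard uniqueness of such jump processes, or equivalently by induction on the number of marks below $t$ together with the strong Markov property. This gives equality in law of the whole level-indexed family.

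Monotonicity is then immediate. On the graphical side reachability only grows with the level, and $\eta^{\mathrm{gr}}$ only adds non-negative terms. The identity in law transfers this property to the physical family.

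I expect the main obstacle to be the first step: justifying rigorously that the $(t+\tau)$-avalanche decomposes as the $t$-avalanche followed by a single subavalanche. Two things must be checked. First, the finite-volume wrap-around must not disturb the argument; the subavalanche range $A$ is taken modulo $N$ and may overlap $I$, which the union formula handles. Second, the subavalanche must really be independent with the correct level law. To settle this, I would spell out the exploration $\sigma$-field $\mathcal H_t$ and use lack of memory exactly as in the proof of Lemma~\ref{lem:level-regeneration}. Fresh refresh variables used by the subavalanche are, by construction, not yet revealed.
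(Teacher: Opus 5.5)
Your proposal is correct and follows essentially the same route as the paper: explore reachable marks in increasing order of level, use level regeneration to match the rate $|I|$, the uniform base point and the independent $\nu_{N,s}^\gamma$-distributed mark with the next physical subavalanche, then conclude by induction over the finitely many marks below each fixed level, with monotonicity coming from reachability. Your explicit duration bookkeeping (the new duration $h+h'$, the forced update counted once, the common starting point $(\mathcal N_\gamma,1)$) spells out details that the paper leaves implicit.
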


\begin{proof}
Explore the reachable marks in increasing order of level. Suppose that
the marks below the current level have been explored and that the
current reachable set is $I\subseteq\Lambda_N$. The next mark carried
by a vertical line through $I$ occurs at total rate $|I|$, its base
point is uniform on $I$, and, conditionally on its level $s$, its mark
$(A,h)$ is independent of the explored past with law
$\nu_{N,s}^\gamma$.

By level regeneration, this is exactly the conditional law of the next
subavalanche appearing when the threshold of the physical avalanche is
raised: its origin is uniform on the current range, and its translated
range and duration are those of an independent $s$-avalanche.
Induction over the finitely many reachable marks below each fixed level
identifies the two families, while monotonicity follows directly from
reachability.
\end{proof}

\begin{proposition}[Range--duration identity]
\label{prop:app-range-duration}
For every $N\geq2$ and $t\geq0$,
\begin{equation}
\label{eq:app-finite-duration}
        D_N^\gamma(t)
        =
        1+\int_0^tR_N^\gamma(s)D_N^\gamma(s)\,ds
        =
        \exp\left\{\int_0^tR_N^\gamma(s)\,ds\right\}.
\end{equation}
If $R_\infty^\gamma(t)<\infty$, then
\begin{equation}
\label{eq:app-infinite-duration}
        D_\infty^\gamma(t)
        =
        \exp\left\{\int_0^tR_\infty^\gamma(s)\,ds\right\}<\infty.
\end{equation}
Consequently,
\[
        t_{\mathrm d}^\gamma=t_{\mathrm{sus}}^\gamma,
\]
and, for every $t<t_{\mathrm{sus}}^\gamma$,
\begin{equation}
\label{eq:app-duration-convergence}
        D_N^\gamma(t)\longrightarrow D_\infty^\gamma(t).
\end{equation}
\end{proposition}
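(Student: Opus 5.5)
Take expectations in the finite-volume graphical representation of Lemma~\ref{lem:app-graphical}, which expresses the duration as the forced update plus the durations of all reachable subavalanches. Conditionally on the explored genealogy below level $s$, reachable marks arrive at rate $|\xi_N^{\gamma,\mathrm{gr}}(s)|$. The mark at level $s$ carries a duration $h$ whose law is the second marginal of $\nu_{N,s}^\gamma$, independent of the past. The compensator of
\[
\sum_{x}\sum_{\tau\le t}\mathbf 1_{\{\text{reachable}\}}h
\]
is therefore $\int_0^t |\xi_N^\gamma(s)|\,\mathbb E[\eta_N^\gamma(s)]\,ds$. This step needs integrability, which finite volume supplies: the reachable set has at most $N$ sites, so the mark rate is at most $N$. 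The quantity $D_N^\gamma(s)$ must also be shown finite. Finite durations can be obtained as in Lemma~\ref{lem:range-interval}, since at most $N$ sites are ever below the threshold and each block of $N$ updates ends the avalanche with probability at least $e^{-d_\gamma t N}$. This gives a geometric tail for $\eta_N^\gamma(t)$ and a bound that is locally uniform in $t$.

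Taking expectations then gives the integral identity
\[
D_N^\gamma(t)=1+\int_0^t R_N^\gamma(s)D_N^\gamma(s)\,ds.
\]
It uses Fubini and independence of the mark duration from the current range, with $\mathbb E|\xi_N^\gamma(s)|=R_N^\gamma(s)$ by Lemma~\ref{lem:app-graphical}. The function $D_N^\gamma$ is locally bounded and $R_N^\gamma\le N$, so this linear Volterra equation has the unique solution $\exp\{\int_0^tR_N^\gamma\}$. That gives \eqref{eq:app-finite-duration}.

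For the infinite-volume statement, let $N\to\infty$ using Lemma~\ref{lem:app-finite-infinite}: $R_N^\gamma(s)\uparrow R_\infty^\gamma(s)$, and by monotone convergence $D_N^\gamma(t)\to\exp\{\int_0^tR_\infty^\gamma\}$. Relating this limit to $D_\infty^\gamma(t)$ is the main obstacle. When $R_\infty^\gamma(t)<\infty$, the range $r_\infty^\gamma(t)$ is almost surely finite, and on $\{r_\infty^\gamma(t)<N\}$ the two durations coincide. Hence $\eta_N^\gamma(t)\to\eta_\infty^\gamma(t)$ almost surely, and Fatou gives
\[
D_\infty^\gamma(t)\le\liminf_N D_N^\gamma(t)=\exp\Bigl\{\int_0^tR_\infty^\gamma\Bigr\}.
\]
For the reverse inequality I would not attempt uniform integrability. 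Instead, rerun the compensator argument directly in the infinite-volume graphical representation, stopped at $\sigma_n$, the first level at which the range has $n$ sites. Before $\sigma_n$ fewer than $n$ lines are active, so the argument goes through as above. Let $n\to\infty$, and also truncate the durations by $\wedge K$. This gives
\[
\mathbb E[\eta_\infty^\gamma(t\wedge\sigma_n)]=1+\mathbb E\int_0^{t\wedge\sigma_n}r_\infty^\gamma(s)D_\infty^\gamma(s)\,ds.
\]
Monotone convergence, using $\sigma_n\to\infty$ almost surely because the range at level $t$ is finite, then yields the same linear equation for $D_\infty^\gamma$, now with values in $[0,\infty]$. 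Iterating it, or simply combining it with the Fatou bound, shows that $D_\infty^\gamma(t)$ is finite and equal to the exponential.

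Finally, \eqref{eq:app-infinite-duration} shows that $t<t_{\mathrm{sus}}^\gamma$ implies $D_\infty^\gamma(t)<\infty$, because $\int_0^tR_\infty^\gamma\le tR_\infty^\gamma(t)<\infty$ by monotonicity. Hence $t_{\mathrm d}^\gamma\ge t_{\mathrm{sus}}^\gamma$, and together with the reverse inequality proved earlier this gives $t_{\mathrm d}^\gamma=t_{\mathrm{sus}}^\gamma$. For $t<t_{\mathrm{sus}}^\gamma$, the convergence \eqref{eq:app-duration-convergence} is the monotone convergence of the exponents established above.
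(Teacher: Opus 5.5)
Your proof is correct. It matches the paper in finite volume but takes a different route for the infinite-volume identity. In finite volume, your Gronwall uniqueness argument for the linear Volterra equation (using $R_N^\gamma\le N$ and local boundedness of $D_N^\gamma$) replaces the paper's proof that $R_N^\gamma$ is continuous; otherwise the two arguments coincide.

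For \eqref{eq:app-infinite-duration}, the paper never lets $N\to\infty$. It decorates each mark with an independent genealogical avalanche and keeps at most $m$ nested graft generations. This gives the well-founded recursion $D_\infty^{\gamma,[m+1]}(s)=1+\int_0^sR_\infty^\gamma(u)D_\infty^{\gamma,[m]}(u)\,du$, whose solution is explicitly $\sum_{j\le m}H(s)^j/j!$. Finiteness and the exponential formula then follow together by monotone convergence.

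You proceed in two steps instead:
\begin{itemize}
\item The upper bound $D_\infty^\gamma(t)\le\exp\{\int_0^tR_\infty^\gamma\}$ comes from the finite-volume formula via Lemma~\ref{lem:app-finite-infinite} and Fatou. The estimate $\mathbb E[\eta_\infty^\gamma(t)\mathbf 1_{\{r_\infty^\gamma(t)<N\}}]\le D_N^\gamma(t)$ already suffices here, so you do not need all $N$ coupled on one probability space.
\item The untruncated linear equation comes from a compensator argument stopped at $\sigma_n$.
\end{itemize}
Your route reuses the finite-volume result and yields \eqref{eq:app-duration-convergence} at no extra cost. The paper's route is self-contained in infinite volume.

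Two small corrections.

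First, in your route the Fatou bound is essential, not an alternative to iteration. In $[0,\infty]$ the equation $D=1+\int_0^\cdot R_\infty^\gamma D$ does not determine its solution: the function with $D(0)=1$ and $D=\infty$ on $(0,t]$ also solves it. Iteration alone therefore only gives $D_\infty^\gamma\ge\exp\{\int_0^\cdot R_\infty^\gamma\}$. The a priori finite upper bound is what selects the correct solution, which is exactly the role the generation truncation plays in the paper.

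Second, the claim that $\sigma_n\to\infty$ almost surely is false in general, since the range can explode at a finite level with positive probability. What you actually use, and what holds, is $t\wedge\sigma_n=t$ once $n>r_\infty^\gamma(t)$.
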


\begin{proof}
For fixed $N$ and $t<\infty$, $\eta_N^\gamma(t)$ is integrable. Indeed,
during any block of $N$ updates, the event that every refreshed value
exceeds $t$ has conditional probability at least $e^{-d_\gamma tN}$; on
this event no new value below $t$ is created and the selected minimum
removes at least one such value at each update, so the avalanche stops
within the block.

By Lemma~\ref{lem:app-graphical},
\[
        \eta_N^\gamma(t)
        =
        1+
        \sum_{x\in\Lambda_N}
        \sum_{\tau_{N,x,j}^\gamma\leq t}
        \mathbf 1_{\{x\in
        \xi_N^\gamma(\tau_{N,x,j}^\gamma-)\}}
        h_{N,x,j}^\gamma.
\]
The indicator is predictable, and conditionally on a mark at height $s$
its duration is independent of the past with mean $D_N^\gamma(s)$. The
compensation formula gives the first equality in
\eqref{eq:app-finite-duration}. Moreover,
\[
        0\leq
        R_N^\gamma(s+h)-R_N^\gamma(s)
        \leq N(1-e^{-Nh}),
\]
since the range cannot change if no active vertical line carries a mark
between levels $s$ and $s+h$. Thus $R_N^\gamma$ is continuous, and the
scalar integral equation gives the exponential formula.

Assume that $R_\infty^\gamma(t)<\infty$. For every $s\leq t$ the
$s$-avalanche has finite range almost surely; the block argument used
for finite-volume integrability then also shows that its duration is
finite almost surely. In the infinite-volume graphical construction,
decorate each mark at level $u$ with an independent copy of the full
genealogical $u$-avalanche, including its duration. Let
$\eta_\infty^{\gamma,[m]}(s)$ be the duration obtained by retaining the
initial forced update and at most $m$ nested graft generations. Then
\[
        \eta_\infty^{\gamma,[0]}(s)=1,
        \qquad
        \eta_\infty^{\gamma,[m]}(s)
        \uparrow\eta_\infty^\gamma(s).
\]

Set
$D_\infty^{\gamma,[m]}(s)
:=\mathbb{E}[\eta_\infty^{\gamma,[m]}(s)]$. At level $u$, the expected
number of active base points is $R_\infty^\gamma(u)$, while the
truncated duration attached to each mark is independent of the explored
past and has mean $D_\infty^{\gamma,[m]}(u)$. Compensation therefore
gives
\[
        D_\infty^{\gamma,[m+1]}(s)
        =
        1+\int_0^s
        R_\infty^\gamma(u)D_\infty^{\gamma,[m]}(u)\,du.
\]
Writing $H(s):=\int_0^sR_\infty^\gamma(u)\,du$, induction yields
\[
        D_\infty^{\gamma,[m]}(s)
        =
        \sum_{j=0}^m\frac{H(s)^j}{j!}.
\]
Monotone convergence now proves
\eqref{eq:app-infinite-duration}. Conversely,
$r_\infty^\gamma(t)\leq d_\gamma\eta_\infty^\gamma(t)$, so finite mean
duration implies finite mean range. Hence
$t_{\mathrm d}^\gamma=t_{\mathrm{sus}}^\gamma$.

Finally, Lemma~\ref{lem:app-finite-infinite} gives
$R_N^\gamma(s)\uparrow R_\infty^\gamma(s)$ for every $s$, and for
$s\leq t<t_{\mathrm{sus}}^\gamma$ these functions are bounded by
$R_\infty^\gamma(t)$. Dominated convergence in the exponential formulas
proves \eqref{eq:app-duration-convergence}.
\end{proof}

\subsection{Locking thresholds and the stationary law}

For $i\in\Lambda_N$, write
\[
        \mathcal U_{N,\gamma}(i)
        :=i+\mathcal N_\gamma
\]
modulo $N$, with repetitions removed, and let
\[
        F_N^\gamma(n)
        =(F_{N,x}^\gamma(n))_{x\in\Lambda_N}
\]
be the finite-volume fitness chain. Ties, which can occur only in a prescribed
deterministic initial configuration, are resolved by a fixed deterministic
rule.

\begin{proposition}[Finite-volume ergodicity]
\label{prop:app-ergodicity}
For every $N\geq2$, the $\gamma$-chain has a unique stationary law
$\pi_N^\gamma$. From every deterministic initial configuration, its law
converges to $\pi_N^\gamma$ in total variation.
\end{proposition}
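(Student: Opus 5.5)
The plan is to treat the $\gamma$-chain as a Markov chain on the continuous state space $[0,\infty)^{\Lambda_N}$ and prove a Doeblin-type minorization. We will exhibit a finite number $n_0$ of steps, a constant $\varepsilon>0$ and a probability measure $\varphi$ such that
\[
        \mathbb P_F\bigl(F_N^\gamma(n_0)\in\cdot\bigr)\geq\varepsilon\,\varphi(\cdot)
\]
uniformly over all initial configurations $F$. Uniqueness of $\pi_N^\gamma$ and geometric convergence in total variation then follow from the standard coupling argument. Existence of a stationary law also comes out of the same bound, since a uniformly minorized kernel is uniformly ergodic. Ties, which occur only at time $0$ for a deterministic start, do not matter: after one step the refreshed values are continuous, so ties are almost surely absent, and the tie-breaking rule only enters the first transition.

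The minorizing event is designed to force every site to be refreshed, and to make the final configuration consist of fresh independent exponentials in a way that does not depend on the start. Fix a large level $T$ and consider the following event. At each step, every refreshed value that will later be selected lands in $[0,\delta]$ with $\delta$ small, and the values that should survive to the end are left unconstrained. The rule is that the site just refreshed to the right (one-sided case) receives a tiny value. That site becomes the next minimum, so the minimum walks deterministically around the cycle $0,1,2,\ldots$ and every site gets refreshed.

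Concretely, let $i_0$ be the initial minimum site and require that, for $N$ consecutive steps, the right neighbour of the current minimum is refreshed to a value below all current values. In the isotropic case we additionally impose nothing on the left neighbour beyond its law. The difficulty is that "below all current values" depends on $F$. The way around this is to make the required values strictly decreasing small numbers, for instance in $\bigl(\delta 2^{-j-1},\delta 2^{-j}\bigr)$ at step $j$. These lie below the initial minimum only if that minimum exceeds $\delta$, which need not hold.

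To handle this, first run a preliminary phase. With probability bounded below, in at most $N$ steps each visited minimum is refreshed and the new values are larger than $\delta$, apart from the chained small values. After the sweep, the minimum site carries the last tiny value. Every other site was last refreshed during the sweep by an independent $\operatorname{Exp}(1)$ variable, conditioned only on events whose probability is bounded below uniformly in $F$. So the law after $n_0\approx 2N$ steps has a density bounded below by $\varepsilon$ times a fixed product density on a fixed box. This gives the minorization.

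The main obstacle is making the sweep independent of the initial configuration, since the order of minima depends on the unknown initial values. I would handle it by the following trick. Once the chain selects some site $i$ and refreshes $i+1$ (and $i-1$) with a value smaller than every value ever present, the subsequent evolution on the event is deterministic in terms of positions, regardless of where $i$ was. Requiring values below $\min F$ is not uniform, however. So I would condition instead on values in $(0,\eta)$, where $\eta$ is chosen after a first full sweep has replaced all sites by fresh values bounded below by $\eta$. That event has probability at least $(e^{-\eta}\cdots)^{N}$, which is uniform.

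A cleaner alternative that I would try first is to avoid the sweep analysis and note two facts. First, the chain is weak Feller on a state space whose values can be truncated. Second, the level-regeneration property of Lemma~\ref{lem:level-regeneration} says that after the $t$-avalanche started from any configuration ends, the values on its range are $t+\operatorname{Exp}(1)$. Choosing $t$ so large that the range is everything with positive probability, uniformly in $F$, would then give the minorization directly by Lemma~\ref{lem:app-finite-infinite}.
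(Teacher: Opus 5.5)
Your Doeblin strategy, a minorization $P^{n_0}(F,\cdot)\geq\varepsilon\varphi$ uniform in $F$, is a legitimate route and differs from the paper's. The minorization you actually construct, however, is not uniform in $F$.

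\textbf{The gap.} After your sweep, the last tiny value sits at the final minimum. Its position is $i_0+N\equiv i_0$, where $i_0$ is the minimum at the start of the sweep, and $i_0$ depends on $F$ and on the preliminary phase. Your lower-bound box is therefore $\{x_{i_0}<\delta,\ x_j\in[\delta,T]\text{ for }j\neq i_0\}$. These boxes are pairwise disjoint for different $i_0$, so no single $\varphi$ serves all starting points, and the coupling argument cannot be run. Two further problems:
\begin{itemize}
\item The preliminary phase takes ``at most $N$ steps'', so the sweep does not start at a deterministic time and $n_0$ is not well defined.
\item The ``main obstacle'' paragraph is circular. A first full sweep replacing every site by fresh values above $\eta$ cannot be forced uniformly before you already know that all values exceed $\eta$.
\end{itemize}

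\textbf{The repair.} For $N\leq d_\gamma$, one update refreshes everything and the claim is trivial. For $N>d_\gamma$, fix $\delta>0$ and proceed in two phases.
\begin{itemize}
\item Clearing. Require that every refreshed value in the first $N$ updates exceed $\delta$; this has probability $e^{-d_\gamma\delta N}$. While some value is at most $\delta$, the selected minimum is such a value and is removed, and no new small value is created. Hence at time $N$ all values exceed $\delta$. Sites never selected keep arbitrary initial values, which is harmless.
\item Sweep. Run your sweep for a fixed number of updates. At each update, the refreshed site ahead of the minimum falls below $\delta$ and every other refreshed value exceeds $\delta$; in the last update, all refreshed values exceed $\delta$.
\end{itemize}
At every time at most one value lies below $\delta$, so decreasing tiny values are unnecessary. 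The sweep probability is a positive constant depending only on $N,\delta,\gamma$. Each site's last refresh variable is conditioned only to exceed $\delta$. So the configuration at the fixed time $n_0\leq2N$ has conditional law exactly $\rho_{N,\delta}=(\delta+\operatorname{Exp}(1))^{\otimes\Lambda_N}$, whatever $i_0$ is, and $P^{n_0}(F,\cdot)\geq\varepsilon\rho_{N,\delta}$ holds uniformly in $F$.

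\textbf{The ``cleaner alternative''.} It fails as stated. Level regeneration gives the law $\rho_{N,t}$ at the terminal time of a spanning avalanche. That time is random and unbounded, so you obtain a regeneration time, not a fixed-time minorization. Weak Feller continuity and Lemma~\ref{lem:app-finite-infinite} play no role, and no largeness of $t$ is needed. The argument works only after conditioning on a spanning avalanche of prescribed duration, which is exactly the repaired sweep.

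\textbf{Comparison with the paper.} The paper uses the same clearing block only to reach $\{F_x>s\text{ for all }x\}$ in finite mean time. It then takes terminal times of spanning $s$-avalanches as regeneration times, checks finite mean cycle length and span one, and invokes the regenerative limit theorem. The repaired Doeblin argument needs no renewal theory and gets aperiodicity for free. It also gives uniform geometric convergence $\|P^n(F,\cdot)-\pi_N^\gamma\|_{\mathrm{TV}}\leq(1-\varepsilon)^{\lfloor n/n_0\rfloor}$. The paper's route, in exchange, builds the avalanche cycle structure it reuses in the locking-threshold renewal--reward arguments.
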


\begin{proof}
If $N\leq d_\gamma$, every update refreshes the whole system, so after
one step the configuration is a collection of independent exponential
variables. The conclusion is immediate. Assume henceforth that
$N>d_\gamma$, and fix $s>0$.

From every initial configuration, the chain reaches the set
$\{F_x>s\text{ for all }x\in\Lambda_N\}$ in finite mean time. Indeed,
in each block of $N$ updates, with conditional probability at least
$e^{-d_\gamma sN}$, every refreshed value exceeds $s$. On this event
no new value below $s$ is created, while each update removes the
current minimum; hence all values below $s$ disappear within the block.

Starting from a configuration above $s$, the successive $s$-avalanches
have iid range--duration pairs. Moreover, $P_N^\gamma(s)>0$, since a
prescribed finite sequence of refresh values can force an avalanche to
visit every site. Thus a spanning $s$-avalanche occurs after a
geometric number of trials. Each $s$-avalanche has finite mean duration:
the preceding block argument, applied while the avalanche is running,
gives a geometric tail on the scale of $N$. Hence the terminal time of
the first spanning $s$-avalanche has finite mean.

At such a terminal time every site has been refreshed, and level
regeneration gives the configuration the law
\[
        \rho_{N,s}:=
        (s+\operatorname{Exp}(1))^{\otimes\Lambda_N},
\]
independently of the past. The successive terminal times of spanning
$s$-avalanches are therefore regeneration times. After the first one,
the cycles are iid and have finite mean length.

Their length distribution has span one. Indeed, for some $m\geq1$ a
spanning $s$-avalanche of duration $m$ has positive probability. It
may instead be preceded by a one-step nonspanning $s$-avalanche, which
has probability $e^{-d_\gamma s}>0$: all refreshed values then exceed
$s$, and $N>d_\gamma$ prevents spanning in one step. Successive
$s$-avalanches have the same iid range--duration law, so cycle lengths
$m$ and $m+1$ both occur with positive probability.

By the regenerative limit theorem
\cite[Theorem~VI.1.2 and Corollary~VI.1.5, pp.~170--171]{Asmussen2003},
there is a stationary regenerative law $\pi_N^\gamma$, and the
finite-mean, span-one cycle structure gives
\[
        \bigl\|
        \mathcal L(F_N^\gamma(n))-\pi_N^\gamma
        \bigr\|_{\mathrm{TV}}
        \longrightarrow0 .
\]

\end{proof}

Assume henceforth that the initial fitnesses are independent
$\operatorname{Exp}(1)$ variables. Set $Y_{N,x}^\gamma(0)=0$. If
$i_N^\gamma(n)$ is the site of minimum fitness and
\[
        M_N^\gamma(n)
        :=F_{N,i_N^\gamma(n)}^\gamma(n),
\]
define
\begin{equation}
\label{eq:app-locking-recursion}
        Y_{N,x}^\gamma(n+1)
        =
        \begin{cases}
        0,
        &x\in\mathcal U_{N,\gamma}(i_N^\gamma(n)),\\
        \max\{Y_{N,x}^\gamma(n),M_N^\gamma(n)\},
        &x\notin\mathcal U_{N,\gamma}(i_N^\gamma(n)).
        \end{cases}
\end{equation}
For $y,z\geq0$, put
\[
        H_y(z)
        :=
        \begin{cases}
        0,&z<y,\\
        1-e^{-(z-y)},&z\geq y.
        \end{cases}
\]

\begin{proposition}[Conditional exponential representation]
\label{prop:app-locking-representation}
Conditionally on $Y_N^\gamma(n)$, the fitnesses
$\{F_{N,x}^\gamma(n):x\in\Lambda_N\}$ are independent, and
$F_{N,x}^\gamma(n)$ has the exponential distribution conditioned to lie
above $Y_{N,x}^\gamma(n)$. Consequently, for distinct
$x_1,\ldots,x_k\in\Lambda_N$ and $z_1,\ldots,z_k\geq0$,
\[
        \mathbb{P}(F_{N,x_j}^\gamma(n)\leq z_j,\ 1\leq j\leq k)
        =
        \mathbb{E}\left[
        \prod_{j=1}^kH_{Y_{N,x_j}^\gamma(n)}(z_j)
        \right].
\]
\end{proposition}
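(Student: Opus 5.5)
We argue by induction on $n$, with a slightly stronger hypothesis: conditionally on the whole history $\mathcal G_n$ of the chain up to time $n$ (the sequence of minimum sites $i_N^\gamma(0),\ldots,i_N^\gamma(n-1)$, the minimum values $M_N^\gamma(0),\ldots,M_N^\gamma(n-1)$, and the information that the remaining fitnesses exceeded those minima), the fitnesses $F_{N,x}^\gamma(n)$ are independent. Each $F_{N,x}^\gamma(n)$ is distributed as $Y_{N,x}^\gamma(n)+\operatorname{Exp}(1)$. Since $Y_N^\gamma(n)$ is $\mathcal G_n$-measurable, conditioning down to $\sigma(Y_N^\gamma(n))$ preserves this product form. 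The distribution-function identity then follows by integrating the product of the conditional distribution functions $H_{Y_{N,x_j}^\gamma(n)}(z_j)$.

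The base case $n=0$ is immediate. All $Y_{N,x}^\gamma(0)=0$ and the initial fitnesses are i.i.d.\ $\operatorname{Exp}(1)$. Ties occur with probability zero here, so the deterministic tie rule plays no role.

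For the inductive step, suppose that conditionally on $\mathcal G_n$ the $F_x:=F_{N,x}^\gamma(n)$ are independent with $F_x=y_x+E_x$, where $y_x:=Y_{N,x}^\gamma(n)$. We reveal the argmin $i$ and the minimum value $M=F_i$. On the event $\{i_N^\gamma(n)=i,\ M\in dm\}$, the conditional joint law of $(F_x)_{x\neq i}$ is the product law conditioned on $\{F_x>m\ \forall x\neq i\}$. Because the $F_x$ are independent, this conditioning factorizes: each $F_x$ is conditioned independently on $F_x>m$.

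The lack-of-memory property then computes each factor. An exponential shifted by $y_x$ and conditioned to exceed $m$ is $\max\{y_x,m\}+\operatorname{Exp}(1)$. Indeed, if $m\le y_x$ the conditioning is vacuous, since $F_x>y_x\ge m$ almost surely. If $m>y_x$, memorylessness at the level $m$ gives the claim.

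Next come the refreshed sites $x\in\mathcal U_{N,\gamma}(i)$, including $i$ itself. These receive fresh independent $\operatorname{Exp}(1)$ variables, independent of everything else, which matches $Y_{N,x}^\gamma(n+1)=0$. The remaining sites $x\notin\mathcal U_{N,\gamma}(i)$ keep their values, and by the previous paragraph these are conditionally independent, of the form $\max\{y_x,M\}+\operatorname{Exp}(1)$. This matches the recursion \eqref{eq:app-locking-recursion}, so the stronger hypothesis holds at $n+1$ with $\mathcal G_{n+1}=\mathcal G_n\vee\sigma(i_N^\gamma(n),M_N^\gamma(n))$.

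The main point needing care is the conditioning on the argmin together with the value of the minimum. I would do this by an explicit density computation. The joint law of $(i,M,(F_x)_{x\neq i})$ has density
\[
        f_i(m)\prod_{x\neq i}f_x(z_x)\mathbf 1_{\{z_x>m\}},
\]
where $f_x$ is the density of $F_x$. Dividing by its marginal in $(i,m)$ exhibits the product of truncated laws directly. One should also check that the marginal of the minimum is irrelevant to the factorization, and that $\mathcal U_{N,\gamma}(i)$ is determined by $i$ alone. Ties have probability zero from time $1$ onward, because all laws involved are continuous.
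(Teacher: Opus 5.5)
Your proposal is correct and follows the paper's argument: induction on $n$, conditioning on the location and value of the minimum, and the lack-of-memory property, which gives threshold $\max\{Y_{N,x}^\gamma(n),M_N^\gamma(n)\}$ at non-updated sites and fresh exponentials with threshold zero at updated sites. Your stronger hypothesis, conditioning on the history $\mathcal G_n$ and then projecting to $\sigma(Y_N^\gamma(n))$, spells out the tower-property step that the paper leaves implicit when it conditions on $Y_N^\gamma(n)$ alone.
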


\begin{proof}
The assertion holds at time $0$. Suppose it holds at time $n$, and
condition first on $Y_N^\gamma(n)$ and then on the minimum value and its
location. The non-minimal coordinates remain independent; by the
lack-of-memory property, the conditional law at a non-updated site $x$ is
exponential above
\[
        \max\{Y_{N,x}^\gamma(n),M_N^\gamma(n)\}.
\]
The updated sites receive fresh independent exponentials and have threshold
zero, exactly as in \eqref{eq:app-locking-recursion}.
\end{proof}

Write
\[
        G_N^\gamma(n,t)
        :=\mathbb{P}(Y_{N,0}^\gamma(n)\leq t);
\]
translation invariance makes the choice of the site $0$ immaterial.

\begin{proposition}[Subcritical locking thresholds]
\label{prop:app-locking-below}
For every $t<t_{\mathrm d}^\gamma$,
\[
        \lim_{N\to\infty}
        \limsup_{n\to\infty}G_N^\gamma(n,t)=0.
\]
\end{proposition}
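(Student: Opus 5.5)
The plan is to read the event $\{Y_{N,0}^\gamma(n)\leq t\}$ in terms of $t$-avalanches and then use a renewal–reward (size-biasing) argument, controlled by the range--duration results of Proposition~\ref{prop:app-range-duration}.

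\emph{Step 1: reduction to an avalanche event.} By \eqref{eq:app-locking-recursion}, $Y_{N,0}^\gamma(n)$ is the maximum of the minima $M_N^\gamma(m)$ over the times $m<n$ since the last refresh of site $0$, and it is $0$ if $0$ was refreshed at time $n-1$. Call time $m$ a \emph{$t$-start} if all fitnesses exceed $t$ at time $m$. The step from such a time is the forced update of a $t$-avalanche, and that avalanche ends at the next time all fitnesses exceed $t$ again. Every step inside a $t$-avalanche has minimum at most $t$, except its first step, whose minimum exceeds $t$. So $Y_{N,0}^\gamma(n)\leq t$ forces the following: either $0$ is refreshed during the $t$-avalanche in progress at time $n$ (or during the one just started), or no $t$-start has occurred since the last refresh of $0$. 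Starting from i.i.d.\ $\operatorname{Exp}(1)$ fitnesses, the block argument in the proof of Proposition~\ref{prop:app-ergodicity} gives a $t$-start in finite mean time. Hence the second alternative has probability tending to $0$ as $n\to\infty$, and it suffices to bound $\limsup_n\mathbb P\bigl(0\in\xi^{(n)}\bigr)$, where $\xi^{(n)}$ is the range of the $t$-avalanche straddling time $n$.

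\emph{Step 2: renewal–reward.} Successive $t$-avalanches started from $t$-starts are i.i.d.\ copies of $(\xi_N^\gamma(t),\eta_N^\gamma(t))$, up to a translation of the origin. They are i.i.d.\ by level regeneration (Lemma~\ref{lem:level-regeneration}), because the configuration above $t$ at the start does not affect the law. Their origin is the location of the current minimum. By translation invariance of the finite-volume chain, I would first average over the site: it is enough to bound $\limsup_n N^{-1}\mathbb E\bigl|\xi^{(n)}\bigr|$. Each cycle has finite mean length, with span one as in Proposition~\ref{prop:app-ergodicity}. The renewal–reward theorem then identifies the limit as the time average
\[
        \lim_{n\to\infty}\frac1N\mathbb E\bigl|\xi^{(n)}\bigr|
        =\frac{\mathbb E[\eta_N^\gamma(t)\,r_N^\gamma(t)]}{N\,D_N^\gamma(t)} .
\]
(If the origin averaging is awkward, the same ratio bounds $\mathbb P(0\in\xi^{(n)})$ by symmetrising over rotations of $\Lambda_N$.)

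\emph{Step 3: estimating the ratio.} Fix $M$ and split on $\{r_N^\gamma(t)\leq M\}$. Since $D_N^\gamma(t)\geq1$, the ratio is at most
\[
        \frac MN+\frac{\mathbb E\bigl[\eta_N^\gamma(t)\mathbf 1_{\{r_N^\gamma(t)>M\}}\bigr]}{D_N^\gamma(t)} .
\]
Use the coupling of Lemma~\ref{lem:app-finite-infinite}. On $\{r_\infty^\gamma(t)<N\}$ the durations agree, so for $N>M$,
\[
        \mathbb E\bigl[\eta_N^\gamma\mathbf 1_{\{r_N^\gamma>M\}}\bigr]
        \leq\mathbb E\bigl[\eta_\infty^\gamma\mathbf 1_{\{r_\infty^\gamma>M\}}\bigr]
        +\Bigl(D_N^\gamma(t)-\mathbb E\bigl[\eta_\infty^\gamma\mathbf 1_{\{r_\infty^\gamma<N\}}\bigr]\Bigr).
\]
For $t<t_{\mathrm d}^\gamma=t_{\mathrm{sus}}^\gamma$, \eqref{eq:app-duration-convergence} gives $D_N^\gamma(t)\to D_\infty^\gamma(t)<\infty$. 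By monotone convergence the bracket therefore tends to $0$, and $r_\infty^\gamma(t)<\infty$ almost surely. Letting $N\to\infty$ and then $M\to\infty$, with dominated convergence on the first term, yields the claim.

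\emph{Main obstacle.} The delicate step is Step 1–2: identifying exactly which event $\{Y\leq t\}$ corresponds to, and justifying the size-biased time average when the avalanche origin moves with the minimum. I would handle both by first establishing the inclusion of events pathwise, and then applying renewal–reward to the additive functional $\sum_x\mathbf 1\{x\in\xi^{(n)}\}$, using translation invariance only at the very end.
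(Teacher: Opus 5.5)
Your proposal is correct and follows essentially the paper's route: the sites with locking threshold at most $t$ lie in the range of the current $t$-avalanche, the delayed renewal--reward theorem (span one from one-step avalanches) bounds $\limsup_n G_N^\gamma(n,t)$ by $\mathbb E[\eta_N^\gamma(t)r_N^\gamma(t)]/(ND_N^\gamma(t))$, and the split on $\{r_N^\gamma(t)\leq M\}$ is closed with the finite--infinite coupling and $D_N^\gamma(t)\to D_\infty^\gamma(t)<\infty$. One wording slip: in Step 1 the second alternative should read ``no $t$-start has occurred before time $n$'', since as you phrased it (``since the last refresh of $0$'') it is implied by $\{Y_{N,0}^\gamma(n)\leq t\}$ itself and does not vanish; your finite-mean-time argument clearly uses the correct version.
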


\begin{proof}
Let
\[
        K_N^\gamma(n,t)
        :=
        \sum_{x\in\Lambda_N}
        \mathbf 1_{\{Y_{N,x}^\gamma(n)\leq t\}}.
\]
After an initial delayed cycle, let $T_j$ be the starting time of the
$j$-th $t$-avalanche and write $(r_j,\eta_j)$ for its range--duration
pair, so that $T_{j+1}-T_j=\eta_j$ and these pairs are iid. At the first
update of the avalanche the current minimum exceeds $t$, so every site
outside its eventual range has its locking threshold raised above $t$.
During the remaining updates the selected minima are at most $t$, and
only sites in the avalanche range are refreshed. Consequently,
\[
        K_N^\gamma(n,t)\leq r_j,
        \qquad T_j<n\leq T_{j+1}.
\]
The integer interval $(T_j,T_{j+1}]$ contains exactly $\eta_j$ times, so
its total reward is at most $\eta_jr_j$. For all sufficiently large
$N$, a one-step nonspanning avalanche has probability
$e^{-d_\gamma t}>0$, and the cycle length therefore has span one. The
delayed renewal-reward theorem and translation invariance give
\[
        \limsup_{n\to\infty}G_N^\gamma(n,t)
        \leq
        \frac{\mathbb{E}[\eta_N^\gamma(t)r_N^\gamma(t)]}
        {ND_N^\gamma(t)}.
\]
For $K\geq1$, splitting according to $r_N^\gamma(t)\leq K$ yields
\begin{equation}
\label{eq:app-subcritical-split}
        \limsup_{n\to\infty}G_N^\gamma(n,t)
        \leq
        \frac KN+
        \frac{
        \mathbb{E}[\eta_N^\gamma(t)
        \mathbf 1_{\{r_N^\gamma(t)>K\}}]}
        {D_N^\gamma(t)}.
\end{equation}
Set
\[
\begin{aligned}
        e_N^\gamma(t,k)
        &:=
        \mathbb{E}[\eta_N^\gamma(t)
        \mathbf 1_{\{r_N^\gamma(t)=k\}}],\\
        e_\infty^\gamma(t,k)
        &:=
        \mathbb{E}[\eta_\infty^\gamma(t)
        \mathbf 1_{\{r_\infty^\gamma(t)=k\}}].
\end{aligned}
\]
By Lemma~\ref{lem:app-finite-infinite},
$e_N^\gamma(t,k)=e_\infty^\gamma(t,k)$ for $k<N$. Thus, for fixed
$K<N$,
\[
        \mathbb{E}[\eta_N^\gamma(t)
        \mathbf 1_{\{r_N^\gamma(t)>K\}}]
        =
        D_N^\gamma(t)
        -\sum_{k=1}^Ke_\infty^\gamma(t,k).
\]
Since $t<t_{\mathrm d}^\gamma=t_{\mathrm{sus}}^\gamma$,
Proposition~\ref{prop:app-range-duration} gives
$D_N^\gamma(t)\to D_\infty^\gamma(t)<\infty$. The last display therefore
converges to $\sum_{k>K}e_\infty^\gamma(t,k)$, which tends to zero as
$K\to\infty$. Let first $N\to\infty$ in
\eqref{eq:app-subcritical-split} and then $K\to\infty$.
\end{proof}

\begin{proposition}[Supercritical locking thresholds]
\label{prop:app-locking-above}
For every $c>t_\infty^\gamma$,
\[
        \lim_{N\to\infty}
        \liminf_{n\to\infty}G_N^\gamma(n,c)=1.
\]
\end{proposition}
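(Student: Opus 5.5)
Fix $c>t_\infty^\gamma$ and choose $t\in(t_\infty^\gamma,c)$, so that $p:=P_\infty^\gamma(t)>0$. By Lemma~\ref{lem:app-finite-infinite}, $P_N^\gamma(t)=\mathbb P(r_\infty^\gamma(t)\geq N)\geq p$ for every $N$. In words, a finite-volume $t$-avalanche spans the whole torus with probability bounded below uniformly in $N$. The plan is to show the following: once the chain is stationary-like, the current time lies with high probability inside a spanning $t$-avalanche. During such an avalanche the locking thresholds at most sites are at most $t$, hence at most $c$.

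\textbf{Step 1: locking thresholds during an avalanche.} By \eqref{eq:app-locking-recursion}, a site $x$ has $Y_{N,x}^\gamma(n)\leq c$ as soon as it was refreshed at some time $m<n$ such that every minimum $M_N^\gamma(m')$ with $m\leq m'<n$ is at most $c$. Inside a $t$-avalanche all selected minima after the first update are at most $t<c$. Hence every site refreshed so far in the current $t$-avalanche has threshold at most $t$.

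\textbf{Step 2: renewal-reward lower bound.} Decompose time into successive $t$-avalanches, as in Proposition~\ref{prop:app-locking-below}. Within the $j$-th avalanche, the number of sites with threshold at most $c$ at step $T_j+i$ is at least the size $\kappa_j(i)$ of the range explored after $i$ updates. Translation invariance and the renewal-reward theorem then give
\[
\liminf_n G_N^\gamma(n,c)\geq \frac{\mathbb E\bigl[\sum_{i<\eta_N}\kappa(i)\bigr]}{N D_N^\gamma(t)}.
\]
It remains to show that this ratio tends to $1$. The key point is that $D_N^\gamma(t)\to\infty$ while the dominant contribution to $D_N^\gamma(t)$ comes from spanning avalanches. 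One shows that on $\{r_N=N\}$, most of the duration $\eta_N$ occurs after the range has already covered all but $o(N)$ sites. The mechanism is that covering the torus costs $O(N\cdot\text{poly})$ updates only at subcritical scales, whereas a spanning supercritical avalanche keeps running for time growing much faster: by \eqref{eq:app-finite-duration}, $D_N^\gamma(t)\geq \exp\{\int_{t_\infty}^t R_N\}\geq e^{(t-t_\infty)pN}$.

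\textbf{Main obstacle.} The hardest part is to make this last comparison quantitative. One must bound the expected time spent before the range reaches $(1-\varepsilon)N$ by $o(D_N^\gamma(t))$. I would first try to decompose through the graphical representation. The duration accrued while the range is below $(1-\varepsilon)N$ is controlled by the same compensation formula with $R_N$ replaced by the truncated range. Combined with the exponential lower bound, this gives a ratio of order $e^{-\varepsilon(t-t_\infty)pN}\to0$. A cleaner alternative is to apply the lower bound at a level slightly below $t$, where spanning has already occurred: choose $t'\in(t_\infty^\gamma,t)$ and use regeneration at level $t'$. At level $t'$ the avalanche has spanned with probability at least $p$, and thereafter all thresholds are at most $t$. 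The expected remaining duration dominates $D_N(t')$ by the factor $\exp\{\int_{t'}^t R_N\}\to\infty$.
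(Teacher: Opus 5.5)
Your first route can be completed and differs from the paper's, but as written its decisive step is only asserted, one estimate is wrong, and the ``cleaner alternative'' does not close the argument.

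\textbf{The estimate.} The bound $\int_{t_\infty^\gamma}^tR_N^\gamma\geq(t-t_\infty^\gamma)pN$ is unjustified. Since $P_\infty^\gamma$ is non-decreasing, $P_\infty^\gamma(s)\leq p$ on $(t_\infty^\gamma,t)$. What you do have is $R_N^\gamma(s)\geq NP_N^\gamma(s)\geq NP_\infty^\gamma(s)$, hence $\int_0^tP_N^\gamma\geq\beta:=\int_{t_\infty^\gamma}^tP_\infty^\gamma(s)\,ds>0$, and this suffices.

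\textbf{The key step.} The ``same compensation formula with truncated range'' is not the range--duration identity of Proposition~\ref{prop:app-range-duration}. It needs the nested time structure of avalanches, which you never state. In physical time, a $t$-avalanche consists of its forced update followed by contiguous blocks, one per reachable mark at levels $s_1<s_2<\cdots\leq t$. Given the past, the $j$-th block is an independent $s_j$-avalanche whose own explored range is contained, after translation, in the global one. The block contributes no ``bad'' time if $|\xi_N^\gamma(s_j-)|\geq(1-\varepsilon)N$. Decorate marks with full genealogies, as in the proof of Proposition~\ref{prop:app-range-duration}. Then the expected number $f(t)$ of updates at which fewer than $(1-\varepsilon)N$ sites have been explored satisfies
\[
f(t)\leq1+\int_0^t\mathbb E\bigl[|\xi_N^\gamma(u)|\wedge(1-\varepsilon)N\bigr]f(u)\,du .
\]
By Gr\"onwall and $D_N^\gamma(t)=\exp\{\int_0^tR_N^\gamma\}$, this gives $f(t)/D_N^\gamma(t)\leq\exp\{-\int_0^t\mathbb E[(|\xi_N^\gamma(u)|-(1-\varepsilon)N)^+]\,du\}\leq e^{-\varepsilon\beta N}$. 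Your renewal-reward ratio is then at least $(1-\varepsilon)(1-e^{-\varepsilon\beta N})$. To finish, let $N\to\infty$ and then $\varepsilon\downarrow0$. You also need the span-one remark (one-step avalanches have probability $e^{-d_\gamma t}>0$) to pass from Ces\`aro averages to $\liminf_n$.

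\textbf{The alternative.} Regeneration at level $t'$ only controls the event $\{r_N^\gamma(t')=N\}$. On its complement, the remaining duration $\eta_N^\gamma(t)-\eta_N^\gamma(t')$ is not shown to be negligible, and the range need not be full during it. By itself the argument therefore gives only $\liminf_nG_N^\gamma(n,c)\gtrsim P_N^\gamma(t')$, which is bounded away from $1$. Improving this means controlling the time before coverage again, which returns you to the first route.

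\textbf{Comparison with the paper.} The paper never analyses the internal time profile of an avalanche. It uses longer iid cycles. First come $t$-avalanches until the first spanning one (phase $A_N$, with $\mathbb E[A_N]=D_N^\gamma(t)/P_N^\gamma(t)$ by Wald). Then comes a recovery phase $B_N$ until all fitnesses exceed $c$, during which \emph{every} threshold is at most $c$. During recovery all $N$ vertical lines are active. With probability $1-e^{-(c-t')}$, each carries a mark in $(t',c]$ whose attached duration has mean at least $D_N^\gamma(t')\geq D_N^\gamma(t)$. This gives
\[
1-\liminf_nG_N^\gamma(n,c)\leq\bigl(N(1-e^{-(c-t')})P_N^\gamma(t)\bigr)^{-1}.
\]
That argument needs only $P_N^\gamma(t)\geq P_\infty^\gamma(t)>0$ and monotonicity of $D_N^\gamma$. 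It does not use exponential growth of $D_N^\gamma$ or genealogical decorations. Your route stays within $t$-avalanche cycles and additionally shows that the coverage phase of an avalanche is negligible in duration. The price is the nested-block compensation argument above.
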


\begin{proof}
Fix $t_\infty^\gamma<t<t'<c$. Start at a regeneration time at
which the fitnesses are independent with law
$c+\operatorname{Exp}(1)$. Run the chain until the end of the first
spanning $t$-avalanche and denote this duration by $A_N$; then continue
until all fitnesses again exceed $c$ and denote the recovery duration by
$B_N$. 
At the end of the recovery phase all fitnesses exceed \(c\).
By the same level-regeneration argument as in Lemma~\ref{prop:app-ergodicity},
their conditional law is
\[
        (c+\operatorname{Exp}(1))^{\otimes\Lambda_N},
\]
independently of the history before that terminal time. Thus successive
avalanche--recovery periods are iid cycles. Their length has span one by
the same one-step-delay argument as in Proposition~\ref{prop:app-ergodicity}.


At the end of the spanning $t$-avalanche every site has been updated, so
all locking thresholds are at most $t$. During the recovery all selected
minima are below $c$, hence all locking thresholds remain at most $c$.
The renewal-reward theorem gives
\begin{equation}
\label{eq:app-recovery-fraction}
        \liminf_{n\to\infty}G_N^\gamma(n,c)
        \geq
        \frac{\mathbb{E}[B_N]}{\mathbb{E}[A_N]+\mathbb{E}[B_N]}.
\end{equation}
The chain runs through iid $t$-avalanches until the first spanning one, so
Wald's identity yields
\begin{equation}
\label{eq:app-avalanche-phase}
        \mathbb{E}[A_N]
        =
        \frac{D_N^\gamma(t)}{P_N^\gamma(t)}.
\end{equation}
At the start of the recovery phase the active set is all of $\Lambda_N$.
In the graphical construction, for each $x\in\Lambda_N$ let $\tau_x$ be
the first mark of $\Pi_{N,x}^\gamma$ above $t'$. Whenever
$\tau_x\leq c$, this mark is reachable by the vertical path from $(x,t)$,
and its attached duration is one of the non-negative contributions to the
recovery duration $B_N$. Conditionally on $\tau_x$, its mean is
$D_N^\gamma(\tau_x)\geq D_N^\gamma(t')$. Therefore
\[
        \mathbb{E}[B_N]
        \geq
        N(1-e^{-(c-t')})D_N^\gamma(t').
\]
Combining this estimate with
\eqref{eq:app-recovery-fraction}--\eqref{eq:app-avalanche-phase} and using
$D_N^\gamma(t')\geq D_N^\gamma(t)$ gives
\[
        1-\liminf_{n\to\infty}G_N^\gamma(n,c)
        \leq
        \frac{1}
        {N(1-e^{-(c-t')})P_N^\gamma(t)}.
\]
By Lemma~\ref{lem:app-finite-infinite},
$P_N^\gamma(t)\geq P_\infty^\gamma(t)>0$, so the right-hand side tends
to zero.
\end{proof}

\begin{proposition}[Stationary product limit]
\label{prop:app-stationary-limit}
If
\[
        t_{\mathrm d}^\gamma
        =t_{\mathrm{sus}}^\gamma
        =t_\infty^\gamma
        =:t_c^\gamma,
\]
then, for every fixed collection of distinct sites
$x_1,\ldots,x_k\in\Z$, the corresponding marginal of $\pi_N^\gamma$
converges to $\nu_{t_c^\gamma}^{\otimes k}$.
\end{proposition}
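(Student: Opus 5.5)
The plan is to sandwich the finite-dimensional marginals using the conditional exponential representation of Proposition~\ref{prop:app-locking-representation}, together with the two locking-threshold estimates. Stationarity lets us replace $\pi_N^\gamma$ by the law at time $n$ started from independent $\operatorname{Exp}(1)$ fitnesses. By Proposition~\ref{prop:app-ergodicity}, $\mathcal L(F_N^\gamma(n))\to\pi_N^\gamma$ in total variation as $n\to\infty$. Hence for fixed $z_1,\dots,z_k$,
\[
\pi_N^\gamma(F_{x_j}\le z_j,\ j\le k)=\lim_{n\to\infty}\mathbb E\Bigl[\prod_{j=1}^kH_{Y_{N,x_j}^\gamma(n)}(z_j)\Bigr].
\]
It therefore suffices to show that, for each $j$, the threshold $Y_{N,x_j}^\gamma(n)$ concentrates near $t_c^\gamma$ in the iterated limit $\lim_N\limsup_n$.

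Fix $\varepsilon>0$ and set $t=t_c^\gamma-\varepsilon$, $c=t_c^\gamma+\varepsilon$ (with $t$ omitted if $t_c^\gamma-\varepsilon<0$). The hypothesis gives $t<t_{\mathrm d}^\gamma$ and $c>t_\infty^\gamma$. Proposition~\ref{prop:app-locking-below} then gives $\lim_N\limsup_n\mathbb P(Y_{N,x}^\gamma(n)\le t)=0$. Proposition~\ref{prop:app-locking-above} gives $\lim_N\limsup_n\mathbb P(Y_{N,x}^\gamma(n)>c)=0$. Both hold at every site by translation invariance. A union bound over the $k$ sites shows that the event $E_{N,n}$, on which every $Y_{N,x_j}^\gamma(n)$ lies in $(t,c]$, has probability tending to one in the iterated limit.

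On $E_{N,n}$ we use two facts about $y\mapsto H_y(z)$. It is non-increasing in $y$, and $|H_y(z)-H_{y'}(z)|\le|y-y'|$ since $z\mapsto 1-e^{-z}$ is $1$-Lipschitz and $H_y(z)=G(z-y)$ with $G(u)=(1-e^{-u})\mathbf 1_{u\ge0}$. It follows that each factor differs from $H_{t_c^\gamma}(z_j)$ by at most $\varepsilon$. Products of $k$ numbers in $[0,1]$ are $1$-Lipschitz in each coordinate, so
\[
\Bigl|\mathbb E\prod_jH_{Y_{N,x_j}^\gamma(n)}(z_j)-\prod_jH_{t_c^\gamma}(z_j)\Bigr|\le k\varepsilon+\mathbb P(E_{N,n}^c).
\]
We take $\limsup_n$, then $N\to\infty$, then $\varepsilon\downarrow0$. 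This gives convergence of the joint distribution functions at every point to those of $\nu_{t_c^\gamma}^{\otimes k}$, since $H_{t_c^\gamma}(z)$ is exactly the distribution function of $\nu_{t_c^\gamma}$. The limit is continuous, so this is convergence in distribution.

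The only delicate point is the order of limits. The total-variation limit in $n$ must be taken first, at fixed $N$, and only afterwards may the locking estimates in $N$ be applied. This ordering is consistent with the $\lim_N\limsup_n$ form of Propositions~\ref{prop:app-locking-below} and~\ref{prop:app-locking-above}, so no uniformity in $N$ is needed. The sites $x_j$ are distinct in $\Lambda_N$ once $N$ exceeds their spread, which is what makes the representation applicable.
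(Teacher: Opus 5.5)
Your proof is correct and follows the paper's argument. The locking thresholds $Y_{N,x_j}^\gamma(n)$ concentrate near $t_c^\gamma$ by the two locking propositions and a union bound, the conditional exponential representation converts this into convergence of the joint distribution functions, and the limits are taken in the same order: $n\to\infty$ at fixed $N$ via ergodicity, then $N\to\infty$, then $\varepsilon\downarrow0$. The only cosmetic difference is that you use the explicit $1$-Lipschitz bound $|H_y(z)-H_{y'}(z)|\le|y-y'|$ where the paper uses an abstract modulus of continuity $\omega_\delta$.
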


\begin{proof}
Fix $0<\delta<t_c^\gamma$. For all sufficiently large $N$, the sites
$x_1,\ldots,x_k$ are distinct modulo $N$. By translation invariance, the
union bound, and Propositions~\ref{prop:app-locking-below}
and~\ref{prop:app-locking-above},
\begin{align*}
&\lim_{N\to\infty}\limsup_{n\to\infty}
\mathbb{P}\left(
\max_{1\leq j\leq k}
|Y_{N,x_j}^\gamma(n)-t_c^\gamma|>\delta
\right)\\
&\quad\leq
k\lim_{N\to\infty}\limsup_{n\to\infty}
G_N^\gamma(n,t_c^\gamma-\delta)\\
&\qquad+
k\lim_{N\to\infty}\limsup_{n\to\infty}
\bigl(1-G_N^\gamma(n,t_c^\gamma+\delta)\bigr)=0.
\end{align*}
For $z_1,\ldots,z_k\geq0$, define
\[
        \Phi(y_1,\ldots,y_k)
        :=
        \prod_{j=1}^kH_{y_j}(z_j).
\]
This function is bounded and continuous. By
Proposition~\ref{prop:app-locking-representation},
\[
        \mathbb{P}(F_{N,x_j}^\gamma(n)\leq z_j,\ 1\leq j\leq k)
        =
        \mathbb{E}[
        \Phi(Y_{N,x_1}^\gamma(n),\ldots,Y_{N,x_k}^\gamma(n))
        ].
\]
If
\[
        \omega_\delta
        :=
        \sup_{\max_j|y_j-t_c^\gamma|\leq\delta}
        |
        \Phi(y_1,\ldots,y_k)
        -\Phi(t_c^\gamma,\ldots,t_c^\gamma)
        |,
\]
then $\omega_\delta\to0$ as $\delta\downarrow0$, and the absolute
difference between the preceding probability and
$\prod_{j=1}^kH_{t_c^\gamma}(z_j)$ is at most
\[
        \omega_\delta+
        \mathbb{P}\left(
        \max_j|Y_{N,x_j}^\gamma(n)-t_c^\gamma|>\delta
        \right).
\]
For fixed $N$, Proposition~\ref{prop:app-ergodicity} allows us to let
$n\to\infty$ and replace the law of $F_N^\gamma(n)$ by
$\pi_N^\gamma$. Letting next $N\to\infty$ and finally
$\delta\downarrow0$ proves convergence of the joint distribution
functions to
\[
        \prod_{j=1}^kH_{t_c^\gamma}(z_j),
\]
the distribution function of $\nu_{t_c^\gamma}^{\otimes k}$.
\end{proof}

\end{document}